\newtheoremstyle{nodot}
{3pt}
{3pt}
{\itshape}
{}
{\bfseries} 
{}
{0pt}
{\thmnote{#3}}
\theoremstyle{plain}
\newtheorem{theorem}{Theorem}[section]
\newtheorem{corollary}[theorem]{Corollary}
\newtheorem{proposition}[theorem]{Proposition}
\theoremstyle{definition}
\newtheorem{definitions}[theorem]{Definitions}
\newtheorem{example}[theorem]{Example}
\newtheorem{notation}[theorem]{Notation}
\newtheorem{remarks}[theorem]{Remarks}
\newtheorem{hypotheses}[theorem]{Hypotheses}
\newcommand{\gen}[1]{\langle#1\rangle}
\newcommand{\series}[1]{\langle\langle#1\rangle\rangle}
\def \naturals{\mathbb{N}}
\def \integers {\mathbb{Z}}
\def \rationals{\mathbb{Q}}
\def\d1{\discretionary{-}{}{-}}
\def\coloneq{\mathrel{\mathop\mathchar"303A}\mkern-1.2mu=}
\renewcommand{\le}{\leqslant}
\renewcommand{\ge}{\geqslant}
\begin{document}

\pagestyle{myheadings}
\markboth{Ring coproducts embedded in power-series rings}{Pere Ara and Warren Dicks}
\title{Ring coproducts embedded in power-series rings}

\author{Pere Ara  and Warren Dicks}

\date{\today}

\maketitle

\begin{abstract} Let $R$ be a ring (associative, with $1$),
and let $R\series{a,b}$ denote the power\d1series $R$-ring in two
non-commuting, $R$-centralizing variables, $a$ and $b$.
Let $A$~be an $R$-subring of $ R\series{a}$ and $B$ be an
$R$-subring of $R\series{b}$, and let $\alpha$~denote the natural
map \mbox{$A \amalg_R B \to R\series{a,b}$}.
This article  describes some situations where $\alpha$ is injective and some where it is not.

We prove that if $A$ is a right Ore localization of $R[a]$ and $B$
is a right Ore localization of $R[b]$, then $\alpha$ is injective.
For example, the group ring over $R$ of the free group on
$\{1{+}\,a,1{+}\,b\}$ is
\mbox{$R[ (1{+}\,a)^{\pm 1}] \amalg_R R[ (1{+}\,b)^{\pm 1}]$,}
which then embeds in $R\series{a,b}$.
We thus recover a celebrated result of \mbox{R.\,H.\,\,Fox},
via a proof simpler than those previously known.

We show that $\alpha$ is injective if $R$ is
\textit{$\Pi$-semihereditary,} that is,  every finitely generated,
torsionless, right $R$-module is projective.
 (This concept was first
studied by \mbox{M.\,F.\,\,Jones}, who showed that it is left-right
symmetric. It follows from a result of
\mbox{I.\,I.\,\,Sahaev} that
if  \,$\text{\normalfont{w.gl.dim\,}}R \le 1$ and $R$  embeds in a skew field,
then $R$~is $\Pi$-semihereditary.
Also, it follows from a result of
\mbox{V.\,C.\,\,Cateforis}
that if  $R$ is right semihereditary and  right self-injective,
then $R$ is $\Pi$-semihereditary.)

The arguments and results extend easily from two variables to any
set of variables.

The article concludes with some results contributed by
 G.\,M.\,\,Berg\-man that describe situations
where  $\alpha$ is not injective.
He shows that if $R$ is commutative and
\mbox{$\text{w.gl.dim\,} R \ge 2$,}
then there exist examples where the map
\mbox{$\alpha' \colon A \amalg_R B \to R\series{a}\amalg_R R\series{b}$}
is not injective, and hence neither is $\alpha$.  It follows from a result of  K.\,R.\,\,Goodearl  that
when $R$ is a commutative, countable, non-self\d1in\-jec\-tive, von
Neumann regular ring, the map
\mbox{$\alpha''\colon R\series{a}\amalg_R R\series{b}
 \to  R\series{a,b}$} is not injective.
\mbox{Bergman}  gives  procedures for constructing
other examples where $\alpha''$ is not injective.

\medskip
\medskip

{\footnotesize
\noindent \emph{2010 Mathematics Subject Classification.}
Primary: 16S10; Secondary: \!20C07, 20E05.

\noindent \emph{Key words.} Ring coproduct, free-group group ring,
power series, Ore localization.}
\end{abstract}

\section{Summary of the results}\label{sec:res}

Some of the terms that we use in this section will be explained
in more detail in subsequent sections.

Throughout, we fix a ring $R$ (associative, with 1).
By an \textit{$R$-ring} $S$ we mean a ring $S$ together with
a specified ring homomorphism $R \to S$.
We write $\text{U}(R)$ for the multiplicative group of units of $R$.
We shall be considering the following.

\begin{hypotheses}\label{hyp:main}
Let $R\series{a,b}$ denote the power-series $R$-ring
in two non-com\-muting, $R$-centralizing variables, $a$~and~$b$.
Let $A$ be an $R$-subring of $ R\series{a}$  and $B$~be an $R$-subring
of $R\series{b}$.  Let $\alpha$ denote the natural map
\mbox{$A \amalg_R B \to R\series{a,b}$}.
\end{hypotheses}

\medskip

This article describes some situations where $\alpha$ is injective and some
 where  it is not.

\medskip

In Section~\ref{sec:not}, we fix some of the notation, and we
use a result of P.\,M.\,\,Cohn to decompose the map from
the domain of $\alpha$ to the image of $\alpha$
as a direct sum of $R$-bimodule maps.

\medskip

In Section~\ref{sec:flat}, in Proposition~\ref{prop:loc}, we shall see
that if $A$ is a right Ore localization of $R[a]$ and $B$ is a
right Ore localization of $R[b]$, then $\alpha$ is injective.

\medskip

\begin{example}\label{ex:Fox}
Let \mbox{$h\coloneq 1{+}\,\,a$}, \mbox{$H \coloneq \gen{h}\le\text{U}(R\series{a})$},
and let $RH$ denote the group ring of $H$ over $R$.
It is not difficult to see that we may view $R[a]$ as
\mbox{$R[h] \subseteq RH \subseteq R\series{a}$} and that $RH$ is
a central Ore localization of~\mbox{$R[h]$}.
The analogous statements hold for \mbox{$k \coloneq 1{+}\,\,b$} and
\mbox{$K \coloneq \gen{k} \le \text{U}(R\series{b})$}.
By Proposition~\ref{prop:loc} below, the map
\mbox{$\alpha \colon RH \amalg_R RK \to R \series{a,b}$} is injective.
Here, \mbox{$RH \amalg_R RK $} may be identified with the group ring  over
$R$ of the  group~$F$ freely generated by  two symbols   $1{+}\,\,a, 1{+}\,\,b$.
We thus recover
the celebrated result given in 1953 by R.\,H.\,\,Fox~\cite{Fox53}, that
 $RF$   embeds in $R\series{a,b}$.
Although Fox considers only the case $R = \integers$, his argument works for any ring.
The proof given here is simpler
than the previously known proofs, such as the original proof reviewed recently in \mbox{\cite[pp.\,366--371]{AD}},
and the proof by G.\,M.\,\,Bergman~\cite[pp.\,528--529]{Cohn85} of the case where $R$ is a field.

(One consequence of Fox's result is that  if $R \ne \{0\}$, then the
 group $F$ embeds in $\text{U}(R\series{a,b})$.
\mbox{W.\,Magnus~\cite{Magnus35}}  had already given a short proof of the
$R = \integers$ case of this consequence in 1935,  and deduced much useful information about free groups.
Some of this information eventually led  to the discovery, in 1946, of the important fact
that $F$ is an orderable group.
Much later, Bergman~\cite{Bergman90}  remarked that orderability follows directly
from Magnus' embedding, since
$\integers\series{a,b}$ can easily be given a ring ordering whose positive cone contains
$F$.)
\end{example}

In Section~\ref{sec:back}, we review some definitions and results from
ring theory.  We say that $R$ is \textit{$\Pi$-semihereditary} if
every finitely generated, right $R$-submodule of a direct product of copies of $R$
is projective.  (Completely reducible rings are
clearly $\Pi$-semihereditary, since all their modules are projective.
It is also clear that $\Pi$-semihereditary rings are right semihereditary and, hence,
satisfy \mbox{$\text{w.gl.dim\,} R \le 1$}.)
This concept was studied first by
M.\,F.\,\,Jones~\cite{Jones} who
showed that it is left-right symmetric.
Earlier,  K.\,R.\,\,Goodearl~\cite{Goodearl72} had observed that
 a result of
V.\,C.\,Cate\-foris~\cite{Cateforis} shows
that if $R$ is  right semihereditary and right self-injective,
then $R$ is   $\Pi$-semihereditary; see Theorem~\ref{thm:Cateforis} below.
It follows from a result of
I.\,I.\,\,Sahaev~\cite{Sahaev}
that if  \mbox{$\text{w.gl.dim\,} R \le 1$} and $R$ embeds in a skew
field, then $R$ is $\Pi$-semihereditary;
see Corollary~\ref{cor:JondrupMain}\eqref{it:J1}
 below.
We recall these and other examples of $\Pi$-semihereditary rings.

\medskip

 In Section~\ref{sec:PID}, we write $\alpha$ as a composite
\mbox{$A \amalg_R B \stackrel{\alpha'}{\to}
R\series{a}\amalg_R R\series{b} \stackrel{\alpha''}{\to} R\series{a,b}.$}
Then, in Corollary~\ref{cor:suff0},
we show that $\alpha'$, $\alpha''$, and $\alpha$ are injective if
$R$ is $\Pi$-semi\-hered\-i\-tary, or, more generally,
if every finitely generated right $R$-submodule
of $R\series{a}$ is projective.
The examples of the preceding section then apply.

\medskip

The results of Sections~\ref{sec:flat} and~\ref{sec:PID} extend from
two variables to any set of variables; the arguments are very similar
but the notation is more complicated.
We leave the details to the interested reader.

\medskip

Section~\ref{sec:ex} contains   results contributed by
Berg\-man that describe situations
where  $\alpha$ is not injective.
Examples of non-injectivity of $\alpha$ can arise from
non-injectivity either of $\alpha'$ or of $\alpha''$.
(Examples of the former sort clearly lead to non-injectivity
of~$\alpha$, while an example of the latter sort does
so on taking \mbox{$A=R\series{a}$}, \mbox{$B=R\series{b}$}.)
Examples are given of both sorts.
He obtains non-injectivity of $\alpha'$ for any
commutative ring $R$ with \mbox{$\text{w.gl.dim\,} R \ge 2$}, e.g.
  \mbox{$R=\rationals[x,y]$}.
A result of \mbox{Goodearl~\cite{Goodearl72}}
yields non-injectivity of $\alpha''$
whenever $R$ is a commutative, countable, non-self\d1in\-jec\-tive, von
Neumann regular ring,  e.g.
 \mbox{$R=\rationals[e_i : i \in \integers]/(e_i e_j - \delta_{i,j} e_i : i,j \in \integers).$}
Constructions are described that generalize  the latter example.

\section{The bimodule structure of ring coproducts}\label{sec:not}

\begin{notation}
We denote by $\naturals$ the set of finite cardinals,
$\{0,1,2,3,\ldots\}$.

For any $R$-bimodules $M_1$ and $M_2$, we will denote the $R$-bimodule
$M_1 \otimes _R M_2$ by $M_1 \otimes M_2$ if $R$ is understood.

For any $R$-bimodule $M$ and any $n \in \naturals$, we recursively
define the $R$-bimodule \textit{tensor power} $M^{\otimes n}$ by
the formulas $M^{\otimes 0}:= R$ and
$M^{\otimes (n{+}1)}:= M^{\otimes n} \otimes M$.
\end{notation}

\begin{notation}\label{not:2} Suppose that Hypotheses~\ref{hyp:main} hold.

Let $\{a,b\}^\ast$ denote the free monoid on $\{a,b\}$.
We shall usually write elements of $R\series{a,b}$ as
formal sums \hskip-10pt$\sum\limits_{w \in \{a,b\}^*} \hskip-7pt f(w){\cdot}w,$
where $f\colon w\mapsto f(w)$ is an arbitrary function from $\{a,b\}^\ast$
to $R$.
The set $\{w \in \{a,b\}^*  \colon f(w) \ne 0\}$ is called the
\textit{$\{a,b\}^*$-support} of
\hskip-10pt$\sum\limits_{w \in \{a,b\}^*}\hskip-7pt f(w){\cdot}w.$

We view $R \subseteq R[a] \subseteq R\series{a} \subseteq R\series{a,b}$
and \mbox{$R \subseteq R[b] \subseteq R\series{b} \subseteq R\series{a,b}$}.
Let  \mbox{$\mathfrak{a} \coloneq A \cap aR\series{a}$} and
$\mathfrak{b} \coloneq B \cap bR\series{b}$.
Then $\mathfrak{a}$ is a two\d1sided ideal of $A$, and,
as $R$-bimodules, $A = R \oplus \mathfrak{a}$.
Analogous statements hold for $B$.

The ring coproduct of $A$ and $B$ amalgamating the two copies of $R$
will be denoted $A \amalg_R B$, or by $A \amalg B$ if $R$ is understood.
Now $A \amalg B = (R\oplus \mathfrak{a}) \amalg (R\oplus \mathfrak{b})$.
As noted by P.\,M.\,\,Cohn \cite[pp.\,60--61]{Cohn64},
it is not difficult to show that
there is then an expansion as a direct sum of $R$-bimodules
$$ A \amalg B = R \oplus \mathfrak{a} \oplus \mathfrak{b}
\oplus (\mathfrak{a}{\otimes} \mathfrak{b})
\oplus (\mathfrak{b} {\otimes} \mathfrak{a})
\oplus (\mathfrak{a}{\otimes} \mathfrak{b} {\otimes} \mathfrak{a})
\oplus (\mathfrak{b}{\otimes} \mathfrak{a} {\otimes} \mathfrak{b})
\oplus \cdots.$$
We may write this in the form
\begin{equation}\label{eq:sum0}
A \amalg B = R \oplus \mathfrak{a} \oplus \mathfrak{b}
\oplus{\textstyle \bigoplus \limits_{n\ge 1}}
\Bigl((\mathfrak{a} {\otimes} \mathfrak{b})^{{\otimes} n}
\oplus (\mathfrak{b} {\otimes} \mathfrak{a})^{{\otimes} n}
\oplus ((\mathfrak{a} {\otimes} \mathfrak{b})^{{\otimes} n}
{\otimes} \mathfrak{a})
\oplus ((\mathfrak{b} {\otimes} \mathfrak{a})^{{\otimes} n}
{\otimes} \mathfrak{b})\Bigr).
\end{equation}

When $\alpha$ is applied to~\eqref{eq:sum0}, the direct sums are
replaced with sums, and the tensor-product multiplications within
$A \amalg B $ are replaced with the
multiplications of $R\series{a,b}$.
Thus, we may write
\begin{equation}\label{eq:sum1}
\alpha(A \amalg B) =
R + \mathfrak{a} + \mathfrak{b} +
{\textstyle\sum \limits_{n\ge 1}}\Bigl((\mathfrak{a}\mathfrak{b})^{ n}
+ (\mathfrak{b} \mathfrak{a})^{ n}
+((\mathfrak{a} \mathfrak{b})^{ n} \mathfrak{a})
+ ( (\mathfrak{b} \mathfrak{a})^{ n} \mathfrak{b})\Bigr).\vspace{-2mm}
\end{equation}
The $\{a,b\}^\ast$-support of $\mathfrak{a}$ lies in
$\{a^i : i \ge 1\}$, and the
$\{a,b\}^\ast$-support of $\mathfrak{b}$ lies in $\{b^j : j \ge 1\}$.
It follows that any two summands appearing in~\eqref{eq:sum1}
have disjoint $\{a,b\}^\ast$-supports.
Hence, we recover directness in the summation, and may write
\begin{equation*}
\alpha(A \amalg B) =
R \oplus \mathfrak{a} \oplus \mathfrak{b}
\oplus {\textstyle\bigoplus \limits_{n\ge 1}}
\Bigl((\mathfrak{a}\mathfrak{b})^{n}
\oplus (\mathfrak{b} \mathfrak{a})^{n}
\oplus((\mathfrak{a} \mathfrak{b})^{n} \mathfrak{a})
\oplus ((\mathfrak{b}\mathfrak{a})^{n} \mathfrak{b})\Bigr).\vspace{-2mm}
\end{equation*}

For any finite alternating string $w$   formed from the
letters $a$ and $b$, let us write $\alpha_w$ for the map
from the corresponding alternating tensor product of factors
$\mathfrak{a}$~and~$\mathfrak{b}$ in $A \amalg B$ to
the corresponding alternating product in $R\series{a,b}$.
Thus, as an $R$-bimodule map, $A \amalg B \to \alpha(A \amalg B) $
decomposes as the direct sum of the three identity maps
$R \xrightarrow{\alpha_\emptyset} R$,
$\mathfrak{a} \xrightarrow{\alpha_a} \mathfrak{a}$,
$\mathfrak{b} \xrightarrow{\alpha_b} \mathfrak{b}$, and the further $R$-bimodule maps, $n \ge 1$,
\begin{align*}
(\mathfrak{a} {\otimes} \mathfrak{b})^{\otimes n} &\xrightarrow{\alpha_{(ab)^n}} (\mathfrak{a}\mathfrak{b})^n,  &
(\mathfrak{b} {\otimes} \mathfrak{a})^{\otimes n} &\xrightarrow{\alpha_{(ba)^n}} (\mathfrak{b}\mathfrak{a})^n,  \\
(\mathfrak{a} {\otimes} \mathfrak{b})^{\otimes n} \otimes \mathfrak{a} &\xrightarrow{\alpha_{(ab)^n a}}
(\mathfrak{a}\mathfrak{b})^n\mathfrak{a}, &
(\mathfrak{b} {\otimes} \mathfrak{a})^{\otimes n} \otimes \mathfrak{b} &\xrightarrow{\alpha_{(ba)^n b}}
(\mathfrak{b}\mathfrak{a})^n\mathfrak{b}.
\end{align*}
Thus $\alpha$ is injective if and only if each of the four maps is  injective for all $n \ge 1$.
\end{notation}

\section{Ore localizations}\label{sec:flat}

We can now give a simple condition that ensures that $\alpha$ is injective.

\begin{proposition}\label{prop:loc} If
{\hskip1mm\normalfont Hypotheses~\ref{hyp:main}} hold and, moreover,
$A$ is a right Ore localization of $R[a]$ and $B$ is a right Ore
localization of $R[b]$, then $\alpha$ is injective.
\end{proposition}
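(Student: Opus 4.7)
The plan is to use the direct-sum decomposition at the end of Section~\ref{sec:not} to reduce $\alpha$ to its bimodule components $\alpha_w$ (which have pairwise disjoint $\{a,b\}^\ast$-supports), then to identify $\mathfrak{a}$ and $\mathfrak{b}$ with $A$ and $B$ as bimodules using the Ore hypothesis, and finally to prove injectivity by combining two-sided flatness of the Ore localizations with the elementary polynomial-ring case. By the $a$-$b$ symmetry, only the component maps on $(\mathfrak{a}\otimes\mathfrak{b})^{\otimes n}$ and on $(\mathfrak{a}\otimes\mathfrak{b})^{\otimes n}\otimes\mathfrak{a}$ need to be treated.

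The key structural step is the identity $\mathfrak{a}=aA=Aa$ (and the analogue for $\mathfrak{b}$). Given $x\in\mathfrak{a}$, I would write $x=ps^{-1}$ with $p\in R[a]$ and $s\in\Sigma$. Since $s^{-1}\in A\subseteq R\series{a}$, the element $s$ is a unit in $R\series{a}$, so its constant term $\epsilon(s)$ is a unit in $R$; then $\epsilon(x)=0$ combined with $\epsilon(p)=\epsilon(x)\epsilon(s)$ gives $p\in aR[a]$ and hence $x\in aA$. Because $a$ is $R$-centralizing and a non-zero-divisor in $R\series{a}$, left multiplication by $a$ is an $(R,A)$-bimodule isomorphism $A\isom\mathfrak{a}$; analogously for $Aa$ and for $\mathfrak{b}\cong B$. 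Combining these, and using that left multiplication by $a$ and right multiplication by $b$ are non-zero-divisors in $R\series{a,b}$, each $\alpha_w$ reduces to a ``multiplication'' map $\mu_n\colon A\otimes_R B\otimes_R A\otimes_R\cdots\to R\series{a,b}$ on an alternating tensor product of $A$'s and $B$'s.

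The crux is then injectivity of $\mu_n$. The polynomial analog $R[a]\otimes_R R[b]\otimes_R\cdots\to R\series{a,b}$ is immediate: its source is $R$-free on tensors of alternating monomials, which map to the pairwise distinct monomials $a^{i_1}b^{j_1}\cdots$ in $R\series{a,b}$. To pass to the Ore case I would invoke (i) that right Ore localizations are flat on both sides, so $A$ is flat as a left and as a right $R[a]$-module (similarly $B$ over $R[b]$); and (ii) that since every $s\in\Sigma$ is already a unit in $R\series{a,b}$ (because $s^{-1}\in A\subseteq R\series{a,b}$), the inclusion $R[a]\to R\series{a,b}$ factors uniquely through $A$, so the natural action maps $A\otimes_{R[a]}R\series{a,b}\to R\series{a,b}$ and $R\series{a,b}\otimes_{R[a]}A\to R\series{a,b}$ are isomorphisms (and similarly for $B$). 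An induction on the number of Ore-localized factors in $\mu_n$ then replaces each polynomial $R[a]$ by $A$ (and each $R[b]$ by $B$): at each step the tensor product is factored as $(L\otimes_R R[a])\otimes_{R[a]}(R[a]\otimes_R R_r)$ with the factor to be replaced sitting in the middle, an $A$ is inserted via $\otimes_{R[a]}A\otimes_{R[a]}$ preserving injectivity by (i), and $R\series{a,b}$ absorbs the inserted $A$-action on either side by (ii), leaving $R\series{a,b}$ on the target.

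The principal obstacle is the bookkeeping in this induction when the factor being localized sits in the middle of the tensor rather than at an end: the bimodule structures must be carefully reorganized so that the two-sided flatness of $A$ over $R[a]$ is applicable, and the isomorphisms in (ii) are needed to collapse both endpoints back to $R\series{a,b}$ on the target. Once this single-factor localization step is carried out, the induction closes and injectivity of each $\alpha_w$, and hence of $\alpha$, follows.
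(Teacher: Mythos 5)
Your reduction to the component maps $\alpha_w$ and the identification $\mathfrak{a}=aA$, $\mathfrak{b}=bB$ are fine (the paper does not even need the latter), but the heart of your argument --- replacing the polynomial factors of the alternating tensor product by their Ore localizations one at a time --- has a genuine gap. First, the fact invoked in (i) is not available: a right Ore localization $A=R[a]\Sigma^{-1}$ is flat as a \emph{left} $R[a]$-module (it is a directed union of the free left modules $R[a]s^{-1}$), but it need not be flat as a right $R[a]$-module in general, and nothing in the hypotheses provides this. Second, and more fundamentally, even granting two-sided flatness, the middle-factor insertion does not follow from flatness plus the absorption isomorphisms of (ii). Given a kernel element of $M\otimes_{R[a]}A\otimes_{R[a]}N\to R\series{a,b}$ (with $M$ the left part and $N$ the right part), the natural move is to clear the common denominator $s\in\Sigma$ of the middle entries so as to land in the image of $M\otimes_{R[a]}N$, where the inductive hypothesis applies; but right multiplication by $s$ on the middle factor is left $R[a]$-linear and not right $R[a]$-linear, so it induces no map on the three-fold tensor product, and the kernel element cannot be pushed into that image. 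Flatness lets you tensor an injection of one-sided modules with $A$; it does not yield injectivity of the middle-localized multiplication map from the polynomial one. (A smaller slip: the ``polynomial analog'' as you state it, the plain multiplication map on $R[a]\otimes_R R[b]\otimes_R R[a]\otimes\cdots$, is not injective once two $R[a]$-factors occur, since $a\otimes 1\otimes a$ and $a^2\otimes 1\otimes 1$ have the same image; one must keep the interleaved $a$'s and $b$'s, i.e.\ work with $aR[a]$, $bR[b]$, throughout.)

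The paper's proof is organized precisely to avoid the middle-factor problem: it proves injectivity of the $\alpha_w$ by induction on the length of $w$, adjoining one letter at a time at the right-hand end. From injectivity of $\alpha_{(ab)^n}$ it deduces, by an $\{a,b\}^*$-support argument, injectivity of $(\mathfrak{a}\otimes\mathfrak{b})^{\otimes n}\otimes R[a]\to R\series{a,b}$, and then passes from $R[a]$ to $A$ by writing $A$ as the directed union of the left $R$-modules $R[a]u$, $u\in\U(A)$, using that right multiplication by the unit $u$ is bijective on both source and target; only the left-module structure of the newly adjoined, rightmost factor is ever used, so no flatness on the other side and no absorption isomorphisms are needed. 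To salvage your scheme you would have to restructure the induction in the same way, so that the factor being localized is always the last one.
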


\begin{proof} We use Notation~\ref{not:2}.
We shall show below that for any $n\ge 1$, if $\alpha_{(ab)^n}$
is injective, then $\alpha_{(ab)^n a}$ is injective.
The analogous arguments will clearly work for the other three cases
required.

Suppose then that $   (\mathfrak{a} \otimes
\mathfrak{b})^{\otimes n} \xrightarrow{\alpha_{(ab)^n}} (\mathfrak{a}\mathfrak{b})^n
\subseteq R\series{a,b}$ is injective.  By considering $\{a,b\}^*$-supports,
we see that, for each $d \ge 0$,
 $(\mathfrak a\otimes\mathfrak b)^{\otimes n} {\otimes}\, Ra^d \to
(\mathfrak a\mathfrak b)^n  a^d$  is then injective.   Again by considering $\{a,b\}^\ast$-supports,
we see that the map
$${\textstyle \bigoplus\limits_{d \ge 0}}
\Bigl( (\mathfrak{a} {\otimes} \mathfrak{b})^{{\otimes} n} \otimes
(Ra^d )\Bigr)
\to {\textstyle\bigoplus\limits_{d \ge 0}}
\Bigl((\mathfrak{a}\mathfrak{b})^na^d\Bigr)\subseteq R\series{a,b}$$
is injective; that is, the map
$$(\mathfrak{a} {\otimes} \mathfrak{b})^{{\otimes} n} \otimes (R[a])
\to (\mathfrak{a}\mathfrak{b})^n(R[a])\subseteq R\series{a,b}$$
is injective.

Since $A$ is a right Ore localization of $R[a]$, there exists
some subset $U$ of $ \text{U}(A)$ such that
the family $(R[a]u : u \in U)$ forms a directed system under
inclusion and the direct limit of the system is~$A$.
For each $u \in U$,  we have a commutative diagram
$$\begin{CD}
\textstyle(\mathfrak{a} {\otimes} \mathfrak{b})^{{\otimes} n} \otimes (R[a])
 @>>>\textstyle  (\mathfrak{a}\mathfrak{b})^n (R[a]) \\[-5pt]
@VVuV    @VVuV \\
\textstyle(\mathfrak{a} {\otimes} \mathfrak{b})^{{\otimes} n} \otimes  (R[a]u)
 @>>>\textstyle  (\mathfrak{a}\mathfrak{b})^n(R[a]u)
\end{CD}$$
in which the  maps on the top and the two sides are bijective.  Hence, the map on the bottom,
$$(\mathfrak{a} {\otimes} \mathfrak{b})^{{\otimes} n} \otimes (R[a]u)
\to (\mathfrak{a}\mathfrak{b})^n(R[a]u)\subseteq R\series{a,b},$$
is injective.
On taking the directed union over $u \in U$, we find that the map
$$(\mathfrak{a} \otimes \mathfrak{b})^{\otimes n}\otimes A
\to (\mathfrak{a}\mathfrak{b})^nA \subseteq R\series{a,b}$$
is injective.
By considering $\{a,b\}^\ast$-supports,
we see that the latter map is the direct sum of the two maps
$\alpha_{(ab)^n}$ and $\alpha_{(ab)^n a}$.
Hence, $\alpha_{(ab)^n a}$ is injective.

The other three inductive implications can be proved in the same way.
The injectivity of $\alpha$ then follows by induction.
\end{proof}

\begin{remarks} In Example~\ref{ex:Fox}, we noted that
one case of Proposition~\ref{prop:loc} is the result of
Fox~\cite{Fox53} that the natural map $RF \to R\series{a,b}$ is
injective, where $F$ is the free group on two symbols $1{+}\,a$, $1{+}\,b$.
Fox's proof  introduced $R$-linear endomorphisms of $RF$ that are now called
\textit{Fox derivatives}; see~\cite[Example 7, p.\,55]{Cohn85}.
To indicate one important connection between the above proof of
Proposition~\ref{prop:loc} and Fox's original argument, we
remark that right multiplication by $a$ gives a bijective map
$$(\mathfrak{a} {\otimes} \mathfrak{b})^{{\otimes} n} \otimes (Ra^{d} )
\to
(\mathfrak{a} {\otimes}\mathfrak{b})^{{\otimes} n}\otimes (Ra^{d+1} ),$$
and the inverse map has the same action as that of the left Fox
derivative on $RF$ with respect to $1{+}\,a$ relative to the free
generating set $\{1{+}\,a,1{+}\,b\}$.

The original motivation of this article
was to show that the proof of the embedding became more transparent
when more emphasis was placed on the ring-theoretic viewpoint.
\end{remarks}

\section{Review of $\textstyle\Pi$-semihereditary rings}\label{sec:back}

In this section, we review some of the history of what we call
$\Pi$-semihereditary rings.
This will provide us with some examples
to which the results of the next section apply.
We shall recall the proofs since they are all fairly short, scattered over the literature,
and  not always available in the formulation that will be required.
No new results are claimed, but we have tried to give proofs that minimize background
requirements.

\begin{definitions}\label{defs:early}
For each of the definitions given below, the dual left-right definition
 will also be understood.

 Let $M$ be a left $R$-module.

(1) Let $X$ and $Y$ be sets.

For  $m,\, n \in \naturals$, we let $\null^m\mkern-1mu Y^n$ denote the
set of \mbox{$m\times\,n$} matrices with coordinates in~$Y$. Absence of an exponent is to be read as an exponent~$1$.

Let $\prod\limits_{X} Y$  denote the set whose elements are the functions $X \to Y$, $x \mapsto y_x$,
represented as families $(y_x : x \in X)$.
We shall sometimes write $\prod\limits_{X} Y$ as  $Y^X$, and think of the
elements intuitively as `$1 \times X$ matrices'.  Similarly, we sometimes
write $\prod\limits_{X} Y$ as  $\null^X \mkern-2mu Y$, and think of the
elements intuitively as `$X \times 1$ matrices'.

In a natural way, $\prod\limits_X  R$ is an $R$-bimodule and
 $\prod\limits_X  M$ is a left $R$-module.
 By the \textit{multiplication map} \mbox{$(\prod\limits_X  R) \otimes
M \to (\prod\limits_X  R) M \subseteq  \prod\limits_X M$},
we shall mean the left $R$-module map determined by
\mbox{$(r_x:x \in X) \otimes m \mapsto (r_xm: x \in X).$}
We shall sometimes let \mbox{$(\null^X \mkern-2mu R) \otimes
M \to (\null^X \mkern-2mu R) M \subseteq  \null^X \mkern-2mu M$}
denote  the underlying additive map; here, $  \null^X \mkern-2mu R $ has a right $R$-module structure and
$\null^X \mkern-2mu M$ has an abelian group structure.  When $M$ is an $R$-bimodule, the right $R$-action is
to be maintained in all cases.

When $M =  \prod\limits_Y  R$, we  let $\mu_{X,Y}$ denote the foregoing multiplication map.
Here, we have an  $R$-bimodule map
\begin{align*}
&\hskip1.8cm\textstyle(\prod\limits_{X} R) \otimes (\prod\limits_{Y} R) \xrightarrow{\mu_{X,Y}}
\prod\limits_{X  \times Y } R,  \\
&(r_x : x \in X )
\otimes (s_y : y \in Y ) \mapsto (r_x s_y : (x,y) \in X  \times Y ).
\end{align*}
We shall sometimes denote the underlying additive map by
$\null^X\mkern-2mu R \otimes R^Y \xrightarrow{\mu_{X,Y}} \null^X\mkern-2mu R^Y$,
where we think of the elements  of $\null^X\mkern-2mu R^Y$ imprecisely
as `$X \times Y$ matrices'.

(2) We say that $M$ is \textit{torsionless}
if there exists an injective left $R$-module map from
$M$ to a direct product of copies of $R$.
Clearly, submodules of torsionless modules are torsionless.

(3)  One says that $M$ is \textit{coherent} if every finitely generated
submodule is finitely presented.
Clearly, submodules of coherent modules are coherent.

The ring $R$ is \textit{left coherent} if $R$ is coherent as left
$R$-module, that is, every finitely generated left ideal of $R$
is finitely presented as left $R$-module.

The ring $R$ is \textit{left $\Pi$-coherent} if, for every set~$Y$,
$R^Y$ is coherent as left $R$-module, or, equivalently,
every finitely generated, torsionless, left $R$-module is
finitely presented.
This property was studied first by Jones~\cite[p.\,103]{Jones}.
Clearly, left $\Pi$-coherent rings are left coherent.

(4)  We say that $M$ is \textit{semihereditary} if every finitely
generated submodule is projective.
Clearly, semihereditary modules are coherent, and submodules of semihereditary modules are semihereditary.

The ring $R$ is \textit{left semihereditary} if $R$ is semihereditary as left $R$-module, that is,
every finitely generated left ideal of $R$ is projective as left $R$-module.

We now come to the property of interest to us, which was studied first
by Jones~\cite{Jones},
who showed that it is left-right symmetric; see Theorem~\ref{thm:equiv} below.
We say that the ring $R$ is \textit{$\Pi$-semihereditary}
if, for every set $X$, $\null^X\mkern-2mu R$ is semihereditary as right $R$-module, or, equivalently,
every finitely generated, torsionless right $R$-module is projective.
Prior to Jones' work, Goodearl~\cite{Goodearl72} had mentioned that right self-injective, von
Neumann regular rings have this property; see Theorem~\ref{thm:Cateforis} below.

Clearly, $\Pi$-semihereditary rings are right semihereditary and right $\Pi$-coherent.

(5) We say that $M$ is \textit{flat} if, for all injective right
 $R$-module maps $K \to F$, the
induced map $K \otimes_RM \to F \otimes_RM$ is injective, or, equivalently,
for all finitely generated right ideals $I$ of $R$,
the multiplication map \mbox{$I \otimes_R M \to IM \subseteq M = R\otimes_R M$} is injective.
See~\cite[Theorem~3.53]{Rotman}.  Notice that it is therefore sufficent to consider
inclusion maps $K \to F$ where $F$ is a finitely generated, free, right $R$-module and
$K$ is a finitely generated $R$-submodule of $F$.

If every finitely generated submodule of $M$ is flat, then $M$ is flat. More generally, if
$M$ is the direct  limit of a directed system  of flat modules, then $M$ is flat.
 See~\cite[Theorem~3.47]{Rotman}.

If $M$ is flat, then every left $R$-summand of $M$ is flat. See~\cite[Theorem~3.45]{Rotman}.

We write $\text{w.gl.dim\,}R \le 1$ if every finitely generated left
ideal of $R$ is flat as left $R$-module, or,
equivalently, every submodule of every flat left $R$-module is flat,
or equivalently every submodule of every flat right $R$-module is flat; see~\cite[Theorem~9.24]{Rotman}.
Otherwise, we write $\text{w.gl.dim\,}R \ge 2$.

(6)  For any infinite cardinal $\kappa$, we say that $R$ is \textit{left $\kappa$-Noetherian} if every
left ideal of $R$ is generated by $\kappa$ (or fewer) elements.  For example, if $\vert R \vert \le \aleph_0$, then
$R$ is left $\aleph_0$-Noetherian.  Also, if  $\vert R \vert \ge \aleph_0$, then
$R$ is left $\vert R \vert$-Noetherian.
\end{definitions}

The next result is a variant of~\cite[Theorem~2.4]{Cohn59}   suitable for our purposes.

\begin{theorem}\label{thm:flatpre2}  {\normalfont (Cohn, 1959)}    For any
flat left $R$-module $F$ and
 left $R$-submodule $K$ of $F$,  the following are equivalent.\vspace{-2mm}
\begin{enumerate}[{\rm (a)}]
\setlength\itemsep{-2pt}
\item \label{it:Flat0pre2} $ F/K$  is left $R$-flat.
\item \label{it:Flat3pre2}  For all  $\ell$, $m  \in \naturals$  and
$y \in \null^\ell \mkern-2mu R^m$, \mbox{$(y  (\null^mF))\cap   (\null^\ell\mkern-2mu K) \subseteq
y(\null^m \mkern-2mu K)$} in $\null^\ell F$.
\end{enumerate}
\end{theorem}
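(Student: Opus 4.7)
The plan is to establish the equivalence via two intermediate reformulations: first, via the long exact sequence of $\Tor$, replace (a) with the assertion that $M \otimes_R K \to M \otimes_R F$ is injective for every right $R$-module $M$ (equivalently, for every finitely presented right $R$-module $M$); second, translate this injectivity into the matrix condition (b) by describing finitely presented right $R$-modules as cokernels of matrix maps.

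For the first reformulation, I would apply $\Tor_*^R(M,\cdot)$ to the short exact sequence $0 \to K \to F \to F/K \to 0$. Since $F$ is flat, $\Tor_1^R(M,F) = 0$ for every right $R$-module $M$, so the initial segment collapses to
$$0 \to \Tor_1^R(M,F/K) \to M \otimes_R K \to M \otimes_R F,$$
whence (a), the vanishing of $\Tor_1^R(M,F/K)$ for all $M$, is equivalent to the injectivity of $M \otimes_R K \to M \otimes_R F$ for all $M$. Since every right $R$-module is a filtered colimit of finitely presented ones and tensor product commutes with filtered colimits (which preserve injectivity of compatible maps), it then suffices to test the condition on finitely presented right $R$-modules.

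For the second reformulation, I would use that every finitely presented right $R$-module $M$ arises as the cokernel of a right-module map $R^m \to R^\ell$ represented by a matrix $y \in \null^\ell R^m$, chosen so that the $i$-th basis vector of $R^m$ maps to the $i$-th column of $y$. Tensoring this presentation with the left module $F$ and identifying $R^k \otimes_R F$ with $\null^k F$ via $(r_1,\ldots,r_k) \otimes f \mapsto (r_1 f,\ldots,r_k f)^T$ realizes the tensored map $\null^m F \to \null^\ell F$ as left multiplication by $y$, and the analogous identification works for $K$. Hence the injectivity of $M \otimes_R K \to M \otimes_R F$ becomes precisely the condition $(y(\null^m F)) \cap (\null^\ell K) \subseteq y(\null^m K)$, and as $M$ ranges over all finitely presented right $R$-modules, $y$ ranges over all matrices in $\null^\ell R^m$, yielding (b).

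The main obstacle will be verifying the matrix conventions in the second reformulation, so that after tensoring the right-module presentation with the left $R$-module $F$, the action of $y$ on column vectors is indeed by left multiplication (not by $y^T$ on row vectors, say). Once that identification is pinned down, the two reformulations combine immediately to give the stated equivalence.
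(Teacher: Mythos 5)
Your proof is correct, but it takes a more homological route than the paper's. The paper never invokes $\Tor$: it starts from the definition of flatness of $F/K$, namely injectivity of $N \otimes_R (F/K) \to (\null^\ell\mkern-2mu R) \otimes_R (F/K)$ as $N = y(\null^m\mkern-2mu R)$ runs over the finitely generated submodules of finitely generated free right modules, and then uses the flatness of $F$ to make the identifications $(y(\null^m\mkern-2mu R))\otimes_R F = y(\null^m\mkern-2mu F)$, hence $(y(\null^m\mkern-2mu R))\otimes_R (F/K) = (y(\null^m\mkern-2mu F))/(y(\null^m\mkern-2mu K))$ and $(\null^\ell\mkern-2mu R)\otimes_R(F/K) = (\null^\ell\mkern-2mu F)/(\null^\ell\mkern-2mu K)$; injectivity of the resulting map is literally condition (b). You instead apply the long exact sequence of $\Tor$ to $0 \to K \to F \to F/K \to 0$ (flatness of $F$ entering only through $\Tor_1^R(M,F)=0$), reduce to finitely presented test modules $M$ via the fact that every module is a directed colimit of finitely presented ones, and then compute $M\otimes_R K \to M\otimes_R F$ from a presentation matrix; your worry about conventions is unfounded, since with $y \in \null^\ell\mkern-2mu R^m$ and the right-module map $\null^m\mkern-2mu R \to \null^\ell\mkern-2mu R$, $v \mapsto yv$, tensoring with a left module $F$ does give left multiplication $\null^m\mkern-2mu F \to \null^\ell\mkern-2mu F$, exactly as needed. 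In effect both arguments compute the same group $\Tor_1^R\bigl((\null^\ell\mkern-2mu R)/y(\null^m\mkern-2mu R),\,F/K\bigr)$, once from a presentation of the test module and once from the flat presentation of $F/K$; the paper's version buys elementarity (no derived functors, no Lazard-type colimit theorem, in keeping with its stated aim of minimizing background), while yours is quicker for a reader with standard homological algebra in hand. If you wish to lighten your background requirements, the directed-colimit step can be dropped: flatness of $F/K$ is already detected by $\Tor_1^R(R/I, F/K)$ for finitely generated right ideals $I$, and these $R/I$ occur among your finitely presented test modules (the case $\ell = 1$).
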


\begin{proof} If $\ell$, $m  \in \naturals$  and
$y \in \null^\ell \mkern-2mu R^m$, then $\null^\ell \mkern-2mu R$ may be considered an
arbitrary  finitely generated, free, right $R$-module  and $y(\null^m \mkern-2mu R)$
 an arbitrary finitely generated, right $R$-submodule  of $\null^\ell \mkern-2mu R$.
Thus, the left $R$-module  $M \coloneq  F/K$ is  flat if and only if
the natural map\vspace{-2mm}   \begin{equation}\label{eq:map}
 (y(\null^m \mkern-2mu R)) \otimes_R M
 \to (\null^\ell \mkern-2mu R)\otimes_R M\vspace{-2mm}  \end{equation}
is injective  for all  \mbox{$\ell$, $m$, $y$}.

 Suppose that $\ell$, $m  $,  and
$y$ are given.

We have an exact sequence\vspace{-2mm}  \begin{equation}\label{eq:map2} (y(\null^m \mkern-2mu R))\otimes_R K
\to (y(\null^m \mkern-2mu R))\otimes_R F \to(y(\null^m \mkern-2mu R))\otimes_R M \to 0,  \vspace{-2mm} \end{equation}
and we shall re-interpret each term. Since $F$ is left $R$-flat, \mbox{$(y(\null^m \mkern-2mu R))  \otimes_R F$} embeds in
\mbox{$(\null^\ell \mkern-2mu R) \otimes_RF =  \null^\ell \mkern-2mu F$}, and
the image is $ y(\null^m \mkern-2mu F)$.  Hence, we may make the identification
 \mbox{$(y(\null^m \mkern-2mu R))  \otimes_R F = y(\null^m \mkern-2mu F)$}.
The image of \mbox{$(y(\null^m \mkern-2mu R))  \otimes_RK$} in
 \mbox{$(y(\null^m \mkern-2mu R))  \otimes_R F = y(\null^m \mkern-2mu F)$}  is  then
 \mbox{$y(\null^m \mkern-2mu K)$}.
  Using~\eqref{eq:map2}, we now find that we may  make the identification
  \mbox{$(y(\null^m \mkern-2mu R)) \otimes_R M
=  (y(\null^m \mkern-2mu F))/(y(\null^m \mkern-2mu   K))$}.

In the special case where $y$ is an identity matrix,
we  have the (obvious) identification
\mbox{$(\null^\ell \mkern-2mu R) \otimes_R M =
(\null^\ell \mkern-2mu F)/(\null^\ell \mkern-2mu   K)$}.
We now find that we may  identify~\eqref{eq:map}
  with the natural map
\mbox{$(y(\null^m \mkern-2mu F))/(y(\null^m \mkern-2mu   K)) \to
(\null^\ell \mkern-2mu F)/(\null^\ell \mkern-2mu   K)$}.
Then \eqref{it:Flat0pre2}$\mkern2mu\Leftrightarrow$\eqref{it:Flat3pre2} follows.
\end{proof}

The next two results are taken from~\cite[Proposition~2.2 and Corollary]{Chase},
attributed to  O.\,E.\,\,Villamayor.

\newpage

\begin{theorem}\label{thm:flat}  {\normalfont (Villamayor, 1960)}  For any  $n \in \naturals$ and
 left $R$-submodule $K$ of $R^n$,  the following are equivalent.\vspace{-2mm}
\begin{enumerate}[{\rm (a)}]
\setlength\itemsep{-2pt}
\item \label{it:Flat1} $R^n\mkern-3mu/K$ is left $R$-flat.
\item \label{it:Flat3}   For all $\ell \in \naturals$
and  $x  \in \null^\ell \mkern-2mu K \subseteq  \null^\ell \mkern-2mu R^n$,
 \mbox{$x  \in x (\null^n K)$}.
\end{enumerate}
\end{theorem}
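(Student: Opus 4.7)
The plan is to deduce this statement directly from Theorem~\ref{thm:flatpre2}, applied to the left $R$-module $F = R^n$, which is free and hence flat. Under this specialization, Cohn's criterion reads: $R^n\mkern-3mu/K$ is left $R$-flat if and only if, for all $\ell, m \in \naturals$ and all $y \in \null^\ell R^m$, the inclusion $(y(\null^m R^n)) \cap \null^\ell K \subseteq y(\null^m K)$ holds.

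For \eqref{it:Flat1}$\,\Rightarrow\,$\eqref{it:Flat3}, I would invoke Cohn's criterion with $m = n$ and with $y$ equal to the given element $x \in \null^\ell K$. The $n \times n$ identity matrix $I_n$ lies in $\null^n R^n$ and satisfies $x I_n = x$, so $x$ belongs to $(x(\null^n R^n)) \cap \null^\ell K$, and Cohn's criterion places this element in $x(\null^n K)$, giving exactly \eqref{it:Flat3}.

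For \eqref{it:Flat3}$\,\Rightarrow\,$\eqref{it:Flat1}, I would verify Cohn's criterion directly. Given $\ell, m \in \naturals$, $y \in \null^\ell R^m$, and $x \in y(\null^m R^n) \cap \null^\ell K$, I would write $x = y z$ for some $z \in \null^m R^n$, apply \eqref{it:Flat3} to $x$ to obtain some $K' \in \null^n K$ with $x = x K'$, and conclude $x = y (z K')$. Because $K$ is a left $R$-submodule of $R^n$, each row of $z K'$ is an $R$-linear combination of rows of $K'$ and therefore lies in $K$; thus $z K' \in \null^m K$ and $x \in y(\null^m K)$, as required.

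I do not anticipate any substantive obstacle. The one conceptual move worth flagging is the self-referential substitution $y = x$ in the forward direction, which collapses the universal quantifier over $y \in \null^\ell R^m$ in Cohn's criterion to the universal quantifier over $x \in \null^\ell K$ that appears in \eqref{it:Flat3}, the required auxiliary membership $x \in x(\null^n R^n)$ being furnished trivially by $I_n$.
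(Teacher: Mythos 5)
Your proposal is correct and follows essentially the same route as the paper: both specialize Theorem~\ref{thm:flatpre2} to $F=R^n$, prove \eqref{it:Flat1}$\Rightarrow$\eqref{it:Flat3} by taking $m=n$, $y=x$ and using $x=x\mathbf{I}_n$, and prove the converse by factoring $x=yz$ and absorbing the matrix from \eqref{it:Flat3} via the inclusion $(\null^m\mkern-2mu R^n)(\null^n\mkern-2mu K)\subseteq \null^m\mkern-2mu K$, which is exactly the paper's chain $x\in(x)(\null^n\mkern-2mu K)\subseteq(y(\null^m\mkern-2mu R^n))(\null^n\mkern-2mu K)\subseteq y(\null^m\mkern-2mu K)$.
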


\begin{proof}   The case  $F = R^n$  of
 Theorem~\ref{thm:flatpre2}   allows us to rewrite
\eqref{it:Flat1}  as

\noindent (\ref{it:Flat1}$'$)\hskip5pt{\em For all  $\ell$, $m  \in \naturals$  and
$y \in \null^\ell \mkern-2mu R^m$,
\mbox{$(y  (\null^mR^n))\cap   (\null^\ell\mkern-2mu K) \subseteq
y(\null^m \mkern-2mu K)$} in $\null^\ell R^n$.}

(\ref{it:Flat1}$'$)$\Rightarrow$\eqref{it:Flat3}.  Suppose $x \in  \null^\ell \mkern-2mu K$. Taking $m = n$ and $y = x$ in~(\ref{it:Flat1}$'$)  we obtain
\mbox{$(x  (\null^nR^n))\cap   (\null^\ell\mkern-2mu K) \subseteq
x(\null^n \mkern-2mu K)$} in $\null^\ell R^n$.  Then   $x   =    x  \mathbf{I}_n  \in   x   (\null^n\mkern-2mu R^n) \cap
(\null^\ell\mkern-2mu K)   \subseteq  x (\null^n \mkern-2mu K)$.

\mbox{\eqref{it:Flat3}$\Rightarrow$(\ref{it:Flat1}$'$)}. Suppose $y \in \null^\ell \mkern-2mu R^m$. Consider any  \mbox{$x \in
(y  (\null^m\mkern-2mu R^n))\cap   (\null^\ell\mkern-2mu K)$}.  We have \mbox{$x \in \null^\ell\mkern-2mu K$},
and then, by~\eqref{it:Flat3},
\mbox{$x \in (x)(\null^n \mkern-2mu K)  \subseteq
(y(\null^m\mkern-2mu R^n))(\null^n \mkern-2mu K)\subseteq y(\null^m\mkern-2mu K)$}.
\end{proof}

\begin{theorem}\label{thm:Vil} {\normalfont (Villamayor, 1960)}
For any left $R$-mod\-ule~$M$, the following are equivalent.\vspace{-2mm}
\begin{enumerate}[{\rm (a)}]
\setlength\itemsep{-2pt}
\item \label{it:Vila} $M$ is finitely generated and projective.
\item \label{it:Vilb} $M$ is finitely presented and flat.
\end{enumerate}
\end{theorem}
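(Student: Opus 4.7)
\emph{Proof proposal.} The plan is to handle the two directions separately. The implication \eqref{it:Vila}$\Rightarrow$\eqref{it:Vilb} is essentially formal and does not require Theorem~\ref{thm:flat}: if $M$ is finitely generated and projective, then $M$ is a direct summand of some $R^n$, say $R^n = M \oplus M'$. The complement $M'$, being a quotient of $R^n$, is finitely generated, so the projection $R^n \onto M$ realises $M$ as $R^n/M'$ with finitely generated kernel, yielding a finite presentation. Flatness follows because $R^n$ is flat and, by a fact recorded in Definition~\ref{defs:early}(5), direct summands of flat modules are flat.

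The real content lies in \eqref{it:Vilb}$\Rightarrow$\eqref{it:Vila}. Starting from a finite presentation $R^m \to R^n \to M \to 0$, let $K$ denote the image of $R^m \to R^n$; then $K$ is finitely generated and $M \cong R^n/K$ is flat, so Theorem~\ref{thm:flat} is available. My aim is to show that $K$ is a direct summand of $R^n$, for then $M \cong R^n/K$ is itself a summand of $R^n$, and is in particular both finitely generated and projective.

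The key is to exhibit an explicit retraction of $K \into R^n$. Choose generators $k_1,\ldots,k_\ell$ of $K$ and assemble them into the matrix $x \in \null^\ell K$ whose $i$-th row is $k_i$. Theorem~\ref{thm:flat}\eqref{it:Flat3} then produces some $P \in \null^n K$ with $x = xP$; reading off rows gives $k_i = k_i P$ for every $i$. Because the $k_i$ generate $K$ as a left $R$-module, and because $v \mapsto vP$ is a left $R$-module map $R^n \to R^n$, linearity extends this to $k = kP$ for every $k \in K$. The map $\phi\colon R^n \to K$, $v \mapsto vP$, which does land in $K$ since the rows of $P$ lie in $K$, is then the sought retraction.

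The only step requiring any insight is the choice of $x$: by packaging a finite generating set of $K$ into a single matrix, the one datum $P$ returned by Theorem~\ref{thm:flat} is forced to act as the identity on every generator simultaneously, and hence, by $R$-linearity, on all of $K$. Once this is recognised, the splitting of $K \into R^n$, and consequently the projectivity and finite generation of $M$, follow without further effort.
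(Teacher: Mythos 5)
Your proposal is correct and follows essentially the same route as the paper: the forward direction is the standard summand argument, and for (b)$\Rightarrow$(a) you package a finite generating set of $K$ into a matrix $x$, invoke Theorem~\ref{thm:flat} to get $P$ (the paper's $e$) with $x = xP$, and use right multiplication by $P$ as a retraction $R^n \to K$, which is exactly the paper's argument. The only difference is that you spell out the "clear" implication and the final splitting step in slightly more detail.
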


\begin{proof} \eqref{it:Vila}$\Rightarrow$\eqref{it:Vilb} is clear.

\eqref{it:Vilb}$\Rightarrow$\eqref{it:Vila}.  Since $M$ is a finitely presented left $R$-module,
there exist  \mbox{$n \in \naturals$} and  a finitely generated
left $R$-submodule $K$ of $R^n$ such that $M = R^n\mkern-2mu/K$.
Since  $K$ is finitely generated,
 there exist  \mbox{$\ell \in \naturals$}
and $x \in \null^\ell\mkern-2mu K \subseteq  \null^\ell\mkern-3mu R^n$ such that
 \mbox{$K =  (R^{\ell})  x$}.
By Theorem~\ref{thm:flat} \eqref{it:Flat1}$\Rightarrow$\eqref{it:Flat3},
since $M$ is $R$-flat, \mbox{$x  \in  x(\null^n \mkern-2mu K)$}, that is,
  there exists some $e \in  \null^n \mkern-2mu K$ such that \mbox{$x  = x{\cdot} e$}.
Now the left $R$-module map $R^n \to K$, $y \mapsto y{\cdot}e$, acts as the identity on
$K$ because it does so on the rows of $x$, which form a generating set of~$K$.
 This proves that $R^n\mkern-2mu/K$ is projective.
\end{proof}

The following result is a variant of~\cite[Satz~2]{Lenzing} which
is the same as~\cite[Proposition~2.1]{Cox},   attributed to  M.\,Auslander.

\begin{theorem}\label{thm:Lenz} {\normalfont (Lenzing, 1969 {\footnotesize \&}\,Auslander, 1970)}
For any finitely generated left $R$-module $M$, the following are equivalent.
\vspace{-2mm}
\begin{enumerate}[{\rm (a)}]
\setlength\itemsep{-1.5pt}
\item \label{it:Lenza} $M$ is finitely presented.
\item \label{it:Lenzb} For every set~$X$, the multiplication map
\mbox{$(\null^X \mkern-2mu R) \otimes_R M \to   \null^X \mkern-2mu M$}
is injective.
\item \label{it:Lenzc}  For some infinite set $X$ such that
  $R$ is left $\vert X  \vert$-Noether\-ian, the multiplication map \mbox{$(\null^X \mkern-2mu R) \otimes_R
M \to   \null^X \mkern-2mu M$}
is injective.
\end{enumerate}
\end{theorem}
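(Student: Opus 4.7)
The plan is to prove the three conditions cyclically, \eqref{it:Lenza}$\Rightarrow$\eqref{it:Lenzb}$\Rightarrow$\eqref{it:Lenzc}$\Rightarrow$\eqref{it:Lenza}. For \eqref{it:Lenza}$\Rightarrow$\eqref{it:Lenzb}, I would choose a finite presentation $R^m \to R^n \to M \to 0$ and form the commutative diagram with three vertical multiplication maps whose rows are obtained by applying $(\null^X\mkern-2mu R)\otimes_R (-)$ on top and $\null^X\mkern-2mu(-)$ on bottom. Both rows are exact---the top by right-exactness of tensor, the bottom because $\prod_X$ is exact on left $R$-modules---and the two left verticals are isomorphisms by the trivial fact that tensor commutes with finite direct sums. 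The right-exact four-term variant of the five-lemma (if $\alpha$ is epi and $\beta$ is iso, then $\gamma$ is iso) then forces the multiplication map for $M$ to be an isomorphism, and hence injective, for \emph{every} set $X$. For \eqref{it:Lenzb}$\Rightarrow$\eqref{it:Lenzc}, it suffices to exhibit a suitable $X$; I would take $X = R$ if $R$ is infinite and $X = \naturals$ if $R$ is finite, invoking the final sentence of Definitions~\ref{defs:early}.

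The heart of the argument is \eqref{it:Lenzc}$\Rightarrow$\eqref{it:Lenza}. Since $M$ is finitely generated, I may write $M = R^n/K$ with $n \in \naturals$; applying the two functors to $0 \to K \to R^n \to M \to 0$ gives the commutative diagram with exact rows
$$
\begin{CD}
\null^X\mkern-2mu R \otimes K @>>> \null^X\mkern-2mu R \otimes R^n @>>> \null^X\mkern-2mu R \otimes M @>>> 0 \\
@VV{\mu_K}V @VV{\cong}V @VV{\mu_M}V \\
\null^X\mkern-2mu K @>>> \null^X\mkern-2mu R^n @>>> \null^X\mkern-2mu M @>>> 0
\end{CD}
$$
in which the middle vertical is an isomorphism and $\mu_M$ is injective by hypothesis. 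A short chase---lift any $y \in \null^X\mkern-2mu K$ through the middle isomorphism, use commutativity and injectivity of $\mu_M$ to see that its image in $\null^X\mkern-2mu R \otimes M$ vanishes, then pull it back along the top row---shows that $\mu_K$ is surjective. Because $R$ is left $\vert X\vert$-Noetherian, a routine induction on $n$ lets me choose a generating set for $K \subseteq R^n$ indexed by $X$, which I would assemble (padded by zeros) into a single element $(k_x : x \in X) \in \null^X\mkern-2mu K$. Lifting this through $\mu_K$ to a \emph{finite} sum $\sum_{i=1}^s r^{(i)} \otimes k^{(i)}$ with each $k^{(i)} \in K$ and reading off the $x$th coordinate yields $k_x \in Rk^{(1)} + \cdots + Rk^{(s)}$ for every $x$; hence $K$ is finitely generated by $k^{(1)}, \ldots, k^{(s)}$, and $M$ is finitely presented.

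The only nontrivial step is the diagram chase giving surjectivity of $\mu_K$; everything else is bookkeeping. What is worth underlining is the synergy of the two hypotheses in \eqref{it:Lenzc}: the left $\vert X\vert$-Noetherian condition lets an entire generating set of $K$ be packaged into one element of $\null^X\mkern-2mu K$, while the injectivity of $\mu_M$ then compresses that one element into a \emph{finite} expression---which is the sole place finiteness enters.
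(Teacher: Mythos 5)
Your proposal is correct and follows essentially the same route as the paper: the same presentation-plus-diagram argument for \eqref{it:Lenza}$\Rightarrow$\eqref{it:Lenzb}, and for \eqref{it:Lenzc}$\Rightarrow$\eqref{it:Lenza} the same chase giving $(\null^X\mkern-2mu R)\otimes_R K \to \null^X\mkern-2mu K$ surjective, followed by packaging an $|X|$-indexed generating family of $K$ into one element of $\null^X\mkern-2mu K$ and compressing it to a finite generating set. Your only additions are making the ``clear'' step \eqref{it:Lenzb}$\Rightarrow$\eqref{it:Lenzc} explicit via Definitions~\ref{defs:early}(6), which is fine.
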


\begin{proof} \eqref{it:Lenza}$\Rightarrow$ \eqref{it:Lenzb}.  By~\eqref{it:Lenza}, there exist
$m$, $n \in \naturals$ and an exact sequence of left $R$-modules  \mbox{$R^m \to   R^n \to M \to 0$}.
 We then have a  commutative diagram
with exact rows:
$$\begin{CD}
\textstyle(\null^X \mkern-2mu R) \otimes_R (R^m)@>>>(\null^X \mkern-2mu R) \otimes_R (R^n)@
>>>\textstyle(\null^X \mkern-2mu R) \otimes_R M @>>>0\\[-5pt]
\hskip-.7cm\mu_{X,m}@|  \hskip-.7cm\mu_{X,n}@|  \hskip.2cm@VVV \\
\textstyle  \null^X \mkern-2mu R^m@>>>\textstyle  \null^X \mkern-2mu  R^n @
>>>  \null^X \mkern-2mu M@>>>0.\\[-3pt]
\end{CD}$$
Hence, $(\null^X \mkern-2mu R) \otimes_R M \to  \null^X \mkern-2mu M$ is injective (and surjective).

\eqref{it:Lenzb}$\Rightarrow$ \eqref{it:Lenzc} is clear.

\eqref{it:Lenzc}$\Rightarrow$ \eqref{it:Lenza}.  By hypothesis, there exist
 $n \in \naturals$ and an exact sequence of left $R$-modules $0 \to K \to R^n \to M \to 0$,
and it suffices to show that $K$ is finitely generated.
We  then have a  commutative diagram
with exact rows:
$$\begin{CD}
@.\textstyle(\null^X \mkern-2mu R) \otimes_R K @>>>\textstyle(\null^X \mkern-2mu R) \otimes_R (R^n)@
>>>\textstyle(\null^X \mkern-2mu R) \otimes_R M@>>>0 \\
@.\hskip.2cm@VVV  \hskip-.7cm\mu_{X,n}@|  \hskip.2cm@| \\
0@>>>\textstyle  \null^X \mkern-2mu K@>>>\textstyle  \null^X \mkern-2mu R^n@
>>>  \null^X \mkern-2mu M@>>>0,
\end{CD}$$
where $(\null^X \mkern-2mu R) \otimes_R M \to \null^X \mkern-2mu M$ is injective by~\eqref{it:Lenzc},
 and is here easily seen to be surjective.
Hence, $(\null^X \mkern-2mu R) \otimes_R K \to  \null^X \mkern-2mu K$ is surjective, that is,
$(\null^X \mkern-2mu R) K =  \null^X \mkern-2mu K$.
Since $R$ is left $\vert X \vert$-Noetherian by~\eqref{it:Lenzc},
$K$~is $\vert X \vert$-generated; hence, there exists some generating family
 \mbox{$(k_x : x \in X)$} which may be viewed as an
 element  of $ \textstyle \null^X\mkern-2mu K $,
and it then has the form \mbox{$\sum_{i=1}^m  (r_{x,i} : x \in X)   k'_i$}, since
$ \textstyle \null^X\mkern-2mu K  =  (\null^X \mkern-2mu R) K $.
Then, for each $x \in X$,  $k_x = \sum_{i=1}^m r_{x,i} k'_i$.
Hence, \mbox{$K = \sum_{x \in X} Rk_x \subseteq \sum_{i=1}^m Rk'_i \subseteq K$}.
Thus, $K$ is finitely generated.
\end{proof}

The next two results are taken from~\cite[Theorems~2.1 and 4.1]{Chase}.

\begin{theorem}\label{thm:Chase} {\normalfont (Chase, 1960)}
The following are equivalent.
\vspace{-2mm}
\begin{enumerate}[{\rm (a)}]
\setlength\itemsep{-2pt}
\item \label{it:Chasea} $R$ is left coherent, i.e. every finitely generated left ideal is finitely presented.
\item\label{it:Chasenew} For every  finitely generated left ideal $I$ of $R$  and every set $X$,
the multiplication map
\mbox{$(\null^X \mkern-2mu R) \otimes_R I
\to  (\null^X \mkern-2mu R)I \subseteq \null^X \mkern-2mu I \subseteq \null^X \mkern-2mu R = (\null^X \mkern-2mu R)\otimes_RR$} is injective.
\item \label{it:Chaseb} For every set $X$, $\null^X \mkern-2mu R$ is right $R$-flat.
\end{enumerate}\vspace{-2mm}
\end{theorem}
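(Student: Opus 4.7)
The plan is to prove the implications (a)$\Rightarrow$(b)$\Rightarrow$(c)$\Rightarrow$(a) as a cyclic chain, using Theorem~\ref{thm:Lenz} of Lenzing--Auslander as the principal tool for the two ``harder'' implications. The key observation that makes everything line up is that, for any finitely generated left ideal $I$ of $R$ and any set $X$, the multiplication map in (b) fits into a factorization
\begin{equation*}
\textstyle(\null^X\mkern-2mu R) \otimes_R I \longrightarrow (\null^X\mkern-2mu R)I \hookrightarrow \null^X\mkern-2mu I \hookrightarrow \null^X\mkern-2mu R \;=\; (\null^X\mkern-2mu R)\otimes_R R,
\end{equation*}
in which the last two maps are inclusions.

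For (a)$\Rightarrow$(b): If $R$ is left coherent, then any finitely generated left ideal $I$ is finitely presented, so Theorem~\ref{thm:Lenz}\eqref{it:Lenza}$\Rightarrow$\eqref{it:Lenzb} applied to $M = I$ makes the map $(\null^X\mkern-2mu R)\otimes_R I \to \null^X\mkern-2mu I$ injective for every set $X$; restricting the codomain to $(\null^X\mkern-2mu R)I$ preserves injectivity, giving~(b).

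For (b)$\Rightarrow$(c): by Definition~\ref{defs:early}(5) (in its right-module form), $\null^X\mkern-2mu R$ is right $R$-flat if and only if, for every finitely generated left ideal $I$ of $R$, the natural map $(\null^X\mkern-2mu R)\otimes_R I \to (\null^X\mkern-2mu R)I \subseteq \null^X\mkern-2mu R$ is injective. That is exactly the content of (b).

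For (c)$\Rightarrow$(a): fix a finitely generated left ideal $I$ and choose an infinite set $X$ with $|X| \ge \max(|R|,\aleph_0)$, so that $R$ is left $|X|$-Noetherian by Definition~\ref{defs:early}(6). Flatness of $\null^X\mkern-2mu R$ applied to the inclusion $I \hookrightarrow R$ gives injectivity of the composition $(\null^X\mkern-2mu R)\otimes_R I \to (\null^X\mkern-2mu R)\otimes_R R = \null^X\mkern-2mu R$; since this composition factors through $\null^X\mkern-2mu I$ along the factorization above, the multiplication map $(\null^X\mkern-2mu R)\otimes_R I \to \null^X\mkern-2mu I$ is injective. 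Theorem~\ref{thm:Lenz}\eqref{it:Lenzc}$\Rightarrow$\eqref{it:Lenza}, with $M=I$, then shows $I$ is finitely presented, establishing~(a). I do not expect any serious obstacle here: both non-trivial directions reduce almost formally to Theorem~\ref{thm:Lenz}, and the only point that requires any care is choosing $X$ large enough in the last step so that the hypothesis of Theorem~\ref{thm:Lenz}\eqref{it:Lenzc} is met.
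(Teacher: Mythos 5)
Your proposal is correct and follows essentially the same route as the paper, which also deduces the equivalence of \eqref{it:Chasea} and \eqref{it:Chasenew} from Theorem~\ref{thm:Lenz} and treats \eqref{it:Chasenew}$\Leftrightarrow$\eqref{it:Chaseb} as immediate from the finitely-generated-ideal criterion for flatness; your arrangement as a cycle, with the explicit choice of an infinite $X$ making $R$ left $\vert X\vert$-Noetherian so that Theorem~\ref{thm:Lenz}\eqref{it:Lenzc}$\Rightarrow$\eqref{it:Lenza} applies, just spells out the details the paper leaves implicit.
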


\begin{proof}[Proof\,\, {\normalfont(after Lenzing~\cite[Section~4]{Lenzing})}.]

\eqref{it:Chasea} \mbox{$
\overset{\text{Theorem~\ref{thm:Lenz}}}
{\Longleftarrow\mkern-3mu=\mkern-3mu=\mkern-3mu=\mkern-3mu=\mkern-3mu\Longrightarrow}$}  \eqref{it:Chasenew}
\mbox{$
\overset{\text{clear}}
{\Leftarrow\mkern-3mu=\mkern-6mu=\mkern-3mu\Rightarrow}$} \eqref{it:Chaseb}.
\end{proof}

\begin{theorem}\label{thm:BC} {\normalfont (Chase, 1960)} The following are equivalent.
\vspace{-2mm}
\begin{enumerate}[{\rm (a)}]
\setlength\itemsep{-2pt}
\item \label{it:sh} $R$ is left semihereditary, i.e. every finitely generated left ideal is projective.
\item\label{it:wd} $\text{\normalfont w.gl.dim\,}R \le 1$
and $R$ is left coherent.
\item\label{it:wd2} $\text{\normalfont w.gl.dim\,}R \le 1$ and, for every set $X$, $\null^X \mkern-2mu R$ is right $R$-flat.
\item \label{it:left} Every torsionless right $R$-module is flat.
\item \label{it:left2} Every finitely generated, torsionless, right $R$-module is flat.
\end{enumerate}
\end{theorem}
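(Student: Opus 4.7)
I would prove the five equivalences by the cyclic chain (a)$\Rightarrow$(b)$\Leftrightarrow$(c)$\Rightarrow$(d)$\Rightarrow$(e)$\Rightarrow$(a). Three of these are essentially immediate: (a)$\Rightarrow$(b) holds because a finitely generated projective module is both flat (so $\text{w.gl.dim\,}R\le 1$) and finitely presented (so $R$ is left coherent); (b)$\Leftrightarrow$(c) is precisely Theorem~\ref{thm:Chase}; and (d)$\Rightarrow$(e) is trivial. This leaves (c)$\Rightarrow$(d) and (e)$\Rightarrow$(a).

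For (c)$\Rightarrow$(d), let $M$ be a torsionless right $R$-module with an embedding $M\hookrightarrow \null^X\mkern-2mu R$, and let $I$ be a finitely generated left ideal. I would consider the commutative square
\[
\begin{CD}
M \otimes_R I @>>> M \\
@VVV @VVV \\
(\null^X\mkern-2mu R) \otimes_R I @>>> \null^X\mkern-2mu R
\end{CD}
\]
with horizontal arrows given by multiplication and vertical arrows induced by $M\hookrightarrow \null^X\mkern-2mu R$. The bottom arrow is injective because $\null^X\mkern-2mu R$ is right $R$-flat by~(c); the left vertical arrow is injective because $I\subseteq R$ is flat as a left $R$-module (since $\text{w.gl.dim\,}R\le 1$), so $-\otimes_R I$ preserves the injection $M\hookrightarrow \null^X\mkern-2mu R$. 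Injectivity of the bottom-then-right composite forces the top arrow to be injective, and since $I$ was arbitrary, $M$ is flat.

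The substantive implication is (e)$\Rightarrow$(a). The obstacle is that (e) grants flatness only to \emph{finitely generated} torsionless right modules, whereas semihereditariness demands \emph{projectivity} of every finitely generated left ideal. My plan has two stages. First, every finitely generated right ideal is itself a finitely generated torsionless right module, so by (e) it is flat; this gives $\text{w.gl.dim\,}R\le 1$, whose left-right symmetry then yields that every finitely generated left ideal $I$ is left-flat. Second---the crux---for every set $X$ the right $R$-module $\null^X\mkern-2mu R$ is the directed union of its finitely generated submodules, each of which is torsionless (as a submodule of $\null^X\mkern-2mu R$) and hence flat by~(e); since a direct limit of flat modules is flat, $\null^X\mkern-2mu R$ itself is right $R$-flat, and Theorem~\ref{thm:Chase} then gives that $R$ is left coherent, so the finitely generated $I$ is finitely presented. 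Villamayor's Theorem~\ref{thm:Vil} now concludes that $I$ is projective. The main difficulty I anticipate is spotting this direct-limit bridge: $\null^X\mkern-2mu R$ is typically far from finitely generated, so (e) does not apply to it directly, and the finite-generation hypothesis in~(e) must first be leveraged through closure of flatness under direct limits before Chase's theorem can be invoked to obtain coherence.
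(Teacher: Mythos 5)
Your proposal is correct and follows essentially the same route as the paper: the substance in both cases is Theorem~\ref{thm:Vil} (flat $+$ finitely presented $\Rightarrow$ projective), Theorem~\ref{thm:Chase} (left coherence $\Leftrightarrow$ right flatness of $\null^X\mkern-2mu R$), and the standard flatness facts from Definitions~\ref{defs:early}(5) (submodules of flat modules are flat when $\text{w.gl.dim\,}R\le 1$, flatness passes to direct limits, left-right symmetry of weak dimension). The only difference is organizational: you close a one-directional cycle (a)$\Rightarrow$(b)$\Leftrightarrow$(c)$\Rightarrow$(d)$\Rightarrow$(e)$\Rightarrow$(a), whereas the paper strings together two-way equivalences (a)$\Leftrightarrow$(b)$\Leftrightarrow$(c)$\Leftrightarrow$(d)$\Leftrightarrow$(e) citing the same two theorems.
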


\begin{proof} \eqref{it:sh} \mbox{$
\overset{\text{Theorem~\ref{thm:Vil}}}
{\Longleftarrow\mkern-3mu=\mkern-3mu=\mkern-3mu=\mkern-3mu=\mkern-3mu\Longrightarrow}$}  \eqref{it:wd}
\mbox{$
\overset{\text{Theorem~\ref{thm:Chase}}}
{\Longleftarrow\mkern-3mu=\mkern-3mu=\mkern-3mu=\mkern-3mu=\mkern-3mu\Longrightarrow}$} \eqref{it:wd2}
\mbox{$
\overset{\text{clear}}
{\Leftarrow\mkern-3mu=\mkern-6mu=\mkern-3mu\Rightarrow}$} \eqref{it:left}
\mbox{$
\overset{\text{clear}}
{\Leftarrow\mkern-3mu=\mkern-6mu=\mkern-3mu\Rightarrow}$} \eqref{it:left2}.
\end{proof}

Goodearl~\cite[Theorem~1]{Goodearl72} generalized Theorem~\ref{thm:Lenz}
 to the non-finitely generated case
by showing that, for any left $R$-module $M$,
all of the multiplication maps \mbox{$(\null^X \mkern-2mu R) \otimes_R M
\to  \null^X \mkern-2mu M$} are injective
if and only if, for each finitely generated submodule $N$ of $M$, the inclusion map $N \to M$ factors
through a finitely presented module.
We record the special case we shall be using and its special proof.
Recall the meaning of $\mu_{X,Y}$ from Definitions~\ref{defs:early}(1).

\begin{theorem}\label{thm:Good} {\normalfont (Goodearl, 1972) }
For any non-empty set $Y$,    the following are equivalent.
\vspace{-2mm}
\begin{enumerate}[{\rm (a)}]
\setlength\itemsep{-2pt}
\item\label{it:Good11} $R^Y$ is coherent as left $R$-module.\vspace{-1mm}
\item\label{it:Good12}  $R$ is a left coherent ring, and, for every set $X$, $\mu_{X,Y}$
  is injective.
\item\label{it:Good13}  $R$ is a left coherent ring,  and, for some infinite set $X$
 such that $R$ is left $\vert X\vert$-Noether\-ian, $\mu_{X,Y}$  is injective.
\end{enumerate}
\end{theorem}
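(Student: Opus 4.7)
The plan is to establish the cycle (a)$\Rightarrow$(b)$\Rightarrow$(c)$\Rightarrow$(a). The common engine in each direction is the commutative square
\[
\begin{CD}
(\null^X\mkern-2mu R)\otimes_R N @>\iota_*>> (\null^X\mkern-2mu R)\otimes_R R^Y \\
@VV\mu_{X,N}V @VV\mu_{X,Y}V \\
\null^X\mkern-2mu N @>\iota>> \null^X\mkern-2mu R^Y
\end{CD}
\]
attached to the inclusion $\iota\colon N\hookrightarrow R^Y$ of a finitely generated submodule. The two previous theorems handle the vertical and top horizontal maps: Chase's Theorem~\ref{thm:Chase} turns left coherence of $R$ into right $R$-flatness of $\null^X\mkern-2mu R$ (which makes $\iota_*$ injective), and Lenzing's Theorem~\ref{thm:Lenz} detects finite presentation of a finitely generated left $R$-module $M$ through injectivity of the multiplication maps $(\null^X\mkern-2mu R)\otimes_R M\to \null^X\mkern-2mu M$.

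For (a)$\Rightarrow$(b), since $Y\ne\emptyset$, the coordinate injection $R\hookrightarrow R^Y$ at any $y_0\in Y$ realises $R$ as a submodule of the coherent module $R^Y$, so $R$ is coherent as a left $R$-module, i.e.\ $R$ is left coherent. To prove $\mu_{X,Y}$ injective, take $\xi\in\ker\mu_{X,Y}$, write $\xi=\sum_{i=1}^n x_i\otimes y_i$, and let $N\coloneq\sum Ry_i\subseteq R^Y$; by coherence of $R^Y$, $N$ is finitely presented. Then $\xi=\iota_*(\xi')$ for some $\xi'\in (\null^X\mkern-2mu R)\otimes_R N$; Theorem~\ref{thm:Lenz}(a)$\Rightarrow$(b) gives $\mu_{X,N}$ injective; and since $\iota$ is injective, a chase of the square yields $\mu_{X,N}(\xi')=0$, hence $\xi'=0$ and $\xi=0$. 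The implication (b)$\Rightarrow$(c) is immediate: take any infinite $X$ of cardinality at least $\max(|R|,\aleph_0)$, so that $R$ is left $|X|$-Noetherian by Definition~\ref{defs:early}(6).

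The content of the theorem lies in (c)$\Rightarrow$(a). Given a finitely generated submodule $N\subseteq R^Y$, the goal is that $N$ be finitely presented. Theorem~\ref{thm:Lenz}(c)$\Rightarrow$(a), applied to $N$ with the $X$ supplied by hypothesis~(c), reduces this to showing that $\mu_{X,N}$ is injective. In the square above, $\iota_*$ is injective because $\null^X\mkern-2mu R$ is right $R$-flat by Theorem~\ref{thm:Chase} (using left coherence of $R$), $\mu_{X,Y}$ is injective by hypothesis, and $\iota$ is injective trivially; hence the composite $\mu_{X,Y}\circ\iota_*=\iota\circ\mu_{X,N}$ is injective, which forces $\mu_{X,N}$ injective. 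Then $R^Y$ is coherent. No real obstacle is expected; the only point requiring care is that a single infinite set $X$ (on which $R$ is $|X|$-Noetherian) serves as a Lenzing witness uniformly for every finitely generated $N\subseteq R^Y$, which is exactly why clause~(c) is phrased with such an~$X$.
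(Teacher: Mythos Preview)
Your proof is correct and follows essentially the same route as the paper's: the same commutative square, the same appeals to Lenzing's Theorem~\ref{thm:Lenz} and Chase's Theorem~\ref{thm:Chase}, and the same logic in each implication. The only cosmetic difference is that in (a)$\Rightarrow$(b) the paper phrases the argument as a direct limit of the injective maps $(\null^X\mkern-2mu R)\otimes_R N\to \null^X\mkern-2mu R^Y$ over all finitely generated $N\subseteq R^Y$, whereas you unpack this as an element chase; these are the same argument.
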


\begin{proof} Let $M \coloneq   R^Y$, and let $\mathcal{M}$ denote the directed family of
finitely generated left $R$-submodules of~$M$.

\eqref{it:Good11}$\Rightarrow$\eqref{it:Good12}.  Since $Y$ is non-empty, $R$ embeds in $M$, and, hence,
 $R$ is coherent as left $R$-module,
that is, $R$ is a left coherent ring.
 Let $N\in\mathcal{M}$. By~\eqref{it:Good11},
$N$ is finitely presented.  By Theorem~\ref{thm:Lenz} \mbox{\eqref{it:Lenza}$\Rightarrow$\eqref{it:Lenzb}},
 the multiplication map \mbox{$( \null^X\mkern-2mu R ) \otimes  N
\to   \null^X\mkern-2mu N \subseteq  \null^X\mkern-2mu M$}
is injective.
On taking the direct limit of the system of   injective maps
\mbox{$( \null^X\mkern-2mu R ) \otimes N \to  \null^X\mkern-2mu M$} over all $N \in \mathcal{M}$,
we see that
\mbox{$( \null^X\mkern-2mu R ) \otimes M \xrightarrow{\mu_{X,Y}}  \null^X\mkern-2mu  M$} is injective, as desired.

\eqref{it:Good12}$\Rightarrow$\eqref{it:Good13} is clear.

\eqref{it:Good13}$\Rightarrow$\eqref{it:Good11}
(after Herbera-Trlifaj~\cite[Corollary 2.11]{Herbera}\,).
Let $N \in  \mathcal{M}$ and consider the natural commutative diagram\vspace{-1mm}
$$\begin{CD}
\textstyle( \null^X\mkern-2mu R) \otimes N  @>>>\textstyle  \null^X\mkern-2mu N\\[-5pt]
@VVV     @VVV \\
\textstyle( \null^X\mkern-2mu R) \otimes M @>\mu_{X,Y}>>\textstyle    \null^X\mkern-2mu M.   \\[-3pt]
\end{CD}$$
By Theorem~\ref{thm:Chase}~\mbox{\eqref{it:Chasea}$\Rightarrow$\eqref{it:Chaseb}},
$ \null^X\mkern-2mu R$ is right
$R$-flat, and, hence, the left map is injective.
By~\eqref{it:Good13},  the bottom map is injective.  Hence, the top map is injective.
By
Theorem~\ref{thm:Lenz} \mbox{\eqref{it:Lenzc}$\Rightarrow$\eqref{it:Lenza}},
$N$ is finitely presented.  Thus,  $M$ is coherent as left $R$-module.
\end{proof}

The following result is a variant of~\cite[(4.5)]{Bass} that is suitable for our purposes.

\begin{theorem}  {\normalfont (Bass, 1960) } \label{thm:Y} Let
 $Y$ be an infinite set such that $R$ is right $\vert Y  \vert$-Noether\-ian.
If $M$ is a finitely generated, torsionless, left
 $R$-module, then $M$ embeds in $R^Y$.
Hence, $R$ is a left $\Pi$-coherent ring if and only if $R^Y$ is a coherent left $R$-module.
\end{theorem}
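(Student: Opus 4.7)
The plan is to prove the first assertion (that any finitely generated torsionless left $R$-module $M$ embeds in $R^Y$) and then derive the ``hence'' clause quickly. For the first part, the strategy is to start with the tautological embedding of a torsionless module into a power of $R$ indexed by its dual, and then cut the index set down to size $|Y|$ using a finite-generation argument on $M$ combined with right $|Y|$-Noetherianness.

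In detail, set $M^* \coloneq \Hom_R(M,R)$ (left $R$-linear maps), which is naturally a right $R$-module. Since $M$ is torsionless, the evaluation map $M \to R^{M^*}$, $m \mapsto (f(m))_{f \in M^*}$, is left $R$-linear and injective. Now, because $M$ is finitely generated, say by $m_1, \dots, m_n$, restriction to this generating set gives an injective right $R$-module map $M^* \hookrightarrow R^n$, $f \mapsto (f(m_1), \dots, f(m_n))$. The goal is thus to find a family $(f_y)_{y \in Y}$ in $M^*$ of size $|Y|$ that still separates points of $M$, and then use this family to build an injection $M \hookrightarrow R^Y$.

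The key lemma needed along the way is that, under our hypothesis, every right $R$-submodule of $R^n$ is $|Y|$-generated. This is not quite the definition of right $|Y|$-Noetherian (which concerns only right ideals), so I would prove it by induction on $n$: using the split exact sequence $0 \to R^{n-1} \to R^n \to R \to 0$, a submodule $N \le R^n$ projects to a right ideal of $R$ (which is $|Y|$-generated by hypothesis) with kernel $N \cap R^{n-1}$ (which is $|Y|$-generated by induction), and the union of $|Y|$-many $|Y|$-indexed families is still $|Y|$-indexed since $|Y|$ is infinite. Applying this lemma to the submodule $M^* \subseteq R^n$ yields a generating family $(f_y : y \in Y)$ of $M^*$ as right $R$-module (padding with zeros if necessary). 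Then define $\varphi \colon M \to R^Y$ by $\varphi(m) = (f_y(m))_{y \in Y}$; this is a left $R$-module map, and its injectivity is immediate: if $f_y(m) = 0$ for all $y \in Y$, then for any $f = \sum_y f_y r_y \in M^*$ (finite sum) we get $f(m) = \sum_y f_y(m) r_y = 0$, so $m$ lies in $\bigcap_{f \in M^*} \ker f = 0$ by torsionlessness.

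For the ``hence'' clause, one direction is trivial: if $R$ is left $\Pi$-coherent then $R^Y$ is coherent by definition. Conversely, if $R^Y$ is coherent as left $R$-module, then any finitely generated, torsionless, left $R$-module $M$ embeds in $R^Y$ by the first assertion, hence is a finitely generated submodule of a coherent module, hence is finitely presented; this is exactly the characterization of left $\Pi$-coherence. The main obstacle I expect is clean bookkeeping of the left/right module structures (in particular, realizing $M^* \hookrightarrow R^n$ as a map of \emph{right} $R$-modules, so that the right-Noetherian hypothesis can be brought to bear); once that is straight, the remaining arguments are short.
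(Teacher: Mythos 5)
Your proof is correct and follows essentially the same route as the paper's: dualize to embed $M^* = \Hom_R(M,R)$ in $R^n$ as a right $R$-module, use right $\vert Y\vert$-Noetherianness to obtain a $\vert Y\vert$-indexed generating family of $M^*$, and then map $M \to R^Y$ by evaluation, with injectivity supplied by torsionlessness, and the ``hence'' clause handled exactly as you do. The only differences are presentational: the paper packages the final step as the composite $M \to M^{**} \hookrightarrow R^Y$ of the double-dual map with the dual of a surjection from a free module on $Y$, and it leaves implicit the lemma (which you rightly prove by induction on $n$) that every right $R$-submodule of $R^n$ is $\vert Y\vert$-generated.
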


\begin{proof}   Since $M$ is a finitely generated left $R$-module, on dualizing  we find
that the right $R$-module
\mbox{$M^* \coloneq \text{Hom}_R(M, \null_R R)$} embeds in a finitely generated, free, right $R$-module.
Since $R$ is right $\vert Y  \vert$-Noetherian, $M^*$ is $\vert Y  \vert$-generated.
On dualizing  we find that the left $R$-module
\mbox{$M^{**} \coloneq \text{Hom}_R(M^*, R_R)$} embeds in $  R^Y$.
Since $M$ is torsionless, the double-dual map $M \to M^{**}$ is injective, and
$M$ embeds in $  R^Y$.
\end{proof}

 \begin{corollary}\label{cor:Herbera} The following are equivalent.
\vspace{-2mm}
\begin{enumerate}[{\rm (a)}]
\setlength\itemsep{-2pt}
\item\label{it:HT1} $R$ is a left $\Pi$-coherent ring,  i.e. every finitely generated,
torsionless, left module is  finitely presented.
\item\label{it:HT2} $R$ is a left coherent ring  and, for all sets $X$ and $Y\mkern-3mu,$  $\mu_{X,Y}$
  is injective.
\item\label{it:HT3}  $R$ is a left coherent ring   and, for some infinite sets $X$ and $Y$
such that $R$ is left $\vert X\vert$-Noe\-ther\-ian
and right $\vert Y\vert$-Noe\-ther\-ian, $\mu_{X,Y}$    is injective.
\end{enumerate}
\end{corollary}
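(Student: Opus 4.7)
The plan is to reduce the corollary to the two previously established theorems. Condition \eqref{it:HT1} is just the assertion that $R^Y$ is coherent as a left $R$-module \emph{for every} set $Y$, so Theorem~\ref{thm:Good} already packages the equivalence of \eqref{it:HT1}, \eqref{it:HT2} and \eqref{it:HT3} \emph{for one fixed $Y$ at a time}. The role of Theorem~\ref{thm:Y} will be to glue the ``one fixed $Y$'' information into ``all $Y$'' by exhibiting a single sufficiently large $Y$ for which $R$ is right $|Y|$-Noetherian.

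For \eqref{it:HT1}$\Rightarrow$\eqref{it:HT2}, I would first note that $R$ embeds in $R^Y$ for any non-empty $Y$, so \eqref{it:HT1} forces $R$ itself to be left coherent; then for each $Y$, Theorem~\ref{thm:Good}~\eqref{it:Good11}$\Rightarrow$\eqref{it:Good12} yields injectivity of $\mu_{X,Y}$ for every $X$. The implication \eqref{it:HT2}$\Rightarrow$\eqref{it:HT3} is immediate: choose any infinite cardinal $\kappa\ge |R|$ and let $X=Y$ be a set of cardinality $\kappa$; by Definitions~\ref{defs:early}(6), $R$ is then both left and right $\kappa$-Noetherian.

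The key step is \eqref{it:HT3}$\Rightarrow$\eqref{it:HT1}, where both cardinality hypotheses in (c) do real work. Given the specified infinite sets $X$ (with $R$ left $|X|$-Noetherian) and $Y$ (with $R$ right $|Y|$-Noetherian), together with the injectivity of $\mu_{X,Y}$ and the left coherence of $R$, Theorem~\ref{thm:Good}~\eqref{it:Good13}$\Rightarrow$\eqref{it:Good11} yields that $R^Y$ is a coherent left $R$-module for this one particular $Y$. Then Theorem~\ref{thm:Y}, applied with the same $Y$, upgrades this to the assertion that $R$ is left $\Pi$-coherent, which is \eqref{it:HT1}. The main obstacle, if any, is simply keeping track of which cardinality hypothesis feeds which earlier result: the left $|X|$-Noetherian condition is what unlocks Theorem~\ref{thm:Good}~\eqref{it:Good13}$\Rightarrow$\eqref{it:Good11}, while the right $|Y|$-Noetherian condition is what unlocks the ``only if'' direction in Theorem~\ref{thm:Y}.
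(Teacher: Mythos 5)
Your proof is correct and follows essentially the same route as the paper, whose proof simply cites Theorems~\ref{thm:Good} and~\ref{thm:Y}; your expanded version allocates the two Noetherian hypotheses exactly as intended (left $\vert X\vert$-Noetherian for Theorem~\ref{thm:Good}\,\eqref{it:Good13}$\Rightarrow$\eqref{it:Good11}, right $\vert Y\vert$-Noetherian for Theorem~\ref{thm:Y}). Only a cosmetic slip: the half of Theorem~\ref{thm:Y} you actually use, namely that coherence of $R^Y$ implies left $\Pi$-coherence, is the ``if'' rather than the ``only if'' direction.
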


\begin{proof}   This follows from Theorems~\ref{thm:Good} and~\ref{thm:Y}.
\end{proof}

We now recall the aspect of~\cite[Theorem 2.11]{Jones} that is of interest to us.

\begin{theorem}\label{thm:equiv} {\normalfont (Jones, 1982)} The following are equivalent.
\vspace{-2mm}
\begin{enumerate}[{\rm (a)}]
\setlength\itemsep{-2pt}
\item \label{it:newz} $R$ is left and right semihereditary, and, for all sets $X$, $Y$, $\mu_{X,Y}$ is injective.
\item\label{it:newy}  $R$ is left and right coherent,  $\text{\normalfont w.gl.dim\,}R \le 1$,  and,
for all sets $X$, $Y$, $\mu_{X,Y}$ is injective.
\item \label{it:b} $R$ is left coherent, $\text{\normalfont w.gl.dim\,}R \le 1$, and $R$ is right $\Pi$-coherent.
\item\label{it:newx}  Every finitely generated, torsionless,
 right $R$-module is flat and finitely presented.
\item \label{it:newc} $R$ is $\Pi$-semihereditary, i.e. every finitely generated, torsionless,
 right $R$-module is projective.
\end{enumerate}\vspace{-2mm}
These conditions are also equivalent to their left-right duals.
\end{theorem}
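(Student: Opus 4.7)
The plan is to prove the cycle (a)$\Rightarrow$(b)$\Rightarrow$(c)$\Rightarrow$(d)$\Rightarrow$(e)$\Rightarrow$(a), using the preceding theorems essentially as black boxes, and then derive the left-right symmetry from the fact that conditions (a) and (b) are manifestly left-right symmetric.

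For (a)$\Rightarrow$(b), I apply Theorem~\ref{thm:BC} on each side: left (resp.\ right) semihereditary is equivalent to left (resp.\ right) coherent together with $\text{w.gl.dim}\,R \le 1$, and the clause on $\mu_{X,Y}$ carries over unchanged. For (b)$\Rightarrow$(c), I use that $\mu_{X,Y}$ is a bimodule map, so the hypothesis ``$\mu_{X,Y}$ is injective for all $X, Y$'' is left-right symmetric; feeding right coherence and this symmetric condition into the right-sided dual of Corollary~\ref{cor:Herbera}~\eqref{it:HT2}$\Rightarrow$\eqref{it:HT1}, one concludes that $R$ is right $\Pi$-coherent, which with the remaining parts of (b) gives (c).

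For (c)$\Rightarrow$(d), the hypotheses that $R$ is left coherent and $\text{w.gl.dim}\,R\le 1$ yield left semihereditary by Theorem~\ref{thm:BC}~\eqref{it:wd}$\Rightarrow$\eqref{it:sh}, whence every finitely generated torsionless right $R$-module is flat by the implication \eqref{it:sh}$\Rightarrow$\eqref{it:left2} of that theorem; and right $\Pi$-coherence provides finite presentation by definition. For (d)$\Rightarrow$(e), I apply Theorem~\ref{thm:Vil}: a finitely presented flat module is finitely generated projective. Finally, for (e)$\Rightarrow$(a): since a finitely generated projective module is flat and finitely presented, (e) implies (d), and the previous step in reverse recovers (c); from (c), $R$ is left semihereditary as noted, and right $\Pi$-coherent implies right coherent, which together with $\text{w.gl.dim}\,R\le 1$ gives right semihereditary by Theorem~\ref{thm:BC}. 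The injectivity of every $\mu_{X,Y}$ follows by applying the right-sided dual of Corollary~\ref{cor:Herbera}~\eqref{it:HT1}$\Rightarrow$\eqref{it:HT2} to the hypothesis of right $\Pi$-coherence. The left-right symmetry of the list is now immediate: each condition is equivalent to (a), which is manifestly symmetric.

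The main obstacle is bookkeeping rather than mathematics. Conditions (c), (d), and (e) are not manifestly left-right symmetric, and Theorem~\ref{thm:BC} and Corollary~\ref{cor:Herbera} are each formulated on only one side. I must invoke each in both its stated form and its right-sided dual; this is legitimate because passage from $R$ to $R^{\mathrm{op}}$ preserves the symmetric hypothesis that $\mu_{X,Y}$ is injective for all $X, Y$, and because Theorem~\ref{thm:BC} is essentially self-dual once one remembers that ``left semihereditary'' is characterized both by left-sided data (left coherent plus $\text{w.gl.dim}\le 1$) and by right-sided data (finitely generated torsionless right $R$-modules being flat).
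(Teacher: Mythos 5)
Your proof is correct and follows essentially the same route as the paper: the paper's proof is exactly the chain \eqref{it:newz}$\Leftrightarrow$\eqref{it:newy} by Theorem~\ref{thm:BC}, \eqref{it:newy}$\Leftrightarrow$\eqref{it:b} by (the right-sided dual of) Corollary~\ref{cor:Herbera}, \eqref{it:b}$\Leftrightarrow$\eqref{it:newx} by Theorem~\ref{thm:BC}, \eqref{it:newx}$\Leftrightarrow$\eqref{it:newc} by Theorem~\ref{thm:Vil}, with left-right symmetry read off from \eqref{it:newz}. Your arranging these as a one-way cycle rather than a chain of biconditionals, and your explicit remarks on dualizing and on the symmetry of the condition that all $\mu_{X,Y}$ be injective, are only presentational differences.
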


\begin{proof}  \eqref{it:newz} \mbox{$
\overset{\text{Theorem~\ref{thm:BC}}}
{\Longleftarrow\mkern-3mu=\mkern-3mu=\mkern-3mu=\mkern-3mu=\mkern-3mu\Longrightarrow}$}  \eqref{it:newy}
\mbox{$
\overset{\text{Corollary~\ref{cor:Herbera}}}
{\Longleftarrow\mkern-3mu=\mkern-3mu=\mkern-3mu=\mkern-3mu=\mkern-3mu=\mkern-3mu\Longrightarrow}$} \eqref{it:b}
\mbox{$
\overset{\text{Theorem~\ref{thm:BC}}}
{\Longleftarrow\mkern-3mu=\mkern-3mu=\mkern-3mu=\mkern-3mu=\mkern-3mu\Longrightarrow}$} \eqref{it:newx}
\mbox{$
\overset{\text{Theorem~\ref{thm:Vil}}}
{\Longleftarrow\mkern-3mu=\mkern-3mu=\mkern-3mu=\mkern-3mu=\mkern-3mu\Longrightarrow}$} \eqref{it:newc},
and
\eqref{it:newz}   is left-right symmetric.  \end{proof}

In the remainder of this section, we recall some types of rings that are $\Pi$-semihereditary.
We first present a  result from~\cite{Sahaev}.

  \begin{theorem} \label{thm:Sah}  {\normalfont (Sahaev, 1965)}
For each $n \in \naturals$  and each $y \in \null^n\mkern-3mu R^n$,
let $\overset{\null_\bullet}{y}$ denote \mbox{$ \mathbf{I}_n   {-}\, y$}.  The following are equivalent.
\vspace{-2mm}
\begin{enumerate}[{\rm (a)}]
\setlength\itemsep{-2pt}
\item\label{it:Sah1} There exists  a finitely generated,
non-finitely presented, flat, left $R$-module~$M$.
\item\label{it:Sah2} There exist   $n \in \naturals$ and a
function $\naturals \to \null^n\mkern-3mu R^n$, $k \mapsto y_k$,
such that,  for each $k \in \naturals$, \mbox{$y_k \overset{\null_\bullet}{y}_{k+1} \mkern-2mu =
\mathbf{0}_n  \ne y_{k+1}\overset{\null_\bullet}{y}_k$;}
equivalently, $y_ky_{k+1} = y_k$
and $y_{k+1}y_k \ne y_{k+1}$;
equivalently, \mbox{$\overset{\null_\bullet}{y}_k   \overset{\null_\bullet}{y}_{k+1}   =  \overset{\null_\bullet}{y}_{k+1} $},
and \mbox{$  \overset{\null_\bullet}{y}_{k+1}  \overset{\null_\bullet}{y}_k   \ne \overset{\null_\bullet}{y}_{k} $}.
\end{enumerate}
\end{theorem}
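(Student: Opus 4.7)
The plan is to prove both implications. The three reformulations of the condition in (b) are equivalent via straightforward computations using $\overset{\null_\bullet}{y}=\mathbf{I}_n-y$, so I will work throughout with the middle form: $y_ky_{k+1}=y_k$ and $y_{k+1}y_k\ne y_{k+1}$.

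For \eqref{it:Sah1}$\Rightarrow$\eqref{it:Sah2}: Suppose $M$ is finitely generated, flat, and not finitely presented. Writing \mbox{$M=R^n/K$} for some $n\in\naturals$, the non-finite-presentation of $M$ forces $K$ to be not finitely generated. I will construct the $y_k\in\null^n\mkern-3mu R^n$ recursively, ensuring at each stage that every row of $y_k$ lies in $K$. Start with $y_0=\mathbf{0}_n$. Given $y_k$, let $L(y_k)$ denote the left $R$-submodule of $R^n$ generated by the rows of $y_k$; then $L(y_k)$ is finitely generated, so, since $K$ is not, I can pick some \mbox{$v\in K\setminus L(y_k)$}. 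Apply Theorem~\ref{thm:flat}\,\mbox{\eqref{it:Flat1}$\Rightarrow$\eqref{it:Flat3}} to the $(n{+}1)\times n$ matrix $z$ whose first $n$ rows are $y_k$ and whose last row is $v$: there exists $y_{k+1}\in\null^n\mkern-3mu R^n$ with rows in $K$ such that $z=zy_{k+1}$, i.e. $y_k=y_ky_{k+1}$ and $v=vy_{k+1}$. The remaining condition $y_{k+1}\ne y_{k+1}y_k$ follows automatically: if instead $y_{k+1}=y_{k+1}y_k$, then $v=vy_{k+1}=vy_{k+1}y_k=vy_k\in L(y_k)$, contradicting the choice of $v$.

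For \eqref{it:Sah2}$\Rightarrow$\eqref{it:Sah1}: The relation $y_ky_{k+1}=y_k$ immediately gives $L(y_k)\subseteq L(y_{k+1})$, and a short induction on $l-N$ based on $y_ly_{l+1}=y_l$ yields $y_Ny_l=y_N$ for all $l\ge N$. I set $K\coloneq\bigcup_{k\in\naturals}L(y_k)\subseteq R^n$ and \mbox{$M\coloneq R^n/K$}; then $M$ is finitely generated as a quotient of $R^n$. I verify flatness of $M$ via Theorem~\ref{thm:flat}\,\mbox{\eqref{it:Flat3}$\Rightarrow$\eqref{it:Flat1}}: given $x\in\null^\ell\mkern-3mu R^n$ with rows in $K$, all rows lie in some $L(y_N)$, so $x=Ay_N$ for some $A\in\null^\ell\mkern-3mu R^n$; the element $e\coloneq y_{N+1}$ has rows in $L(y_{N+1})\subseteq K$ and satisfies $xe=Ay_Ny_{N+1}=Ay_N=x$, as required.

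The main obstacle is to prove that $K$ is not finitely generated (and hence $M$ is not finitely presented). If $K$ were finitely generated, then, by absorption into the ascending chain $(L(y_k))$, one would have $K=L(y_N)$ for some $N$. The rows of $y_{N+1}$ and $y_{N+2}$ then all lie in $L(y_N)$, so $y_{N+1}=By_N$ and $y_{N+2}=Cy_N$ for some $B,C\in\null^n\mkern-3mu R^n$. Substituting the first expression into $y_Ny_{N+1}=y_N$ produces the key identity $y_NBy_N=y_N$, whence
\[
y_{N+2}y_{N+1}=(Cy_N)(By_N)=C(y_NBy_N)=Cy_N=y_{N+2},
\]
contradicting the hypothesis $y_{N+2}y_{N+1}\ne y_{N+2}$. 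Once $K$ is known to be non-finitely-generated, $M$ cannot be finitely presented either: if it were, Theorem~\ref{thm:Vil}\,\mbox{\eqref{it:Vilb}$\Rightarrow$\eqref{it:Vila}} would make $M$ projective, forcing $K$ to be a direct summand of $R^n$ and hence finitely generated. The cleverness is that the contradiction requires one to look \emph{one step beyond} $N$: the conflict is witnessed by the pair $(y_{N+1},y_{N+2})$ together with the absorption identity coming from $y_N$, not by $(y_N,y_{N+1})$ alone.
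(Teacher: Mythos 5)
Your proof is correct and follows essentially the same route as the paper's: both implications rest on Theorem~\ref{thm:flat}, with the same recursive construction of $y_{k+1}$ from an augmented matrix (your $z$ is the paper's $x_k$) in \eqref{it:Sah1}$\Rightarrow$\eqref{it:Sah2}, and the same chain $K=\bigcup_k L(y_k)$ with $e=y_{k+1}$ witnessing flatness in \eqref{it:Sah2}$\Rightarrow$\eqref{it:Sah1}. The only cosmetic difference is in certifying that $K$ is not finitely generated: the paper notes $Fy_k\overset{\null_\bullet}{y}_{k+1}=\{0\}\ne Fy_{k+2}\overset{\null_\bullet}{y}_{k+1}$, so the chain is non-stationary, whereas you reach the same contradiction via the factorizations $y_{N+1}=By_N$, $y_{N+2}=Cy_N$.
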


\begin{proof}
\eqref{it:Sah1}$\Rightarrow$ \eqref{it:Sah2}.
Since $M$ is finitely generated and not finitely presented, there exist $n \in \naturals$
and a non-finitely generated left $R$-submodule $K$ of $ R^n$ such that \mbox{$M = R^n\mkern-3mu/K$}.
We view  $R^n$  as an $(R, \null^n\mkern-3mu R^n)$-bimodule.

We shall recursively construct a   sequence  \mbox{$(y_k   : k \in \naturals)$} in
$ \null^n\mkern-2mu K \subseteq  \null^n\mkern-3mu R^n$.
Set
\mbox{$y_0 \coloneq \mathbf{0}_n \in   \null^n\mkern-2mu K$}.
We may now suppose that $k \in \naturals$ and
$y_k \in   \null^n\mkern-2mu K$.
Since $K$ is not finitely generated, it is easy to see that there
exists some    $x_{k} \in \null^{n+1}\mkern-2mu K\subseteq  \null^{n+1}\mkern-3mu R^n$  such that
$(R^{n})y_k   \subset (R^{n+1})x_{k} \subseteq K$.
By Theorem~\ref{thm:flat} \eqref{it:Flat1}$\Rightarrow$\eqref{it:Flat3},
since $M$ is flat, $x_k \in x_k (\null^n\mkern-2mu K)$, that is,
 there exists some  \mbox{$y_{k+1} \in   \null^n\mkern-2mu K$}
such that $x_{k} = x_{k} y_{k+1}$.  Hence, $x_{k}\overset{\null_\bullet}{y}_{k+1}  = \mathbf{0}_n$.
Now,  \mbox{$(R^{n})y_k \overset{\null_\bullet}{y}_{k+1}
 \subseteq    (R^{n+1})x_{k}\overset{\null_\bullet}{y}_{k+1}  = \{0\}$}, and we see that
 $y_k \overset{\null_\bullet}{y}_{k+1}   = \mathbf{0}_n$.

Also, $(R^{n})y_{k+1} y_k  \subseteq (R^{n})y_k   \subset (R^{n+1})x_{k} =
(R^{n+1})x_{k} y_{k+1}  \subseteq (R^{n})  y_{k+1},$
and we see that $y_{k+1} y_k   \ne y_{k+1}$.  Thus, $y_{k+1} \overset{\null_\bullet}{y}_k     \ne \mathbf{0}_n$.

This completes the recursive construction of the
sequence, and \eqref{it:Sah2} holds.

\eqref{it:Sah2}$\Rightarrow$ \eqref{it:Sah1}.
 Let $F$ denote the
 $(R, \null^n\mkern-3mu R^n)$-bimodule  $R^n$.
Then
$(Fy_k: k \in \naturals)$ is a family of left $R$-submodules of $F$.
For each $k \in \naturals$,  $Fy_k = Fy_ky_{k+1} \subseteq Fy_{k+1}$;
also $Fy_{k+2}  \overset{\null_\bullet}{y}_{k+1}   \ne \{0\} = Fy_k  \overset{\null_\bullet}{y}_{k+1} $,
and, hence,  $Fy_{k+2} \ne Fy_{k}$.
Thus,     $(Fy_{k}: k \in \naturals\,)$ is a non-stationary,
ascending chain of left $R$-submodules of $F$.

  Set
  $K \coloneq   \bigcup_{k =0}^\infty (Fy_{k}) $  and
$M \coloneq F/K$.  Then $M$ is a finitely generated, non-finitely presented,  left $R$-module.
 To prove \eqref{it:Sah1}, it suffices
to show that $M$ is left $R$-flat, and here we may apply Theorem~\ref{thm:flat}
 \eqref{it:Flat3}$\Rightarrow$ \eqref{it:Flat1}.
Let $\ell  \in \naturals$ and $x \in \null^\ell\mkern-2mu K$;
it remains to show that
 \mbox{$x  \in x (\null^n\mkern-2mu K)$}.
As  \mbox{$x \in \null^\ell \mkern-2mu K
 =  \bigcup_{k =0}^\infty (\null^\ell\mkern-2mu (Fy_{k}))$}, there exists some $k \in \naturals$ such that
$x   \in \null^\ell \mkern-1mu (Fy_{k})$. Then $x  \overset{\null_\bullet}{y}_{k+1}   = 0$.
Since each row of $y_{k+1}$ lies in~$K$,  $y_{k+1} \in \null^n\mkern-2mu K $.
Now,   $x  = x y_{k+1} \in  x   ( \null^n\mkern-2mu K)$, as desired.
\end{proof}

\begin{definitions}
(1) We shall say that $R$ is a \textit{left Sahaev ring} if
every finitely generated, flat, left $R$-module is finitely presented,
or, equivalently, projective, by Theorem~\ref{thm:Vil}.
The   term    `\hskip.1mm left S-ring'  is sometimes used in the literature.

Every subring of a left  Sahaev ring is again a left  Sahaev ring, since
these are the rings for which the condition in Theorem~\ref{thm:Sah}\eqref{it:Sah2} fails.
In particular, rings that embed in skew fields are left and right Sahaev rings.

It is clear from the foregoing definition that left Noetherian rings are left Sahaev rings.
Both Sahaev~\cite{Sahaev} and
S.\,\,J\o n\-drup~\cite[Lemma~1.4]{Jondrup3} discuss
connections between the condition in Theorem~\ref{thm:Sah}\eqref{it:Sah2}
and various chain conditions.

(2) A \textit{commutative Pr\"ufer domain} is a semihereditary, commutative domain.
 A \textit{commutative B\'ezout domain} is a commutative domain in which
every finitely generated ideal is principal.

(3) One says that $R$ is a \textit{semifir} if every finitely generated left ideal of $R$ is free of unique rank,
as left $R$-module, and similarly on the right.  It is sufficient to assume
the condition on the left; see~\cite[Theorem~I.1.1]{Cohn85}.
\end{definitions}

We now present~\cite[Corollary 2]{Jondrup3}.

 \begin{theorem} \label{thm:jondrup}{\normalfont(J\o ndrup, 1971)}  The following hold.
\vspace{-2mm}
\begin{enumerate}[{\rm (i)}]
\setlength\itemsep{-2pt}
\item \label{it:jon1} If $R$ is right semihereditary and  not left Sahaev, then $R$ is not right Sahaev.
\item \label{it:jon2}   If  \,$\text{\normalfont{w.gl.dim\,}}R \le 1$   and $R$~is   right Sahaev,
 then $R$ is  left Sahaev  and   $\Pi$-semi\-hereditary.
\end{enumerate}
\end{theorem}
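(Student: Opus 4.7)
The plan is to derive (ii) as a clean consequence of (i) combined with the preliminaries, and to handle (i) by extracting Sahaev matrices and then exploiting the projective splittings that right semihereditariness supplies.

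\textbf{Part (ii).} Assume $\text{w.gl.dim}\,R\le 1$ and that $R$ is right Sahaev. Every finitely generated right ideal of $R$ is flat (by the weak-dimension hypothesis) and therefore finitely presented (by right Sahaev), so $R$ is right coherent; combined with the weak-dimension hypothesis, the right-sided form of Theorem~\ref{thm:BC} yields that $R$ is right semihereditary. The contrapositive of (i) then forces $R$ to be left Sahaev. Running the argument symmetrically, every finitely generated left ideal is flat (weak-dimension) and finitely presented (left Sahaev), hence projective by Theorem~\ref{thm:Vil}, so $R$ is left semihereditary. For the final assertion I appeal to condition (c) of Theorem~\ref{thm:equiv}: $R$ is left coherent (implied by left semihereditary), $\text{w.gl.dim}\,R\le 1$ (given), and right $\Pi$-coherent, the last because every finitely generated, torsionless, right module is flat by Theorem~\ref{thm:BC} (using left semihereditary) and then finitely presented by right Sahaev.

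\textbf{Part (i), strategy.} I proceed by contrapositive: assume $R$ is right semihereditary and not left Sahaev, and produce a witness that $R$ is not right Sahaev. By Theorem~\ref{thm:Sah} fix $n\in\naturals$ and $(y_k)$ in $\null^n R^n$ satisfying $y_ky_{k+1}=y_k$ and $y_{k+1}y_k\neq y_{k+1}$. A short induction using the first relation gives $y_iy_j=y_i$ for all $j>i$, so the left annihilators $L_k\coloneq\{r\in R^n: ry_k=r\}$ form an ascending chain of left submodules of $R^n$. Each row of $y_{k+1}$ lies in $L_{k+2}$ (since $y_{k+1}y_{k+2}=y_{k+1}$), while the hypothesis $y_{k+1}y_k\neq y_{k+1}$ forces at least one such row out of $L_k$; hence $L_{k+2}\supsetneq L_k$ for every $k$, and the chain is non-stationary.

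The right semihereditary hypothesis now enters decisively by supplying projective splittings on the right. For each $k$ the right-$R$-linear map $\overset{\null_\bullet}{y}_k\colon\null^n R\to\null^n R$ (left multiplication) has image $\overset{\null_\bullet}{y}_k\null^n R$, a finitely generated right submodule of $\null^n R$ and therefore projective; the sequence $0\to\ker\overset{\null_\bullet}{y}_k\to\null^n R\to\overset{\null_\bullet}{y}_k\null^n R\to 0$ thus splits. My plan is to select such splittings across $k$ compatibly and package the resulting idempotent data into a new sequence of matrices $(z_k)$ in some $\null^m R^m$ realising the right-sided Sahaev relations $z_{k+1}z_k=z_k$ and $z_kz_{k+1}\neq z_{k+1}$. \textbf{The main obstacle} is precisely this last packaging step: arranging that the matrix identities $z_{k+1}z_k=z_k$ hold exactly, while the non-vanishings $z_kz_{k+1}\neq z_{k+1}$ inherit from the non-stationarity of $(L_k)$ already established. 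Right semihereditary is indispensable here, because without the projective splittings of $\overset{\null_\bullet}{y}_k\null^n R$ there is no mechanism to convert the left-sided annihilator data into right-sided matrix identities; Sahaev failure is not a left-right symmetric property in general, and it is only the semihereditariness hypothesis that makes the transfer possible.
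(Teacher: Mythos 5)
Your part (ii) is correct and is essentially the paper's argument: flat plus right Sahaev gives right semihereditary, the contrapositive of (i) gives left Sahaev and hence left semihereditary, and then Theorem~\ref{thm:BC} \eqref{it:sh}$\Rightarrow$\eqref{it:left2} (on the left side) plus right Sahaev finishes; routing the conclusion through Theorem~\ref{thm:equiv}\eqref{it:b} rather than stating projectivity directly is an immaterial variation.

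Part (i), however, has a genuine gap, and it sits exactly where you say the ``main obstacle'' is: you never carry out the step that converts the non-stationary chain into a witness that $R$ is not right Sahaev. Producing a sequence $(z_k)$ of matrices satisfying the right-sided Sahaev relations is not something you do; it is a declared plan, and nothing in your setup shows how compatible splittings could be chosen or how the exact identities $z_{k+1}z_k=z_k$ would be arranged. Since (i) is precisely the assertion that such a transfer from left to right is possible, the proposal proves nothing beyond the preliminary chain computation. Moreover, the chain you build (the left row-modules $L_k=\{r\in R^m : ry_k=r\}$) lives on the wrong side for the semihereditary hypothesis to bite: the projective splittings you invoke concern finitely generated \emph{right} submodules of $\null^n\mkern-2mu R$, and you give no mechanism linking them to your ascending chain of \emph{left} modules.

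The paper's proof avoids the matrix-packaging problem altogether: it never constructs a right-sided Sahaev sequence, but instead builds the witness module directly. View each $y_k$ as an endomorphism of the right module $\null^n\mkern-2mu R$ (columns) acting by left multiplication, take $y_0=\mathbf{0}_n$, and observe that $\operatorname{Ker}(y_{k+1})\subseteq\operatorname{Im}(\mathbf{I}_n-y_{k+1})\subseteq\operatorname{Ker}(y_k)$ while $\operatorname{Ker}(y_{k+2})\ne\operatorname{Ker}(y_k)$, so the kernels form a descending chain, strict at even indices. Right semihereditariness makes $\operatorname{Im}(y_{2k+2})$ projective, hence $\operatorname{Ker}(y_{2k+2})$ is a summand of $\null^n\mkern-2mu R$ and therefore of $\operatorname{Ker}(y_{2k})$, giving $\operatorname{Ker}(y_{2k})=\operatorname{Ker}(y_{2k+2})\oplus L_k$ with $L_k\ne\{0\}$ and $\null^n\mkern-2mu R=\operatorname{Ker}(y_{2k+2})\oplus\bigl(\bigoplus_{i=0}^{k}L_i\bigr)$. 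Each quotient $\null^n\mkern-2mu R/\bigl(\bigoplus_{i=0}^{k}L_i\bigr)$ is projective, so the direct limit $\null^n\mkern-2mu R/\bigl(\bigoplus_{i=0}^{\infty}L_i\bigr)$ is a finitely generated, flat right module whose relation module is an infinite direct sum of non-zero modules, hence not finitely presented; by definition $R$ is not right Sahaev. If you want to salvage your outline, replace the $(z_k)$-packaging step by this direct-limit construction on the column side.
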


\begin{proof} \eqref{it:jon1} We apply Theorem~\ref{thm:Sah} \eqref{it:Sah1}$\Rightarrow$\eqref{it:Sah2}, and
we change $y_0$ to $\mathbf{0}_n$.
 We shall speak of the
kernel  and image  of each element  of $\null^n\mkern-2mu R^n$ viewed as an $R$-endomorphism
of $\null^n\mkern-2mu R$ acting on the left by
matrix multiplication.

Let $k \in \naturals$.
Since \mbox{$y_{k+2} \overset{\null_\bullet}{y}_{k+1}   \ne \mathbf{0}_n = y_{k} \overset{\null_\bullet}{y}_{k+1} $},
  \mbox{$\operatorname{Ker}(y_{k+2}) \not\supseteq
\operatorname{Im}( \overset{\null_\bullet}{y}_{k+1} )
  \subseteq \operatorname{Ker}(y_{k})$}.  Hence,
$\operatorname{Ker}(y_{k+2}) \ne \operatorname{Ker}(y_k)$.
Clearly $\overset{\null_\bullet}{y}_{k+1} $ acts as the identity on $ \operatorname{Ker}(y_{k+1})$, and we see that
\mbox{$ \operatorname{Ker}(y_{k+1})  \subseteq
 \operatorname{Im}( \overset{\null_\bullet}{y}_{k+1} )
 \subseteq \operatorname{Ker}(y_k)$}.
Hence,  $\operatorname{Ker}(y_{2k+2}) \subset \operatorname{Ker}(y_{2k})$.
Since $R$ is right semihereditary, $\operatorname{Im}(y_{2k+2})$ is a projective
$R$-submodule of $\null^n\mkern-2mu R$, and, hence, $\operatorname{Ker}(y_{2k+2})$ is an
$R$-summand of $\null^n\mkern-2mu R$,
and, therefore, of $\operatorname{Ker}(y_{2k})$.
Thus, \mbox{$\operatorname{Ker}(y_{2k}) =  \operatorname{Ker}(y_{2k+2}) \oplus L_k$}
for some   $L_k \ne \{0\}$. Then,\vspace{-2mm}
$$\null^n\mkern-2mu R = \operatorname{Ker}(y_{0})
= \operatorname{Ker}(y_{2k+2}) \oplus (\textstyle\bigoplus\limits_{i=0}^k L_i).\vspace{-2mm}$$ Each term of the
directed system
  $(\null^n\mkern-2mu R/(\bigoplus_{i=0}^k L_i) : k\in \naturals)$\vspace{1mm} is then $R$-projective.
On taking the direct limit,
we see that $ \null^n\mkern-2mu R/(\bigoplus_{i=0}^\infty L_i)$ is finitely generated, not finitely presented,
and flat.  Hence, $R$~is not right Sahaev.

\eqref{it:jon2} As \mbox{$\text{\normalfont{w.gl.dim\,}}R \le 1$}, every finitely generated
right ideal of $R$ is flat, and
hence  projective, since $R$ is right Sahaev.   Thus, $R$ is right semihereditary.   By
the contrapositive of \eqref{it:jon1}, $R$ is left Sahaev.  Hence, $R$ is left semihereditary.
By Theorem~\ref{thm:BC}  \eqref{it:sh}$\Rightarrow$\eqref{it:left2},
every finitely generated, torsionless, right $R$-module is flat, and
hence  projective, since $R$ is right Sahaev.
\end{proof}

\begin{corollary}\label{cor:JondrupMain} The following hold.
\vspace{-2mm}
\begin{enumerate}[{\rm (i)}]
\setlength\itemsep{-2pt}
\item\label{it:J1} If  \,$\text{\normalfont{w.gl.dim\,}}R \le 1$ $($e.g.\,\,$R$ is left or right
$($\hskip-.5mm semi$)$\hskip-.5mm hereditary$)$ and $R$~is  left or right Sahaev
$($e.g.\,\,$R$ is left or right Noetherian or $R$ embeds in a skew field\,$)$, then $R$ is
  $\Pi$-semi\-hereditary.

\item \label{it:J2}  If $R$ is a commutative principal ideal domain, or more generally,
a commutative B\'ezout domain, or, even more generally, a commutative Pr\"ufer domain, then
$R$ is $\Pi$-semihereditary. \hfill\qed
\end{enumerate}
\end{corollary}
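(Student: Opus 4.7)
My plan is to derive both parts of the corollary essentially as bookkeeping from Theorem~\ref{thm:jondrup}\eqref{it:jon2}, together with the left-right symmetry of $\Pi$-semihereditariness established in Theorem~\ref{thm:equiv}, and the basic hereditary facts used to check the examples.

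For part~\eqref{it:J1}, the plan is as follows. Assume $\text{w.gl.dim}\,R\le 1$. If $R$~is right Sahaev, then Theorem~\ref{thm:jondrup}\eqref{it:jon2} directly yields that $R$ is $\Pi$-semihereditary. If instead $R$ is left Sahaev, I would invoke the left-right dual of Theorem~\ref{thm:jondrup}\eqref{it:jon2}; although the dual produces ``right $\Pi$-semihereditary'' in the dual sense, this is the same condition by the left-right symmetry recorded in Theorem~\ref{thm:equiv}. So in either case the conclusion follows. To verify the parenthesised examples, I need: (a)~left or right (semi)hereditary implies $\text{w.gl.dim}\,R\le 1$, which is immediate since a hereditary ring is semihereditary, and Theorem~\ref{thm:BC}\eqref{it:sh}$\Rightarrow$\eqref{it:wd} gives $\text{w.gl.dim}\,R\le 1$ for any left or right semihereditary ring; (b)~left or right Noetherian rings are Sahaev on the relevant side, which is trivial since over a Noetherian ring every finitely generated module is finitely presented; and (c)~if $R$ embeds in a skew field then $R$ is both left and right Sahaev, which is the observation recorded just after the definition of Sahaev ring, namely that the Sahaev property passes to subrings because condition~\eqref{it:Sah2} of Theorem~\ref{thm:Sah} is inherited.

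For part~\eqref{it:J2}, I would simply unwind definitions and reduce to~\eqref{it:J1}. A commutative Pr\"ufer domain is, by definition, a semihereditary commutative domain, so $\text{w.gl.dim}\,R\le 1$ by Theorem~\ref{thm:BC}; and being a commutative domain, $R$~embeds in its field of fractions, hence is Sahaev. So~\eqref{it:J1} gives that $R$ is $\Pi$-semihereditary. Commutative B\'ezout domains are Pr\"ufer because every finitely generated ideal is principal and therefore free of rank~$1$ (hence projective); and commutative PIDs are B\'ezout domains.

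There is no serious obstacle here: the hard work lives in Theorem~\ref{thm:jondrup} and Theorem~\ref{thm:equiv}, which have already been proved. The only point to be careful about is the move from ``right Sahaev'' to ``left Sahaev'' in part~\eqref{it:J1}, which requires explicit appeal to the left-right symmetry of the $\Pi$-semihereditary condition, so that the dual form of Theorem~\ref{thm:jondrup}\eqref{it:jon2} yields the ungarnished statement ``$R$ is $\Pi$-semihereditary'' rather than its formal dual.
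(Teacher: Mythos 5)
Your proposal is correct and follows essentially the same route as the paper, which likewise proves \eqref{it:J1} by citing Theorem~\ref{thm:jondrup}\eqref{it:jon2} and its left-right dual (with the symmetry of the $\Pi$-semihereditary condition from Theorem~\ref{thm:equiv} implicit) and obtains \eqref{it:J2} as the special case where $R$ is a commutative semihereditary domain, hence embeds in a field and is Sahaev. Your explicit verification of the parenthesised examples is just a slightly more detailed write-up of the same argument.
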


\begin{proof} \eqref{it:J1} is Theorem~\ref{thm:jondrup}\eqref{it:jon2} and its left-right dual,
while \eqref{it:J2} gives some cases of~\eqref{it:J1} where $R$ is semihereditary and embeds in a field.
\end{proof}

The following is~\cite[Theorem~2B]{Jensen}, attributed to Cohn.

 \begin{theorem}\label{thm:ex4}  {\normalfont (Cohn, 1969)}   Semifirs are
left and right semihereditary,  left and right Sahaev rings.  Hence, semifirs are
 $\Pi$-semihereditary.
\end{theorem}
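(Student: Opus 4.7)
The plan is to deduce this directly from the results already assembled in Section~\ref{sec:back}, with only one external input, namely Cohn's theorem that every semifir embeds in a (universal) skew field of fractions.

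First I would verify the semihereditary part. By the definition of semifir, every finitely generated left ideal of $R$ is free, and, in particular, projective, so $R$ is left semihereditary; the symmetric definition (or the fact, noted in the paper after the definition of semifir, that the left condition is already enough) gives right semihereditary as well. By Theorem~\ref{thm:BC}~\eqref{it:sh}$\Rightarrow$\eqref{it:wd}, this yields $\text{w.gl.dim\,}R \le 1$.

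Next I would establish the Sahaev property. Cohn showed (see \cite[Corollary 7.5.11]{Cohn85} or the original references) that every semifir~$R$ embeds in a skew field~$D$, its universal field of fractions; I would simply cite this. As recalled after Theorem~\ref{thm:Sah}, the class of left Sahaev rings is closed under passage to subrings (since the obstruction in Theorem~\ref{thm:Sah}\eqref{it:Sah2} passes from a subring to an overring), and skew fields are trivially left and right Sahaev because every module over a skew field is free. Hence~$R$ is both a left and a right Sahaev ring.

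Finally, combining $\text{w.gl.dim\,}R \le 1$ with the Sahaev property, Corollary~\ref{cor:JondrupMain}\eqref{it:J1} immediately gives that $R$ is $\Pi$-semihereditary, completing the proof. The main conceptual obstacle is the embedding of a semifir in a skew field, which is a deep theorem of Cohn; fortunately, the paper is content to cite this, and once it is in hand all the remaining steps are short invocations of previously recorded results.
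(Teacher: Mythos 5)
Your argument is correct, but it follows a genuinely different route from the paper's. You get the Sahaev property by invoking Cohn's theorem that every semifir embeds in its universal skew field of fractions, and then using the facts (recorded after Theorem~\ref{thm:Sah}) that skew fields are Sahaev and that subrings of left Sahaev rings are left Sahaev; combined with \mbox{$\text{w.gl.dim}\,R \le 1$} from semiheredity, Corollary~\ref{cor:JondrupMain}\eqref{it:J1} finishes the proof. The paper instead gives a self-contained argument (after J\o ndrup): it observes that the construction in the proof of Theorem~\ref{thm:jondrup}\eqref{it:jon1} shows that a right semihereditary ring which is \emph{not} left Sahaev admits, for some $n$, a decomposition of $\null^n\mkern-2mu R$ as a direct sum of $n{+}1$ non-zero right submodules; over a semifir each such summand is finitely generated and free of rank at least $1$, so unique rank is violated, whence semifirs are left (and right) Sahaev, and the same Corollary~\ref{cor:JondrupMain}\eqref{it:J1} applies. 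The trade-off: your proof is shorter modulo its external input, but that input (the existence of a universal field of fractions of a semifir) is a deep theorem of Cohn, far heavier than anything else in Section~\ref{sec:back}, and it is not actually cited in the paper; the paper's route stays entirely within the elementary matrix-level machinery already developed (Theorems~\ref{thm:Sah} and~\ref{thm:jondrup}), which fits its stated aim of minimizing background requirements. Both arguments are valid, and yours has the side benefit of recording the stronger fact that semifirs embed in skew fields, but if you want a proof in the spirit of the paper you should extract the direct-sum contradiction from the proof of Theorem~\ref{thm:jondrup}\eqref{it:jon1} rather than appeal to the embedding theorem.
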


\begin{proof}[Proof {\normalfont (after J\o ndrup~\cite[Corollary 3]{Jondrup3})}]  It is clear that semifirs are
left and right semihereditary.  From the proof of Theorem~\ref{thm:jondrup}\eqref{it:jon1},  it may be seen that if
$R$ is right semihereditary and not left Sahaev, then, for some $n \in \naturals$,
$\null^n\mkern-2mu R$ is a direct sum of $n{+}1$ non-zero right $R$-submodules; this
cannot happen if $R$ is a semifir.
Hence semifirs are  left (and right) Sahaev  rings, and Corollary~\ref{cor:JondrupMain}\eqref{it:J1} applies.
\end{proof}

We now turn to the final topic of this section,
self-injective rings; for  background on
 injective modules, see~\cite[pp.\,65--68]{Rotman}.

At the beginning of the proof of~\cite[Theorem~2 (c)$\Rightarrow$(a)]{Goodearl72},
Goodearl notes the consequence of~\cite[Theorem~2.1 (a)$\Rightarrow$(b)]{Cateforis}
 that, over a right self-injective, von Neumann regular ring, every torsionless right module is
projective.  This seems to have been one of the very earliest
 occasions on which
the projectivity of finitely generated torsionless modules was considered.

 \begin{theorem} \label{thm:Cateforis} {\normalfont (Cateforis, 1969)}
If $R$ is right semihereditary and right self-injec\-tive, then
$R$ is   $\Pi$-semihereditary.
\end{theorem}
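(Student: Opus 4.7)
The plan is to reduce the problem to a finite-presentation question and then apply classical structure theory. By Theorem~\ref{thm:BC}, since $R$ is right semihereditary, every finitely generated, torsionless, right $R$-module $M$ is flat; by the right-module version of Theorem~\ref{thm:Vil}, such an $M$ is projective if and only if it is finitely presented. So I reduce to proving: every finitely generated, torsionless, right $R$-module is finitely presented. Fixing a presentation $0 \to K \to R^n \to M \to 0$, this amounts to showing that $K$ is finitely generated.

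Next, using torsionlessness, I embed $M$ in some $R^X$, whence $K$ is the kernel of a right $R$-module map $R^n \to R^X$ described by its coordinate components $\phi_x \colon R^n \to R$ ($x \in X$), so $K = \bigcap_{x \in X} \ker \phi_x$. The decisive step is to prove that the left $R$-submodule $L \subseteq \Hom_R(R^n, R)$ consisting of those right $R$-module maps $R^n \to R$ that vanish on $K$ is finitely generated. Granted this, writing $L = R\psi_1 + \cdots + R\psi_k$ presents $K$ as the kernel of the map $R^n \to R^k$, $r \mapsto (\psi_j(r))_j$; right coherence (a consequence of right semihereditary) then forces the image to be finitely presented, so $K$ is finitely generated.

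The hard part will be showing that $L$ is finitely generated, and this is where right self-injectivity must be used essentially. The route I would try first is to show that the combined hypotheses force $R$ to be von Neumann regular and then invoke the Goodearl-Cateforis observation quoted just before the theorem. Right semihereditary implies that $R$ is right nonsingular: the right annihilator of any $a \in R$ equals the kernel of $R \to aR$, and since $aR$ is a finitely generated, hence projective, right ideal, this kernel is a direct summand $eR$ of $R$, which is never essential when $a \ne 0$. The classical Johnson-Utumi theorem (right self-injective plus right nonsingular implies von Neumann regular) then applies, and the Goodearl-Cateforis observation yields projectivity of $M$, completing the proof. If a proof strictly internal to the preceding sections is preferred, one would instead work directly with Baer's criterion: right self-injectivity implies that every finitely generated left $R$-submodule of $\Hom_R(R^n, R)$ equals its own double annihilator, and combining this with the constraints on right $R$-submodules of $R^n$ coming from right semihereditary should force $L$ to be finitely generated. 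That more direct approach, however, would likely be the technical heart of the matter.
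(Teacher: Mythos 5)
Your reduction is fine as far as it goes: by Theorem~\ref{thm:BC} a finitely generated torsionless right module $M$ is flat, by Theorem~\ref{thm:Vil} it is then projective iff finitely presented, and your argument that finite generation of the left module $L=\{\psi\in\Hom_R(R^n,R): \psi(K)=0\}$ forces $K$ to be finitely generated (via the intersection-of-kernels description of $K$ and right coherence) is correct. The genuine gap is that the decisive step --- that right self-injectivity forces $L$ to be finitely generated, equivalently that $M$ is finitely presented --- is never proved. Your first route does not supply it: the ``Goodearl--Cateforis observation'' quoted just before the theorem \emph{is} Cateforis' theorem, i.e.\ the statement under proof (in its von Neumann regular form, which is the general case here, since right semihereditary plus right self-injective already gives regularity by the short argument of Remarks~\ref{rems:vnr}, with no need for Johnson--Utumi). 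Reducing to the regular case and then citing that observation merely re-cites the result; the whole point of the section is to supply its proof. Your second route is explicitly left as a hope (``should force $L$ to be finitely generated \dots would likely be the technical heart of the matter''), so the heart of the theorem is missing from the attempt.

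For comparison, the paper argues directly, with no finite-presentation reduction at all: write $M=F/K$ with $F$ finitely generated free, hence right $R$-injective by self-injectivity; choose by Zorn's lemma a submodule $L\subseteq F$ maximal with $L\cap K=\{0\}$; injectivity of $F$ produces $\phi'\colon F\to F$ vanishing on $L$ and restricting to the identity on $K$. If $\operatorname{Im}(\phi')=K$, then $K$ is a direct summand of $F$ and $M$ is projective. Otherwise, torsionlessness of $M$ yields a functional $\psi\colon F\to R$ vanishing on $K\oplus L$ but not on all of $F$; its image is a finitely generated, hence projective, right ideal (semihereditary), so $\operatorname{Ker}(\psi)$ is a proper direct summand of $F$, and the nonzero complement $L'$ gives $K\oplus L\oplus L'\subseteq F$, contradicting the maximality of $L$. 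Some argument of this substance (or a genuine proof of your double-annihilator claim for $L$) is what your proposal still needs.
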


\begin{proof}
By definition, $R$ is $\Pi$-semihereditary if each finitely generated, torsionless,
right $R$-module $M$  is projective. Since $M$ is finitely generated, we
 may write \mbox{$M = F/K$} where $F$ is a finitely generated, free,
right $R$-module, and $K$ is a submodule  of $F$.
In particular, $F$ is  right $R$-injective.
Let $\mathcal{L}$ denote the set of submodules of $F$ whose intersection with $K$ is $\{0\}$.
Clearly, $\mathcal{L} \ne \emptyset$.
By Zorn's lemma,  $\mathcal{L}$~contains some $\subseteq$-maximal element $L$.
Then  $F \supseteq K{\oplus}\,L$.
We then have   injective  right $R$-module maps \mbox{$ K \to F/L$}, $k \mapsto k{+}L$, and $K \to F$, $k \mapsto k$.
Since  $F$ is right $R$-injective, the domain of the latter map can be transported along the former map to give
a right $R$-module map $\phi \colon F/L \to F$ such that, for all $k \in K$, $\phi(k{+}L) = k$.

Define $\phi' \colon F \to F$ by $f \mapsto \phi(f{+}L)$. Then $\operatorname{Ker}(\phi') \supseteq L$,
and, for all $k \in K$, $\phi'(k) = k$.  If  $\operatorname{Im}(\phi') = K$, then $F/K$ is projective,
as desired.
Thus, it suffices to assume that  $\operatorname{Im}(\phi') \supset K$, and obtain a contradiction.

Define $\phi''  \colon F \to M$ by \mbox{$f \mapsto  \phi'(f){+}K \in F/K = M$}.
Then there exists some
\mbox{$m \in \operatorname{Im}(\phi''){-} \{0\}$}.
Thus, $F \supset \operatorname{Ker}(\phi'') \supseteq K{\oplus}L$.

Since $M$ is torsionless,  we may compose $\phi''$ with a map from $M$ to $R$
which does not vanish on $m$, and thus
obtain a map \mbox{$\psi  \colon F \to R$} with  $F \supset \operatorname{Ker}(\psi) \supseteq  K{\oplus}L$.
Then $\operatorname{Im}(\psi)$ is a   finitely generated  right ideal of~$R$.
Since $R$ is right semihereditary, $\operatorname{Im}(\psi)$
  is  projective. Hence,
$F = \operatorname{Ker}(\psi) \oplus L'$ for some $L' \ne \{0\}$.
Now $F \supseteq K {\oplus}L {\oplus} L'$  and this contradicts the $\subseteq$-maximality of $L$ in $\mathcal{L}$, as desired.
\end{proof}

\begin{remarks}\label{rems:vnr}  Recall that if $R$ is von Neumann regular, then every
finitely presented right $R$-module is
projective by Theorem~\ref{thm:Vil}, hence every finitely generated right ideal of $R$
is a summand of $R$, and hence $R$ is right semihereditary.

Recall also that if $R$ is
right semihereditary and right self-injective, then every
finitely generated right ideal is projective, hence injective, hence a summand of $R$,
and $R$ is von Neumann regular.
\end{remarks}

 We shall be using~\cite[Corollary]{Osofsky}, which is as follows.

\begin{theorem}\label{thm:Osofsky}  {\normalfont (Osofsky, 1964)} If $R$ is right self-injective and
right hereditary, then $R$ is completely reducible.
\end{theorem}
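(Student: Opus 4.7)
The plan is to show that every right ideal $I$ of $R$ is a direct summand of $R_R$; this is the definition of $R$ being right semisimple, i.e.\ completely reducible. The strategy proceeds in two stages: first, use both hypotheses to establish that every cyclic right $R$-module is injective; then invoke Osofsky's main theorem on rings with this property.

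First, I would show that every cyclic right $R$-module is injective. Since $R$ is right self-injective, $R_R$ is an injective right $R$-module. A classical equivalence, following from the long exact sequence for $\operatorname{Ext}$ together with the vanishing of $\operatorname{Ext}^2$ in the right hereditary case, states that $R$ is right hereditary if and only if every quotient of an injective right $R$-module is injective. Applying this to $R_R$ and any right ideal $I$, we obtain that $R/I$ is injective, so every cyclic right $R$-module is injective.

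The remaining task is to deduce that $R$ is semisimple. This is the substantive content of Osofsky's 1964 theorem that any ring over which every cyclic right module is injective must be semisimple Artinian. Its proof assumes the contrary, and by a delicate transfinite construction manufactures a right ideal together with a homomorphism to a cyclic injective module that cannot be extended to all of $R_R$, thereby contradicting injectivity.

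The main obstacle is this second step, which carries the technical weight of the result. As a side observation, the first step already yields the weaker conclusion that $R$ is von Neumann regular, since each principal right ideal $xR \cong R/\operatorname{ann}_r(x)$ is injective and injective submodules always split off as direct summands; the gap from von Neumann regular plus right hereditary plus right self-injective up to semisimple Artinian is exactly what Osofsky's theorem closes.
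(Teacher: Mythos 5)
Your first step is correct: right hereditary is equivalent to the property that quotients of injective right modules are injective, and applying this to the injective module $R_R$ shows every cyclic right $R$-module $R/I$ is injective (your side remark that this already forces von Neumann regularity is also fine). The problem is what comes next: you then invoke, without proof, Osofsky's theorem that a ring all of whose cyclic right modules are injective is completely reducible, and you yourself note that this step ``carries the technical weight of the result.'' That theorem is the genuinely hard part -- its proof is a delicate cardinality/transfinite argument considerably more involved than the statement you are asked to prove -- and since the statement under review is precisely the corollary Osofsky drew from that theorem, what you have written is a (correct) account of how the corollary follows from her main theorem, not a self-contained proof. In the context of this paper, whose stated aim is to give short proofs minimizing background, the black box is exactly the gap.

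The paper avoids the cyclic-injectivity theorem entirely by exploiting the hereditary hypothesis directly. Assume $R$ is not completely reducible; then the set $\mathcal{U}$ of non-finitely-generated right ideals is non-empty (here von Neumann regularity, which follows as in Remarks~\ref{rems:vnr}, is used), and Zorn's lemma gives a maximal $I\in\mathcal{U}$. Right hereditariness makes $I$ projective, and Kaplansky's theorems on projective modules over von Neumann regular rings split $I$ into infinitely many nonzero right ideals, hence $I=I'\oplus I''$ with $I',I''\in\mathcal{U}$. Right self-injectivity then supplies a right ideal $\overline{I'}\supseteq I'$ with $\overline{I'}\notin\mathcal{U}$ and $\overline{I'}\cap I''=\{0\}$: one takes the injective hull of $I'$ inside $R$, which is a summand of $R$ and hence cyclic (or uses the double-dual argument of Remarks~\ref{rems:alt}). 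Then $I=I'\oplus I''\subset\overline{I'}\oplus I''\in\mathcal{U}$ contradicts the maximality of $I$. To complete your proposal you would either have to prove the cyclic-injectivity theorem you cite, or replace the second step by a direct argument of this kind.
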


\begin{proof}  Suppose not. We shall obtain a contradiction.
Let $\mathcal{U}$ denote the set of right ideals of $R$ that are not finitely generated.
By Remarks~\ref{rems:vnr}, $R$ is von Neumann regular; since $R$ is  not completely reducible,   $\mathcal{U} \ne \emptyset$.
 By Zorn's lemma, $\mathcal{U}$ contains some $\subseteq$-maximal element  $I$.

Since $R$ is right   hereditary,  $I$~is  right $R$-projective.
Since $R$ is von Neumann regular, results of Kaplansky show  that  $I$ is a direct sum of
infinitely many nonzero right ideals of $R$; in the case where $I$ is $\aleph_0$-generated,
this follows from~\cite[Proof of Lemma 1]{Kaplansky},
and in the case where $I$ is not $\aleph_0$-generated, it follows from~\cite[Theorem~1]{Kaplansky2}.
There then exists some decomposition $I = I' \oplus I''$ with $I'$, $I'' \in \mathcal{U}$.

We shall now show that there exists some right ideal $\overline{I'}$ of $R$ with the properties that
$I' \subseteq  \overline{I'}$, $\overline{I'} \not \in \mathcal{U}$, and $\overline{I'} \cap I'' = \{0\}$;
in Remarks~\ref{rems:alt}, we shall give an alternative argument which does not require
background on injective hulls.
Recall that Eckmann and Schopf~\cite{ES}
 showed that, since $R$ is right self-injective, $I'$ has a right $R$-injective hull $\overline{I'}$ in $R$
which is a maximal right $R$-essential extension of $I'$ in $R$;
see~\cite[proof of Theorem~3.30\,(iii)$\Rightarrow$(i)]{Rotman}.
Then $\overline{I'}$ is a
right $R$-summand of $R$, and, hence, $\overline{I'}$ is right $R$-cyclic.
Thus, $\overline{I'} \not \in \mathcal{U}$. Also, since $I' \cap I'' = \{0\}$,
it follows that  $\overline{I'} \cap I'' = \{0\}$, as claimed.

Since $\overline{I'} \not \in \mathcal{U}$, we have $I \ne \overline{I'}$  and $I \subset \overline{I'}$.
Since $I'' \in \mathcal{U}$,
it can be seen that \mbox{$\overline{I'} \oplus I'' \in \mathcal{U}$}.
Now \mbox{$I = I' \oplus I'' \subset \overline{I'} \oplus I''$}.  This contradicts
the maximality of $I$ in~$\mathcal{U}$ and completes the proof.
\end{proof}

\begin{remarks}\label{rems:alt}  Here is an alternative proof of the existence of an $\overline{I'}$.
Let \mbox{$M \coloneq R/I'$}, and consider the double-dual map
\mbox{$\phi \colon M \to M^{**}$} as in the proof of Theorem~\ref{thm:Y}.
There exists a right ideal $\overline{I'}$ of $R$ such that
\mbox{$\operatorname{Ker}(\phi) = \overline{I'}/I'$}, and then
\mbox{$\operatorname{Im}(\phi) \simeq R/\overline{I'}$}.
By Theorem~\ref{thm:Cateforis}, $R$ is right $\Pi$-coherent.
It is clear that $\operatorname{Im}(\phi)$
is a finitely generated, torsionless, right $R$-module, and, hence, $\operatorname{Im}(\phi)$  is finitely presented.
Thus, $\overline{I'}$  is finitely generated, and, hence,  \mbox{$\overline{I'} \not \in \mathcal{U}$}.
We shall now show that \mbox{$\overline{I'} \cap I'' = \{0\}$}.
Consider any $x \in I''$.  Then \mbox{$xR = eR$}   for some
 idempotent \mbox{$e\in R$}, since $R$ is von Neumann regular.
Since \mbox{$eR \subseteq I''$}, we see that \mbox{$eR \cap I' = \{0\}$}, and, hence, $eR$
embeds in \mbox{$R/I' = M$}.
Since $R$ is right self-injective, $eR$ is right $R$-injective.
 Thus, $eR$  becomes  a  summand of $M$ which, as $eR$ is projective, must then
 embed  in $M^{**}$.   We now see that \mbox{$eR \cap \overline{I'} = \{0\}$}.
It then follows that \mbox{$\overline{I'} \cap I'' = \{0\}$}, as claimed.
\end{remarks}

 The following combines~\cite[Theorem~2 (a)$\Rightarrow$(c)]{Goodearl72} and the commutative case of Theorem~\ref{thm:Osofsky}.

\begin{theorem}\label{thm:GoodO}     Let $R$ be a commutative, $\Pi$-coherent,
von Neumann regular ring.
\vspace{-2mm}
\begin{enumerate}[{\rm (i)}]
\setlength\itemsep{-2pt}
\item \label{it:GoodO1}  {\normalfont (Goodearl, 1972)}  $R$ is self-injective.
\item\label{it:GoodO2}  {\normalfont (Osofsky, 1964)}  If $R$ is   $\aleph_0$-Noe\-therian or, more generally,
 hereditary, then $R$ is  completely reducible.
\end{enumerate}
\end{theorem}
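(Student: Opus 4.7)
For part~\eqref{it:GoodO1}, my strategy is to pass through the Boolean algebra $B(R)$ of idempotents of $R$, first using $\Pi$-coherence to show that $B(R)$ is complete, and then invoking the classical fact that a commutative von Neumann regular ring with complete Boolean algebra of idempotents is self-injective.  For the completeness step, I would fix a subset $S\subseteq B(R)$ and consider the element $\eta\coloneq(e)_{e\in S}\in R^S$.  By $\Pi$-coherence, $R^S$ is a coherent left $R$-module, so the cyclic submodule $R\eta$ is finitely presented; its annihilator $\bigcap_{e\in S}(1-e)R$ is therefore finitely generated, and by von Neumann regularity of $R$ equals $(1-\hat e)R$ for a unique idempotent $\hat e\in R$.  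The equivalence $(1-e)R\supseteq(1-\hat e)R\Leftrightarrow e\le\hat e$ in $B(R)$ then identifies $\hat e$ as the supremum of $S$, so $B(R)$ is complete.

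The rest of part~\eqref{it:GoodO1} is the Baer-criterion verification.  Given an ideal $I$ and a homomorphism $\phi\colon I\to R$, write $I=\sum_{e\in E}eR$ with $E\coloneq I\cap B(R)$.  Each $\phi(e)=\phi(e^2)=e\phi(e)$ lies in $eR$, so one chooses $r_e\in R$ with $\phi(e)=er_e$, and the identity $e'\phi(e)=\phi(ee')=e\phi(e')$ gives the compatibility $ee'(r_e-r_{e'})=0$.  Completeness of $B(R)$ permits gluing this compatible family into a single element $x\in R$ with $xe=\phi(e)$ for every $e\in E$, so that multiplication by $x$ extends $\phi$ to $R$.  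I expect this gluing step, although standard in commutative regular ring theory, to be the main technical ingredient; it uses completeness of $B(R)$ in an essential way.

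For part~\eqref{it:GoodO2}, the hereditary case is immediate: part~\eqref{it:GoodO1} gives that $R$ is self-injective, and Theorem~\ref{thm:Osofsky} then forces $R$ to be completely reducible.  For the $\aleph_0$-Noetherian case, I would show that $B(R)$ is finite, after which $R$ decomposes as a finite product of fields and is completely reducible.  Granted part~\eqref{it:GoodO1}, $B(R)$ is already complete, and by von Neumann regularity the ideals of $R$ correspond to Boolean ideals of $B(R)$ with countable generation preserved.  So the task reduces to showing that every infinite complete Boolean algebra admits a non-$\aleph_0$-generated Boolean ideal, which via Stone duality asks for a closed subset of the extremally disconnected, compact Hausdorff Stone space that fails to be $G_\delta$.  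I expect this Stone-duality step to be the main obstacle; it follows because such a space cannot be perfectly normal, since perfectly normal plus compact Hausdorff forces first-countability, and a first-countable infinite extremally disconnected compact Hausdorff space would admit a nontrivial convergent sequence, contradicting the well-known fact that such spaces have none.
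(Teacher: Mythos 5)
The decisive step in your part~(i) --- the ``classical fact'' that a commutative von Neumann regular ring whose Boolean algebra of idempotents is complete must be self-injective --- is false, and your gluing step is exactly where it breaks. Let $Q$ be an infinite field, $X$ an infinite set, and let $R\subseteq Q^X$ be the subring of functions with finite range. Then $R$ is commutative and von Neumann regular, and its idempotents are the characteristic functions of arbitrary subsets of $X$, so $B(R)\cong\mathcal{P}(X)$ is complete; yet $R$ is not self-injective. Indeed, take $X=\naturals$, $Q=\rationals$, let $e_n$ be the characteristic function of $\{n\}$, let $I=\bigoplus_{n}e_nR$, and let $\phi\colon I\to R$ be the $R$-linear map with $\phi(e_n)=n\,e_n$. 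The family $r_{e_n}=n$ satisfies your compatibility condition, but any $x$ with $xe_n=\phi(e_n)$ for all $n$ would have $x(n)=n$ for every $n$, hence infinite range. So completeness of $B(R)$ gives suprema of idempotents but not joins of the partial elements $r_ee$; gluing such a family requires a lateral-completeness property of the ring itself, and for that $\Pi$-coherence must be used a second time (so far you only use it to prove $B(R)$ complete, which is correct but insufficient). This is precisely how the paper proves (i): given $\phi\colon I\to R$, it forms $\psi\colon R^2\to R^I$, $(x,y)\mapsto(\phi(xi)-yi:i\in I)$, notes that $\operatorname{Im}(\psi)$ is finitely generated and torsionless so that $\Pi$-coherence makes $\operatorname{Ker}(\psi)$ finitely generated, writes $\pi_1(\operatorname{Ker}(\psi))=Re$ with $e$ idempotent, and then shows $I\subseteq Re$ and extracts from $e\in Re$ a single element $y$ with $\phi(i)=iy$ for all $i\in I$. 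Your part~(i) would need to be reworked along some such lines; as written it has a genuine gap.

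For part~(ii), your hereditary case is the same as the paper's (part~(i) plus Theorem~\ref{thm:Osofsky}), so it inherits the gap above. Your $\aleph_0$-Noetherian argument is genuinely different: it bypasses the flawed portion of (i), using only the (correct) completeness of $B(R)$, the correspondence between ring ideals and Boolean ideals, and the classical facts that an infinite extremally disconnected compact Hausdorff space has no nontrivial convergent sequences and hence is not perfectly normal. That route appears workable, but it is much heavier than the paper's, which simply cites \cite[Lemma~1]{Kaplansky}: an $\aleph_0$-Noetherian von Neumann regular ring is already hereditary, so the $\aleph_0$-Noetherian case reduces at once to the hereditary one.
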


\begin{proof} \eqref{it:GoodO1}   (based loosely on Kobayashi \cite[Theorem~2]{Kobayashi})\,\,
By Baer's criterion~\cite[Theorem~3.20]{Rotman}, to show  that   $R$ is self-injective,
it suffices to show that, for each ideal $I$ of $R$,  each $R$-module map
 \mbox{$\phi \colon I \to R$} is given by multiplication by some element of~$R$.

Now
  $\pi_1 \colon   R^2 \to R$,
$ (x,y) \mapsto x$,  and $\psi \colon   R^2 \to   R^I$,
\mbox{$(x,y) \mapsto   (\phi(x{\cdot}i) -  y{\cdot}i : i \in I),$}
are $R$-module maps.
Then $\operatorname{Im}(\psi)$
is a finitely generated, torsionless $R$-module.
Since $R$ is $\Pi$-coherent,
$\operatorname{Ker}(\psi)$ is a finitely generated $R$-submodule of~$R^2$.
 Hence, $\pi_1(\operatorname{Ker}(\psi))$ is a finitely generated ideal of~$R$.
Since $R$ is von Neumann regular, there exists some idempotent $e \in R$ such that
 $\pi_1(\operatorname{Ker}(\psi)) =  R{\cdot}e$.  For each $x \in R$, we now see that
$x \in R{\cdot} e$ if and only if there exists some $y \in R$ such
that,  for each $ i \in I$,  $ \phi(x{\cdot}i) =  y{\cdot}i$, that is,  $ \phi(i{\cdot}x) =  i{\cdot}y$,
since $R$ is commutative.

We apply  the `if' part taking $x \in I$ and $y = \phi(x)$.
For each $i \in I$, we have \mbox{$ \phi(i{\cdot}x) = i {\cdot} \phi(x) =  i {\cdot} y $}.
Hence,  $x \in R{\cdot} e$.  Thus, $I \subseteq R{\cdot}e$ and $i  {\cdot} e = i$ for all $ i \in I$.

We now apply  the `only if' part taking $x = e$.  There exists some $y \in R$ such that,  for each $i \in I$,
$  \phi(i{\cdot}e) = i{\cdot}y $, that is,
$\phi(i) =  i{\cdot}y$, as desired.

 \eqref{it:GoodO2}  Recall that a right $\aleph_0$-Noetherian, von Neumann regular ring is
right hereditary, by~\cite[Lemma 1]{Kaplansky}.  Now  \eqref{it:GoodO2} follows from~\eqref{it:GoodO1}
and Theorem~\ref{thm:Osofsky}.
\end{proof}

\begin{remarks}
If  $\text{\normalfont{w.gl.dim\,}}R \ge 2$, then $R$ is not
$\Pi$-semihereditary.

If $\text{\normalfont{w.gl.dim\,}}R  =  1$ and $R$ is right  Sahaev, then
  $R$ is left Sahaev and $\Pi$-semihered\-i\-tary, by Theorem~\ref{thm:jondrup}\eqref{it:jon2}.
This class includes all the semifirs that are not skew fields, by Theorem~\ref{thm:ex4}.
It also includes all the Pr\"ufer domains that are not fields.

If $\text{\normalfont{w.gl.dim\,}}R = 0$ and $R$ is left or right self-injective,
then $R$ is $\Pi$-semihereditary, by Theorem~\ref{thm:Cateforis}.  This class
includes all the  right semihereditary, right self-injective rings.
It also includes all the commutative $\Pi$-semi\-hered\-itary rings which have
 \mbox{$\text{\normalfont{w.gl.dim\,}}R = 0$}, by Theorem~\ref{thm:GoodO}\eqref{it:GoodO1}.
It also includes all the completely reducible rings.

If $\text{\normalfont{w.gl.dim\,}}R = 0$ and $R$ is right  Sahaev,
then clearly every finitely generated right $R$-module is projective,
 and $R$ is then completely reducible.  Such rings are included in the previous  case.

 Thus, we have   two  disjoint  classes of examples of $\Pi$-semihereditary rings.
\end{remarks}

\section{If $R$ is $\Pi$-semihereditary, then $\alpha$ is injective}\label{sec:PID}

For the remainder of the article, we fix the following.

\begin{notation}\label{not:main2}  For each $n \in \naturals$, we  have a multiplication map
that will be denoted $(\prod\limits_{\naturals} R)^{\otimes n} \xrightarrow{\beta_n} \prod\limits_{\naturals^n} R$.

Whenever Hypotheses~\ref{hyp:main} hold, we have a natural factorization of $\alpha$ that will be denoted
  $A \amalg_R B \xrightarrow{\alpha'} R\series{a} \amalg_R R\series{b}
\xrightarrow{\alpha''} R\series{a,b}.$
\end{notation}

\begin{proposition}\label{prop:suff01} If {\,\normalfont Hypotheses~\ref{hyp:main}} hold and
  $A$, $B$, and $ \prod\limits_{\naturals} R $ \vspace{-2mm} are left and right $R$-flat, then
$A \amalg_R B \xrightarrow{\alpha'} R\series{a} \amalg_R R\series{b}$ is injective.
\end{proposition}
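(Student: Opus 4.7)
The plan is to exhibit $\alpha'$ as a direct sum of $R$-bimodule maps, each of which is an iterated tensor product of the inclusions $\mathfrak{a}\hookrightarrow aR\series{a}$ and $\mathfrak{b}\hookrightarrow bR\series{b}$, and then to show each such iterated tensor product is injective by a stepwise flatness argument.

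First I would apply the decomposition in Notation~\ref{not:2} \emph{twice}. Once to the domain $A\amalg_R B$, as in~\eqref{eq:sum0}, and once to the codomain $R\series{a}\amalg_R R\series{b}$, using that $R\series{a}=R\oplus aR\series{a}$ and $R\series{b}=R\oplus bR\series{b}$ as $R$-bimodules. Writing $\widehat{\mathfrak a}\coloneq aR\series{a}$ and $\widehat{\mathfrak b}\coloneq bR\series{b}$, this yields
\begin{equation*}
R\series{a}\amalg_R R\series{b} = R\oplus\widehat{\mathfrak a}\oplus\widehat{\mathfrak b}\oplus \bigoplus_{n\ge 1}\Bigl((\widehat{\mathfrak a}{\otimes}\widehat{\mathfrak b})^{\otimes n}\oplus (\widehat{\mathfrak b}{\otimes}\widehat{\mathfrak a})^{\otimes n}\oplus \cdots\Bigr).
\end{equation*}
Functoriality of the ring-coproduct construction implies that $\alpha'$ carries each alternating summand of $A\amalg_R B$ into the corresponding summand of $R\series{a}\amalg_R R\series{b}$, and there acts as the tensor product of the relevant inclusions $\mathfrak a\hookrightarrow\widehat{\mathfrak a}$ and $\mathfrak b\hookrightarrow\widehat{\mathfrak b}$. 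Injectivity of $\alpha'$ thus reduces to injectivity of each such iterated tensor product of inclusions.

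Next I would verify the flatness we need. From $A=R\oplus\mathfrak a$ as $R$-bimodules, $\mathfrak a$ is a two-sided summand of the flat bimodule $A$, and hence is itself left and right $R$-flat; similarly $\mathfrak b$. Since $a$ is $R$-centralizing, $R\series{a}\cong \prod_{\naturals}R$ as an $R$-bimodule, so the hypothesis makes $R\series{a}$, and hence its $R$-bimodule summand $\widehat{\mathfrak a}$, left and right $R$-flat; similarly for $\widehat{\mathfrak b}$. The elementary fact that the $R$-bimodule tensor product of two two-sided flat $R$-bimodules is again two-sided flat then shows, by induction on length, that \emph{every} alternating tensor product built from copies of $\mathfrak a,\mathfrak b,\widehat{\mathfrak a},\widehat{\mathfrak b}$ is left and right $R$-flat.

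To show that a typical component map, such as
\begin{equation*}
\mathfrak a\otimes\mathfrak b\otimes\cdots\otimes\mathfrak a \;\longrightarrow\; \widehat{\mathfrak a}\otimes\widehat{\mathfrak b}\otimes\cdots\otimes\widehat{\mathfrak a},
\end{equation*}
is injective, I would factor it as a finite composition in which one tensor factor at a time is enlarged from $\mathfrak a$ (resp.\ $\mathfrak b$) to $\widehat{\mathfrak a}$ (resp.\ $\widehat{\mathfrak b}$). At each step, the map is obtained by tensoring an injection of $R$-bimodules with flat $R$-bimodules on its left and right, and is therefore injective. Composing the steps gives injectivity of the whole component map, and then injectivity of $\alpha'$ follows by taking the direct sum over all alternating words.

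The main obstacle is purely bookkeeping: at each stage of the stepwise enlargement one must check that whatever mixed tensor product of $\mathfrak a,\mathfrak b,\widehat{\mathfrak a},\widehat{\mathfrak b}$ currently sits on the left and on the right of the factor being enlarged is flat on the appropriate side so that the enlargement step preserves injectivity. Once a clean induction (say, enlarging from left to right) is set up, everything follows formally from the two-sided flatness of $A$, $B$, and $\prod_{\naturals}R$, together with the direct-sum decomposition of Notation~\ref{not:2}.
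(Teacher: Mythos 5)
Your proposal is correct and follows essentially the same route as the paper: both identify $\mathfrak a,\mathfrak b, aR\series{a}, bR\series{b}$ as left and right $R$-flat (using $A=R\oplus\mathfrak a$, $B=R\oplus\mathfrak b$ and the bimodule isomorphism with $\prod_{\naturals}R$), prove by induction that mixed tensor products of these are two-sided flat, deduce injectivity of each alternating component map by enlarging one tensor factor at a time, and conclude via the direct-sum decompositions of $A\amalg_R B$ and $R\series{a}\amalg_R R\series{b}$. The paper merely compresses the stepwise-enlargement bookkeeping into the phrase ``it then follows by induction,'' which you have spelled out explicitly.
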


\begin{proof} Clearly, the $R$-bimodule map\vspace{-2mm} \mbox{$\prod\limits_{\naturals} R \to a R\series{a}$},
\mbox{$ (r_i : i \in \naturals ) \mapsto \sum_{i\ge 0} (r_i a^{i+1})$}, is bijective.
Similarly for
$b R\series{b}$.

  We see that $ a R\series{a}$ and $b R\series{b}$ are left and right
$R$-flat.

 We use Notation~\ref{not:2}.  Now,  $A = R \oplus \mathfrak{a}$ and $ B = R \oplus \mathfrak{b}$;
hence, $\mathfrak{a}$ and
$\mathfrak{b}$ are left and right $R$-flat.

Let $n \in \naturals$.
For $1 \le i \le n$, let $M_i$ denote one of
$a R\series{a}$, $\mathfrak{a}$, $b R\series{b}$, or $\mathfrak{b}$.
It follows by induction that the $R$-bimodule
$M_1 \otimes M_2 \otimes \cdots \otimes M_n$ is left and right $R$-flat.
It then follows by induction that each of
the natural maps
\begin{align*}
(\mathfrak{a} \otimes \mathfrak{b})^{\otimes n} &\to (a R\series{a} \otimes bR\series{b})^{\otimes n},
\\
(\mathfrak{b} \otimes \mathfrak{a})^{\otimes n} &\to ( bR\series{b} \otimes a R\series{a})^{\otimes n},
\\(\mathfrak{a} \otimes \mathfrak{b})^{\otimes n}\otimes \mathfrak{a}
& \to (a R\series{a} \otimes b R\series{b})^{\otimes n} \otimes a R\series{a},
\\
(\mathfrak{b} \otimes \mathfrak{a})^{\otimes n} \otimes \mathfrak{b}
&\to ( bR\series{b} \otimes a R\series{a})^{\otimes n} \otimes bR\series{b},
\end{align*}
is injective.
By using the standard $R$-bimodule decompositions of $A \amalg B$ and
\mbox{$R\series{a} \amalg R\series{b}$}, we see that, as an $R$-bimodule map,
$\alpha'$ is a direct sum of these injective maps, and hence is injective.
\end{proof}

\begin{proposition}\label{prop:suff02} Suppose that {\,\normalfont Hypotheses~\ref{hyp:main}} hold.
\vspace{-2mm}
\begin{enumerate}[{\rm (i)}]
\setlength\itemsep{-2pt}
\item $R\series{a} \amalg_R R\series{b}
\xrightarrow{\alpha''} R\series{a,b}$ is injective if and only if, for each $n \in \naturals$,
$(\prod\limits_{\naturals} R)^{\otimes n} \xrightarrow{\beta_n} \prod\limits_{\naturals^n} R$ is injective.
\item If $ R $ is left or right  coherent and
$\beta_2$ is injective, then $\alpha''$ is injective.
\item If $\prod\limits_{\naturals} R $ is left or right $R$-coherent, then $\alpha''$ is injective.
\end{enumerate}
\end{proposition}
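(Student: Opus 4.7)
The plan is to handle the three parts in order, reducing each to statements about the multiplication maps $\beta_n$.

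For part (i), I would apply the $R$-bimodule decomposition from Notation~\ref{not:2} in the special case $A = R\series{a}$, $B = R\series{b}$. Here $\mathfrak{a} = aR\series{a}$ carries the evident $R$-bimodule isomorphism to $F \coloneq \prod_\naturals R$ sending $\sum_{k\ge 1} r_k a^k$ to $(r_{k+1} : k\in\naturals)$, and analogously for $\mathfrak{b}$. Under these identifications, together with an analogous relabeling of each direct summand in the target, the piece of $\alpha''$ indexed by an alternating word $w$ of length $n$ becomes the map $\beta_n \colon F^{\otimes n} \to \prod_{\naturals^n} R$. Since the target summands have pairwise disjoint $\{a,b\}^*$-supports, injectivity of $\alpha''$ is equivalent to simultaneous injectivity of all the $\beta_n$; the cases $n \le 1$ are vacuous as $\beta_0$ and $\beta_1$ are identity maps.

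For part (ii), I would induct on $n$ to show each $\beta_n$ is injective, and then invoke (i). Assume first that $R$ is left coherent; Theorem~\ref{thm:Chase} gives that $F$ is right $R$-flat, and the cases $n \le 2$ hold by hypothesis. For $n \ge 3$, factor
\[
\beta_n \colon F \otimes F^{\otimes(n-1)} \xrightarrow{\;1_F \otimes \beta_{n-1}\;} F \otimes (\textstyle\prod_{\naturals^{n-1}} R) \xrightarrow{\;\mu_{\naturals,\,\naturals^{n-1}}\;} \prod_{\naturals^n} R.
\]
The first map is injective because $F$ is right $R$-flat and $\beta_{n-1}$ is injective by induction. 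For the second map, any set bijection $\naturals^{n-1} \to \naturals$ induces $R$-bimodule isomorphisms intertwining $\mu_{\naturals, \naturals^{n-1}}$ with $\mu_{\naturals, \naturals} = \beta_2$, so it is injective by hypothesis. The case where $R$ is right coherent is handled symmetrically using the mirror factorization $\beta_n = \mu_{\naturals^{n-1}, \naturals} \circ (\beta_{n-1} \otimes 1_F)$ together with left $R$-flatness of $F$.

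For part (iii), suppose $\prod_\naturals R$ is left $R$-coherent (the right case is dual). Then $R$ itself is left coherent, since submodules of coherent modules are coherent; and Theorem~\ref{thm:Good}~\eqref{it:Good11}$\Rightarrow$\eqref{it:Good12} supplies injectivity of $\mu_{X,\naturals}$ for every set $X$. Specializing to $X = \naturals$ gives $\beta_2$ injective, so (ii) applies. The one genuinely non-routine point throughout is the observation in (ii) that $\mu_{X,Y}$, viewed as an $R$-bimodule map, is determined up to isomorphism by the cardinalities of $X$ and $Y$; this is what allows $\mu_{\naturals, \naturals^{n-1}}$ to be absorbed into the hypothesis on $\beta_2$ alone, sidestepping the Noetherian side-conditions that Theorem~\ref{thm:Good}\eqref{it:Good13} would otherwise demand.
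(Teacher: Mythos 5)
Your proposal is correct and follows essentially the same route as the paper's proof: identifying $\mathfrak{a}$ and $\mathfrak{b}$ with $\prod_{\naturals}R$ so that the coproduct pieces become the maps $\beta_n$, inducting in (ii) by factoring $\beta_{n+1}$ through $(\prod_{\naturals}R)\otimes\beta_n$ followed by a copy of $\beta_2$ (using Theorem~\ref{thm:Chase} for flatness and a bijection $\naturals^{n}\cong\naturals$), and deducing (iii) from Theorem~\ref{thm:Good}\eqref{it:Good11}$\Rightarrow$\eqref{it:Good12} together with (ii). The only differences are cosmetic, e.g.\ your explicit mirror factorization for the right-coherent case, which the paper dispatches with ``by symmetry.''
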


\begin{proof}   (i)    We may assume that $A = R\series{a}$, $B = R\series{b}$ and $\alpha = \alpha''$.
We use Notation~\ref{not:2}. Let $n \in \naturals$.
Consider the map
$(\mathfrak{a}\otimes \mathfrak{b})^{\otimes n} \to (\mathfrak{a}\mathfrak{b})^n \subseteq R\series{a,b}$.
The $R$-bimodule isomorphisms $ \prod\limits_{\naturals} R \to \mathfrak{a}$ and
$ \prod\limits_{\naturals} R \to \mathfrak{b}$ give an embedding
\mbox{$(\prod\limits_{\naturals} R)^{\otimes (2n)} \to R\series{a} \amalg R\series{b}$} with image
$(\mathfrak{a}\otimes \mathfrak{b})^{\otimes n}$.
They also give an embedding
$ \prod\limits_{\naturals^{2n}} R \to R\series{a,b}$ with image $\widehat{ (\mathfrak{a}\mathfrak{b})^n}$,
the closure of $(\mathfrak{a}\mathfrak{b})^n $ in $ R\series{a,b}$.
Thus the map $(\mathfrak{a}\otimes \mathfrak{b})^{\otimes n} \to \widehat{ (\mathfrak{a}\mathfrak{b})^n}$ is
a copy of $\beta_{2n}$.
We now see that $\alpha$ is a direct sum of $R$-bimodule maps
that consist of one copy of $\beta_0$ and two copies of
$\beta_n$ for each $n \ge 1$.
In particular, $\alpha$ is injective if and only if all the $\beta_n$, $n \in \naturals$, are injective.

(ii)   By symmetry, we may assume that $ R $ is left  coherent.

We shall show by induction that, for each $n \in \naturals$, $\beta_n$ is injective.
It is clear that $\beta_0$ is injective.
Suppose that $n \ge 0$, and that $\beta_n$ is injective.
Notice that $\beta_{n+1}$ may be factored as
\vspace{-2mm}$$\textstyle (\prod\limits_{\naturals} R) \otimes ((\prod\limits_{\naturals} R)^{\otimes n})
\xrightarrow{(\prod\limits_{\naturals} R) \otimes \beta_n}
(\prod\limits_{\naturals} R) \otimes (\prod\limits_{\,\,\,\,\,\naturals^n} R)
\xrightarrow{\beta_2'} \prod\limits_{\,\,\,\,\,\,\,\naturals^{n+1}} R.$$

Since  $R$ is left coherent,
by Theorem~\ref{thm:Chase} \eqref{it:Chasea}$\Rightarrow$\eqref{it:Chaseb},
$\prod\limits_{\naturals} R$ is right $R$-flat, and, hence,
$(\prod\limits_{\naturals} R) \otimes \beta_n$ is injective.
Since
there exists a bijection between $\naturals^n$ and $\naturals$,
we see that $\beta_2'$ may be identified with $\beta_2$ and is then injective.
Hence, the composition $\beta_{n+1}$ is injective.
This completes the inductive argument.
By (i), $\alpha''$ is injective.

(iii)  By symmetry, we may assume that  $\prod\limits_{\naturals} R $ is left $R$-coherent.
By Theorem~\ref{thm:Good}~\eqref{it:Good11}$\Rightarrow$\eqref{it:Good12},
 $R$  is   left coherent and  $\beta_2$ is injective.
By (ii), $\alpha''$ is injective.
\end{proof}

\begin{corollary}\label{cor:suff0} With
 {\,\normalfont Hypotheses~\ref{hyp:main}},  the following hold.
\vspace{-2mm}
\begin{enumerate}[{\rm (i)}]
\setlength\itemsep{-2pt}
\item  If $\prod\limits_\naturals R$   is a left or right
semi\-hereditary $R$-module,
then $\alpha$~is injective.
\item  If $R$ is a $\Pi$-semihereditary ring,
then $\alpha$~is injective.
\item  If $R$ is   right semihereditary \mbox{$($e.\,g.\,\,$\text{\normalfont{w.gl.dim\,}}R = 0)$}
 and $R$ is   right self-injective,
then $\alpha$~is injective  \mbox{$($and $\text{\normalfont{w.gl.dim\,}}R = 0)$}.
\item  If  \,$\text{\normalfont{w.gl.dim\,}}R \le 1$   \mbox{$($e.\,g.\,\,$R$ is left or right semihereditary$)$}  and $R$ is left or  right Sahaev, then
$\alpha$ is injective
$($and $R$ is left and right semihereditary and left and right Sahaev$)$.
\end{enumerate}
\end{corollary}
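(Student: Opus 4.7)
The plan is to deduce (ii), (iii), and (iv) from (i) using the results of Section~\ref{sec:back}, and then to prove (i) directly from Propositions~\ref{prop:suff01} and~\ref{prop:suff02}. The key observation for (i) is that semihereditariness of $\prod_{\naturals} R$ on one side implies, by directed-union arguments, flatness of every submodule on that side, and also implies coherence of $\prod_{\naturals} R$ on the same side---precisely the inputs that Propositions~\ref{prop:suff01} and~\ref{prop:suff02} require.

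For the reductions: part (iv) will follow from Corollary~\ref{cor:JondrupMain}(i) together with its left-right dual, which force $R$ to be $\Pi$-semihereditary, left and right semihereditary, and left and right Sahaev. Part (iii) will follow from Theorem~\ref{thm:Cateforis} (Cateforis), which gives $\Pi$-semihereditariness, together with Remarks~\ref{rems:vnr} to obtain that $R$ is von Neumann regular and hence $\text{w.gl.dim\,}R = 0$. Part (ii) will follow from the equivalence in Theorem~\ref{thm:equiv}: $\Pi$-semihereditariness says every finitely generated, torsionless right $R$-module is projective; each finitely generated right $R$-submodule of $\prod_{\naturals} R$ is torsionless (as a submodule of a product of copies of $R$) and hence projective, so $\prod_{\naturals} R$ is right semihereditary, and (ii) reduces to (i).

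For (i), I would assume without loss of generality that $\prod_{\naturals} R$ is right semihereditary (the left case is symmetric). Then every finitely generated right $R$-submodule of $\prod_{\naturals} R$ is projective, hence flat, and any right $R$-submodule is a directed union of such, hence also flat. Consequently $A$, $B$, $\prod_{\naturals} R$ are right $R$-flat, and $\prod_{\naturals} R$ is right $R$-coherent, since semihereditary modules are coherent. Proposition~\ref{prop:suff02}(iii) then delivers the injectivity of $\alpha''$.

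Injectivity of $\alpha'$ via Proposition~\ref{prop:suff01} requires two-sided flatness of $A$, $B$, $\prod_{\naturals} R$, not just right flatness, and this is the main obstacle: the directed-union argument does not by itself yield left flatness from right semihereditariness. My plan is to recover the missing left flatness by the symmetric analysis afforded by the left-right symmetry inherent in the bimodule structure of $R\series{a}$, $R\series{b}$, and $\prod_{\naturals} R$, which allows the parallel treatment on the left to proceed. Once both sides of flatness are in place, Proposition~\ref{prop:suff01} gives injectivity of $\alpha'$, and composing $\alpha = \alpha'' \circ \alpha'$ yields the required injectivity of $\alpha$.
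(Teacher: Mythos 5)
Your reductions of (ii), (iii) and (iv) to (i) are exactly the paper's route: (ii) is immediate from the definition of $\Pi$-semihereditary (for every set $X$, $\prod_X R$ is semihereditary as right $R$-module), (iii) follows from Theorem~\ref{thm:Cateforis} (with Remarks~\ref{rems:vnr} for the parenthetical), and (iv) from Corollary~\ref{cor:JondrupMain}\eqref{it:J1}. The first half of your proof of (i) is also the paper's: one-sided semihereditariness of $\prod_\naturals R$ gives coherence and flatness of all submodules on that side, and Proposition~\ref{prop:suff02}(iii) then gives injectivity of $\alpha''$.

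The genuine gap is the step you yourself flag: obtaining flatness of $A$, $B$, $R\series{a}$, $R\series{b}$ and $\prod_\naturals R$ on the \emph{other} side, which Proposition~\ref{prop:suff01} needs. Your proposed fix --- ``the left-right symmetry inherent in the bimodule structure'' --- is not an argument: the hypothesis in (i) is one-sided, and the bimodule structure of $\prod_\naturals R$ does not transfer semihereditariness or flatness across sides (indeed, for a general ring a direct product of copies of $R$ need not be flat on the other side at all). The missing ingredient is Chase's theorem, Theorem~\ref{thm:Chase}. Concretely, say $\prod_\naturals R$ is right $R$-semihereditary. Since $R$ embeds in $\prod_\naturals R$ as a right module, every finitely generated right ideal of $R$ is projective, so $R$ is right coherent and $\text{\normalfont w.gl.dim\,}R \le 1$. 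The right-handed version of Theorem~\ref{thm:Chase} \eqref{it:Chasea}$\Rightarrow$\eqref{it:Chaseb} then shows that $\prod_X R$ is \emph{left} $R$-flat for every set $X$; and since $\text{\normalfont w.gl.dim\,}R \le 1$, every left submodule of a flat left module is flat, so $R\series{a}\cong R\oplus\prod_\naturals R$, $R\series{b}$, $A$ and $B$ are left $R$-flat as well. With flatness on both sides in hand, Proposition~\ref{prop:suff01} gives injectivity of $\alpha'$, and $\alpha=\alpha''\circ\alpha'$ is injective. Without the appeal to Chase's theorem (or an equivalent), your argument for (i) does not close.
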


\begin{proof} (i) By symmetry, we may assume that
$\prod\limits_\naturals R$ is left $R$-semihereditary.
By Proposition~\ref{prop:suff02}(iii), $\alpha''$ is injective.

Notice that
$\prod\limits_\naturals R$ is left $R$-coherent and that
every left $R$-submodule of $\prod\limits_\naturals R$ is flat.
In particular,
$R$ is left coherent and $\text{w.\,gl.\,dim\,}R \le 1$.
By Theorem~\ref{thm:Chase} \eqref{it:Chasea}$\Rightarrow$\eqref{it:Chaseb},
$\prod\limits_\naturals R$ is right $R$-flat, and then all its
right $R$-submodules are flat.
Hence, $\prod\limits_\naturals R$, $R\series{a}$, $R\series{b}$, $A$, and $B$, are left and right $R$-flat.
By Proposition~\ref{prop:suff01}, $\alpha'$ is
injective.

Hence, $\alpha$ is injective.

\noindent\hskip18pt(ii) follows from (i).

(iii) follows from (ii) and  Theorem~\ref{thm:Cateforis}.

\noindent\hskip16pt(iv) follows from (ii) and  Corollary~\ref{cor:JondrupMain}\eqref{it:J1}.
\end{proof}

In the next section, we shall  see results of Bergman and  Goodearl that show that
 $\alpha$ can fail to be injective whenever $R$ is a commutative ring such that  either
 $\text{w.\,gl.\,dim\,} R \ge 2$ or $R$ is $\aleph_0$-Noetherian, non-self-injective, and von Neumann regular.

\section{Non-injectivity phenomena}\label{sec:ex}

In this section, we  give examples contributed by \mbox{G.\,M.\,\,Bergman}
where Hypotheses~\ref{hyp:main} hold and $\alpha$ is not injective.
Throughout,  we use the factorization of $\alpha$ as
  $A \amalg_R B \xrightarrow{\alpha'} R\series{a} \amalg_R R\series{b}
\xrightarrow{\alpha''} R\series{a,b}.$

We begin with a general example
where   $R[a] \subseteq A \subseteq R[a,(1{+}\,a)^{-1}]$ and \mbox{$R[b] \subseteq B \subseteq R[b,(1{+}\,b)^{-1}]$},
but
\mbox{$\alpha'$} is not injective.

\begin{proposition}\label{prop:wd2} {\normalfont (Bergman)} Suppose that
 $R$ has two-sided ideals $I$, $J$ such that the multiplication map $I \otimes J \to IJ$ is not
injective, e.g.  \mbox{$R = \integers/p^2\integers$}, $p$ prime, or $R = \rationals[x,y]$.  Then, with  a suitable  choice of $A$ and $B$,
   {\,\normalfont Hypotheses~\ref{hyp:main}}   hold  and $\alpha'$, $\alpha$ are not injective.
\end{proposition}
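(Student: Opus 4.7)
The plan is to choose $A$ and $B$ so that the natural $R$-bimodule summand $\mathfrak{a}\otimes_R\mathfrak{b}$ of $A\amalg_R B$ contains a faithful copy of $I\otimes_R J$ on which the restriction of $\alpha'$ factors through the multiplication map $I\otimes_R J\to IJ$; the hypothesized kernel of the latter will then witness non-injectivity of $\alpha'$, and consequently of $\alpha=\alpha''\circ\alpha'$.

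Concretely, we take $\mathfrak{a}$ to be the $R$-bimodule of power series in $aR\series{a}$ all of whose coefficients lie in~$I$, and set $A\coloneq R\oplus\mathfrak{a}$; the inclusion $I^2\subseteq I$ forces $\mathfrak{a}\cdot\mathfrak{a}\subseteq\mathfrak{a}$, so $A$ is a subring of $R\series{a}$, and $\mathfrak{a}=A\cap aR\series{a}$ in the sense of Notation~\ref{not:2}. Define $B$ and $\mathfrak{b}$ analogously, using $J$ and $b$. Since $a$ is $R$-centralizing, the map $ia\mapsto i$ is an $R$-bimodule isomorphism $Ia\iso I$, and projection onto the coefficient of~$a$ exhibits $Ia$ as an $R$-bimodule direct summand of~$\mathfrak{a}$; similarly for $Jb$ in $\mathfrak{b}$. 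Tensoring yields an $R$-bimodule direct summand $Ia\otimes_R Jb\iso I\otimes_R J$ of $\mathfrak{a}\otimes_R\mathfrak{b}$, which is itself a summand of $A\amalg_R B$ by the decomposition~\eqref{eq:sum0}.

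Now, given any $\tau=\sum_k i_k\otimes j_k\in I\otimes_R J$, the corresponding element $\tau'\coloneq\sum_k(i_ka)\otimes(j_kb)$ in $\mathfrak{a}\otimes_R\mathfrak{b}\subseteq A\amalg_R B$ has image in the analogous summand $(aR\series{a})\otimes_R(bR\series{b})$ of $R\series{a}\amalg_R R\series{b}$. There, repeated use of the $R$-centralizing property lets us pull each $i_k$ across the tensor, obtaining $\sum_k(i_ka)\otimes(j_kb)=a\otimes\bigl(\sum_k i_kj_k\bigr)b$. Hence $\alpha'(\tau')$ is controlled by the image of $\tau$ under the multiplication map $I\otimes_R J\to IJ$, and choosing $\tau$ nonzero in the kernel of that map --- which exists by hypothesis --- produces a nonzero element of $A\amalg_R B$ killed by~$\alpha'$.

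The only delicate point in this plan is ensuring that $\tau'$ remains nonzero in $A\amalg_R B$ rather than merely in the isolated tensor product $Ia\otimes_R Jb$. This is handled cleanly: each inclusion in the chain $Ia\otimes_R Jb\hookrightarrow\mathfrak{a}\otimes_R\mathfrak{b}\hookrightarrow A\amalg_R B$ is the inclusion of an $R$-bimodule direct summand, so $I\otimes_R J$ embeds as an $R$-bimodule direct summand of $A\amalg_R B$ and non-vanishing is preserved. For the two suggested examples, a kernel element $\tau$ is given by $p\otimes p$ when $R=\integers/p^2\integers$ and $I=J=(p)$, or by $x\otimes y-y\otimes x$ when $R=\rationals[x,y]$ and $I=J=(x,y)$.
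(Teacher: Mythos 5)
Your proof is correct, and the witnessing mechanism is exactly the one the paper uses: isolate copies of $I$ and $J$ as $R$-bimodule direct summands of $\mathfrak{a}$ and $\mathfrak{b}$, so that $I\otimes_R J$ splits off $A\amalg_R B$ inside the $\mathfrak{a}\otimes\mathfrak{b}$ term of \eqref{eq:sum0}, while under $\alpha'$ the elements of $I$ can be slid across the tensor over $R$ (because $a\in R\series{a}$ centralizes $R$), collapsing that summand onto $IJ$; a nonzero kernel element of $I\otimes_R J\to IJ$ then dies under $\alpha'$, hence under $\alpha$. Where you differ is the concrete choice of subrings: you take $A=R\oplus\mathfrak{a}$ with $\mathfrak{a}$ the power series in $aR\series{a}$ all of whose coefficients lie in $I$ (note that closure of $A$ under multiplication uses $RI+IR\subseteq I$ as well as $I^2\subseteq I$, a trivial point), so that $Ia$ is already an isolated coefficient slot; the paper instead works inside the group ring $RH$ of $H=\gen{1{+}\,a}$, taking $A=R[a]\oplus\bigoplus_{i\le -1}(1{+}\,a)^{i}I$, and uses the summand $(1{+}\,a)^{-1}aI$ rather than $Ia$. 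Your construction is simpler and avoids the localization bookkeeping, but the paper's choice buys a sharper conclusion: there $R[a]\subseteq A\subseteq R[a,(1{+}\,a)^{-1}]$ and likewise for $B$, so non-injectivity occurs already for subrings sandwiched between the polynomial ring and its Ore localization, in pointed contrast with Proposition~\ref{prop:loc} and Example~\ref{ex:Fox}; your $A$ does not contain $R[a]$ (indeed $a\notin A$), so it does not exhibit that contrast, though it fully proves the proposition as stated. Your kernel elements $p\otimes p$ and $x\otimes y-y\otimes x$ for the two sample rings agree with the paper's.
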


\begin{proof} For    $R = \integers/p^2\integers$, $p$ prime, we may take  $I= J = pR$, where $p\otimes p \mapsto 0$.

For  $R = \rationals[x,y]$, we may take $I=J= xR+yR$, where $(x\otimes y){-}(y\otimes x) \mapsto 0$.

 Let $h \coloneq 1{+}\,a$ and
$H \coloneq \gen{h} \le \text{U}(R\series{a})$.
Then $R[h] = R[a]$, and we view \mbox{$ R[h] \subseteq RH \subseteq R\series{a}$}.
We have $R$-bimodule decompositions
$$\textstyle RH = \bigoplus\limits_{i\in \integers} h^iR = R[h] \oplus \bigoplus\limits_{i \le -1} (h^iR).$$
In $RH$, take $A$ to be the subring $R[h] \oplus \bigoplus\limits_{i \le -1} (h^iI)$.
Since $h^{-1} = 1{-}h^{-1}a$, $ R {+\,} aA = A$.

Let $k \coloneq 1{+}\,\,b$,
$K \coloneq \gen{k} \le \text{U}(R\series{b})$.
We view $R[b] = R[k] \subseteq RK \subseteq R\series{b}$, and we have an
$R$-bimodule decomposition
$$\textstyle RK = R[k] \oplus \bigoplus\limits_{i \le -1} (Rk^i).$$
In $RK$, take $B$ to be the subring $R[k] \oplus \bigoplus\limits_{i \le -1} (Jk^i)$.
Since $k^{-1} = 1{-}k^{-1}b$, $ R {+}\, bB = B$.

We use Notation~\ref{not:2}. Here,
$$\textstyle\mathfrak{a} = aA = aR[h] \oplus \bigoplus\limits_{i \le -1} (h^iaI), \qquad\text{and}\qquad
\mathfrak{b} = bB = bR[k] \oplus \bigoplus\limits_{i \le -1} (Jbk^i).$$
In particular, $h^{-1}aI$ is an $R$-bimodule direct summand of $\mathfrak{a}$,
and $Jbk^{-1} $ is an $R$-bimodule direct summand of $\mathfrak{b}$.
Hence $(h^{-1}aI)\otimes  (Jbk^{-1})$ may be viewed as an $R$-bimodule direct summand of
$\mathfrak{a} \otimes \mathfrak{b}$, and also of $A \amalg B$.
This summand
does not map injectively to $(h^{-1}aI) (Jbk^{-1})$ in $(RH) \amalg (RK)$.
It follows that $\alpha'$ is not injective in this case.  Hence, $\alpha$ is not injective.
\end{proof}

Recall that $\text{\normalfont w.gl.dim\,} R \ge 2$
if and only if there exists  a   right ideal $I$ of $R$ that is not right $R$-flat;
recall also that $I$ is not right $R$-flat if and only if  there exists a  left ideal $ J$ of $R$ such that the
multiplication map $I \otimes_R J \to IJ$ is not injective.  The next result then
follows from Proposition~\ref{prop:wd2}.

\begin{corollary}\label{cor:wd2}  Suppose that
 $R$ is commutative and  that $\text{\normalfont w.gl.dim\,} R \ge 2$.  Then
with  a suitable  choice of $A$ and $B$,
   {\,\normalfont Hypotheses~\ref{hyp:main}}   hold  and  $\alpha$ is not injective. \hfill\qed
\end{corollary}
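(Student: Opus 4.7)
The plan is to reduce the corollary directly to Proposition~\ref{prop:wd2} by using the characterization of $\text{w.gl.dim}\,R\ge 2$ recalled in the paragraph immediately preceding the statement.

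First I would invoke the standard flatness criterion for weak global dimension: since $\text{w.gl.dim}\,R\ge 2$, there exists a right ideal $I$ of $R$ that is not right $R$-flat. Then, by the ideal form of the flatness criterion (e.g.\ as quoted just before the corollary, or equivalently Theorem~\ref{thm:flatpre2} in the case $F=R$, $K=J$), the failure of flatness of $I$ is detected by some left ideal $J$ for which the multiplication map $I\otimes_R J\to IJ$ is not injective.

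Next I would use the hypothesis that $R$ is commutative. Commutativity makes the distinction between left, right, and two-sided ideals disappear, so the ideals $I$ and $J$ produced above are automatically two-sided. In other words, we have produced two two-sided ideals $I,J$ of $R$ such that $I\otimes_R J\to IJ$ is not injective, which is precisely the hypothesis of Proposition~\ref{prop:wd2}.

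Finally, I would apply Proposition~\ref{prop:wd2} to these $I$ and $J$: it furnishes subrings $A\subseteq R\series{a}$ and $B\subseteq R\series{b}$ (with $R[a]\subseteq A\subseteq R[a,(1{+}a)^{-1}]$ and similarly for $B$) satisfying Hypotheses~\ref{hyp:main}, for which $\alpha'$ fails to be injective, and therefore $\alpha$ also fails to be injective. There is essentially no obstacle here: the entire content of the corollary is repackaging the characterization of $\text{w.gl.dim}\,R\ge 2$ into the language required by Proposition~\ref{prop:wd2}, and commutativity is used only to upgrade one-sided ideals to two-sided ideals.
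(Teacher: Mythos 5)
Your proposal is correct and follows essentially the same route as the paper: the paper's proof consists precisely of recalling that $\text{w.gl.dim}\,R \ge 2$ yields a right ideal $I$ that is not flat, hence a left ideal $J$ with $I\otimes_R J\to IJ$ not injective, and then (using commutativity so that $I$ and $J$ are two-sided) applying Proposition~\ref{prop:wd2}. Nothing is missing.
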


\begin{remarks}   Suppose  that  $R$ is a commutative Sahaev ring and that Hypotheses~\ref{hyp:main}  hold.

 By Corollary~\ref{cor:wd2},  if $\alpha$ is injective for all choices of  $A$ and $B$, then
$\text{\normalfont w.gl.dim\,} R \le 1$.

Conversely, by Corollaries~\ref{cor:suff0} and~\ref{cor:JondrupMain}~\eqref{it:J1},
 if  $\text{\normalfont w.gl.dim\,} R \le 1$,
then $\alpha$ is   injective.

Thus, among the commutative Sahaev rings $R$,
the ones for which $\alpha$ is always injective are the ones with $\text{\normalfont w.gl.dim\,} R \le 1$, that is,
the semihereditary ones.

Hence, among the commutative domains $R$, the ones for which $\alpha$ is always
injective are the commutative Pr\"ufer domains.

Also, among the commutative Noetherian rings~$R$,
the ones for which $\alpha$ is always
injective are the hereditary ones.
\end{remarks}

We now give examples where Hypotheses~\ref{hyp:main} hold and $\beta_2$ is not injective,
and, hence, by Proposition~\ref{prop:suff02}(i),
\mbox{$\alpha''$} is not injective.
These are then examples where
$A = R\series{a}$ and $B = R\series{b}$ in Hypotheses~\ref{hyp:main}
and $\alpha$ is not injective.
We remark that if we had been working
in the setting of commutative rings of power series, then $\alpha''$ would have looked like $\beta_2$.

The following result gives a method for constructing
examples where $\beta_2$ is not injective.

\begin{proposition}\label{prop:tensor} {\normalfont (Bergman)} Let $Q$ be a field.
For $i = 1, 2$,
let $R_i$ be a commutative, augmented $Q$-ring whose augmentation ideal $I_i$ is not finitely generated,
and let $X_i$ be an infinite set such that $I_i$ is $\vert X_i\vert$-generated.
Let \mbox{$R \coloneq (R_1 \otimes_Q R_2)/(I_1\otimes_Q I_2)$}, a commutative $Q$-ring.
Then
\mbox{$\textstyle  \mu_{X_1,X_2} \colon (\null^{X_1}\mkern-2mu  R) \otimes_R ( R^{X_2})
\to \null^{X_1}\mkern-2mu R^{X_2}$}
is not injective.

In detail, for $i = 1,2$, any family $m_i \coloneq (r_{i,j_i} : j_i \in X_i)$
that generates~$I_i$ as ideal of~$R_i$ may be viewed as an element of
$ \prod\limits_{X_i} R $,\vspace{-2.5mm}
and then
\mbox{$m_1 \otimes m_2$}
is a non-zero element of   $\operatorname{Ker}(\mu_{X_1,X_2})$.
\end{proposition}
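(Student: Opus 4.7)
The plan divides into an easy part and a harder part. For the easy part, verifying $\mu_{X_1, X_2}(m_1 \otimes m_2) = 0$, I would just compute: the $(j_1, j_2)$-entry of the resulting element of $\prod_{X_1 \times X_2} R$ is the image in $R$ of $r_{1,j_1} \otimes r_{2, j_2} \in I_1 \otimes_Q I_2$, which is zero by the very construction of $R$.

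For the non-vanishing of $m_1 \otimes m_2$, I would use the two ring epimorphisms $\pi_1 \coloneq \mathrm{id}_{R_1} \otimes \epsilon_2 \colon R \to R_1$ and $\pi_2 \coloneq \epsilon_1 \otimes \mathrm{id}_{R_2} \colon R \to R_2$. Each is well-defined on $R$ because the augmentation $\epsilon_i$ kills $I_i$, and each is split by the natural ring map $\iota_i \colon R_i \to R$. Viewing $R_i$ as an $R$-module via $\pi_i$, we obtain an induced map
$$\textstyle \Theta \colon (\prod\limits_{X_1} R) \otimes_R (\prod\limits_{X_2} R) \to (\prod\limits_{X_1} R_1) \otimes_R (\prod\limits_{X_2} R_2);$$
since $\pi_i \circ \iota_i = \mathrm{id}_{R_i}$, the map $\Theta$ sends $m_1 \otimes m_2$ to $m_1 \otimes m_2$ in the target, so it suffices to prove non-vanishing there.

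The key observation is that the target tensor product collapses to an ordinary tensor product over $Q$ of two explicit quotient modules. The $R$-bilinearity relation $f r \otimes g = f \otimes r g$ in the target becomes $f \pi_1(r) \otimes g = f \otimes \pi_2(r) g$; plugging in $r \in I_1$ (where $\pi_2$ vanishes) forces $(\prod_{X_1} R_1) I_1 \otimes g = 0$, plugging in $r \in I_2$ forces $f \otimes I_2 (\prod_{X_2} R_2) = 0$, and plugging in $r \in Q$ recovers only $Q$-bilinearity. Because $R = Q \oplus I_1 \oplus I_2$ as a $Q$-vector space, these are the only relations, yielding
$$\textstyle (\prod\limits_{X_1} R_1) \otimes_R (\prod\limits_{X_2} R_2) \cong \bigl((\prod\limits_{X_1} R_1)/(\prod\limits_{X_1} R_1) I_1\bigr) \otimes_Q \bigl((\prod\limits_{X_2} R_2)/I_2 (\prod\limits_{X_2} R_2)\bigr).$$

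It then remains to show that $\overline{m}_1 \neq 0$ in the first quotient (the case of $\overline{m}_2$ being symmetric); because $Q$ is a field, the tensor $\overline{m}_1 \otimes_Q \overline{m}_2$ will then be non-zero, and hence so will $m_1 \otimes m_2$ by pulling back through $\Theta$. If $m_1$ did lie in $(\prod_{X_1} R_1) I_1$, then $m_1 = \sum_{k=1}^n f_k a_k$ for some $n \in \naturals$, $f_k \in \prod_{X_1} R_1$, and $a_k \in I_1$; every coordinate $r_{1, j_1}$ would then lie in the finitely generated ideal $(a_1, \ldots, a_n) \subseteq I_1$, and since $\{r_{1, j_1} : j_1 \in X_1\}$ generates $I_1$ we would conclude $I_1 = (a_1, \ldots, a_n)$, contradicting the standing assumption that $I_1$ is not finitely generated. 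The main obstacle in executing the plan is justifying the tensor-product simplification---that all $R$-bilinearity relations in the target are generated by the three elementary cases $r \in Q$, $r \in I_1$, and $r \in I_2$---which rests on the $Q$-module decomposition $R = Q \oplus I_1 \oplus I_2$.
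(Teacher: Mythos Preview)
Your proof is correct and follows essentially the same route as the paper: the paper maps $m_1\otimes m_2$ into $\overline M_1\otimes_Q\overline M_2$ where $M_i=\prod_{X_i}(R/I_{3-i})$ and $\overline M_i=M_i/M_iI_i$, which under the identification $R/I_{3-i}\cong R_i$ is exactly your $(\prod_{X_i}R_i)/(\prod_{X_i}R_i)I_i$, and then derives the same finite-generation contradiction. The only cosmetic difference is that you phrase the passage via the split projections $\pi_i$ and prove an actual isomorphism for the target tensor product, whereas the paper is content with the surjection onto $\overline M_1\otimes_Q\overline M_2$; either suffices.
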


\begin{proof}
Notice that $R_1 \otimes_Q R_2 = (Q \oplus I_1) \otimes_Q (Q \oplus I_2) = Q \oplus I_1 \oplus I_2 \oplus (I_1 \otimes_Q I_2)$,
$R = Q \oplus I_1 \oplus I_2$, $R_1$ and $R_2$ are subrings of $R$,
$I_1$ and $I_2$ are ideals of $R$ with product zero,
and $R$ is a commutative augmented $Q$-ring with augmentation ideal
$I \coloneq I_1 \oplus I_2$.

Here, $\mu_{X_1,X_2}$ carries $m_1 \otimes m_2$ to
$(r_{1,j_1}r_{2,j_2} : (j_1 ,j_2 ) \in X_1 \times X_2)$, which is
zero, by our definition of $R$.
Thus, it remains to show that $m_1 \otimes m_2 \ne 0$ in
$(\null^{X_1}\mkern-2mu R) \otimes_R ( R^{X_2})$, and we shall do this
by finding a quotient $R$-module in which the image of $m_1 \otimes m_2 $ is not zero.

Let \mbox{$M_1 \coloneq \null^{X_1}\mkern-1mu (R/I_2)$},
a quotient $R$-module of $\null^{X_1}\mkern-2mu  R$ with trivial $I_2$-action.
Let \mbox{$M_2 \coloneq (R/I_1)^{X_2}$},
a quotient $R$-module of $R^{X_2}$ with trivial $I_1$-action.
For $i = 1,2$, set $\overline M_i \coloneq M_i/(M_iI) = M_i/(M_iI_i)$,
a quotient $R$-module of $\prod\limits_{X_i} R$ with trivial $I$-action,
and let $\overline m_i$ denote the image of $m_i$ in $\overline M_i$.
Then we have a map $$\textstyle(\null^{X_1}\mkern-2mu R) \otimes_R ( R^{X_2}) \to
\overline M_1 \otimes_R \overline M_2 = \overline M_1 \otimes_Q \overline M_2,$$ with
$m_1 \otimes_R m_2$ mapping to
\mbox{$\overline m_1 \otimes_Q \overline m_2$}.
It suffices to show that
\mbox{$\overline m_1 \otimes_Q \overline m_2 \ne 0$}, and,
since $Q$ is a field, it suffices to show that $\overline m_1 \ne 0$ and $\overline m_2 \ne 0$.
By symmetry it suffices to assume that $\overline m_1 = 0$ and obtain a contradiction, as follows.

 The natural bijection $R_1 \to R/I_2$
induces a bijection $\null^{X_1}\mkern-2mu  R_1 \to M_1$, and it carries
$(\null^{X_1}\mkern-2mu  R_1)I_1 $ to $M_1I_1$.
Since   $m_1 \in \null^{X_1}\mkern-2mu R_1$ and  $\overline m_1 = 0 $, we have $m_1 \in (\null^{X_1}\mkern-2mu R_1) I_1$.
Hence,
there is some expression \mbox{$m_1 = \sum_{k = 1}^{\ell} v_kr_k$} with each
$v_k \in \null^{X_1}\mkern-2mu R_1$ and each $r_k \in I_1$.
For each $j_1 \in X_1$, the $j_1$st coordinate of $m_1$ is $r_{1,j_1}$, and, hence, $r_{1,j_1}$ lies in the ideal of
$R_1$ generated by~$\{ r_k : 1 \le k \le \ell \}$, a finite subset of~$I_1$.
It follows that $I_1$ is a finitely generated ideal of $R_1$.
This is a contradiction, as desired.
\end{proof}

The following expands on the last two paragraphs of~\cite{Goodearl72}.

\begin{proposition}\label{prop:vnr}   If
 $R$ is  a commutative, $\aleph_0$-Noeth\-er\-ian,  von Neumann regular ring that is not completely reducible,
then the map
 \mbox{$\alpha''\colon R\series{a} \amalg R \series{b} \to R \series{a,b}$} is not injective.
\end{proposition}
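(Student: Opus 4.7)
The plan is to use Proposition~\ref{prop:suff02}(i), which reduces the non-injectivity of $\alpha''$ to the non-injectivity of some $\beta_n$. My target will be $\beta_2$, which under the canonical bijection $\naturals^2 \simeq \naturals \times \naturals$ coincides with the multiplication map $\mu_{\naturals,\naturals} \colon (\prod_{\naturals} R) \otimes_R (\prod_{\naturals} R) \to \prod_{\naturals \times \naturals} R$ of Definitions~\ref{defs:early}(1).

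First, since $R$ is commutative, $\aleph_0$-Noetherian, von Neumann regular, but not completely reducible, the contrapositive of Theorem~\ref{thm:GoodO}(ii) tells us that $R$ cannot be $\Pi$-coherent. On the other hand, since $R$ is von Neumann regular, every finitely generated ideal is a direct summand of $R$ (see Remarks~\ref{rems:vnr}), so $R$ is semihereditary and, in particular, coherent.

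Next I would apply Corollary~\ref{cor:Herbera} in contrapositive form: since $R$ is coherent but fails to be $\Pi$-coherent, condition (c) of that corollary must fail for every admissible pair of infinite sets $X$, $Y$. Taking $X = Y = \naturals$, which is legal since $R$ is $\aleph_0$-Noetherian on both sides, we conclude that $\mu_{\naturals,\naturals}$ is not injective. Under the identification above, this is precisely the statement that $\beta_2$ is not injective, and Proposition~\ref{prop:suff02}(i) then yields the non-injectivity of $\alpha''$.

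The proof is thus essentially bookkeeping, combining Theorem~\ref{thm:GoodO}(ii), Remarks~\ref{rems:vnr}, Corollary~\ref{cor:Herbera}, and Proposition~\ref{prop:suff02}(i) from Sections~\ref{sec:back} and~\ref{sec:PID}. The only points that need care are tracking the contrapositive directions in Theorem~\ref{thm:GoodO}(ii) and Corollary~\ref{cor:Herbera}, and recognising that $\beta_2$ in the notation of Section~\ref{sec:PID} is literally an instance of the map $\mu_{X,Y}$ introduced in Section~\ref{sec:back}; there is no genuine obstacle beyond this, since all the substantive work has already been done.
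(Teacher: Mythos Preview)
Your proof is correct and follows essentially the same route as the paper's own argument: apply Theorem~\ref{thm:GoodO}\eqref{it:GoodO2} to see that $R$ is not $\Pi$-coherent, then use the contrapositive of Corollary~\ref{cor:Herbera} \eqref{it:HT3}$\Rightarrow$\eqref{it:HT1} (together with coherence of $R$) to get non-injectivity of $\beta_2$, and finish with Proposition~\ref{prop:suff02}(i). Your explicit mention of Remarks~\ref{rems:vnr} to secure coherence is a detail the paper leaves implicit, but otherwise the proofs coincide.
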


\begin{proof}  By  Theorem~\ref{thm:GoodO}\eqref{it:GoodO2}, $R$ is not  $\Pi$-coherent.
By the contrapositive of
\mbox{Corollary~\ref{cor:Herbera} \eqref{it:HT3}$\Rightarrow$\eqref{it:HT1}},
$\beta_2$ is not injective.  By Proposition~\ref{prop:suff02}(i),
\mbox{$\alpha''$} is not injective.
\end{proof}

\begin{example}  If $Q$ is a
field and \mbox{$R = Q[e_j : j \in \naturals]/(e_k e_\ell - \delta_{k,\ell} e_k : k, \ell \in \naturals),$} then
 $\alpha''$ is not injective, since
 $R$ satisfies the hypotheses of Proposition~\ref{prop:vnr}; the
countable case was first seen in the last two paragraphs of~\cite{Goodearl72}.
Let us give an alternative proof   by showing that this $R$
 has the form considered in Proposition~\ref{prop:tensor} with $X_1$ and $X_2$ countable.
Partition $\naturals$ into two infinite subsets $X_1$ and $X_2$.
For $i = 1,2$, let \mbox{$m_i \coloneq \{e_j : j \in X_i\}$},
let $I_i$ denote the ideal of $R$ generated by~$m_i$,
and let $R_i$ denote the $Q$-subring of $R$ generated by $m_i$.
Then we may make the identification \mbox{$R = (R_1 \otimes_Q R_2)/(I_1\otimes_Q I_2)$}, and, by Proposition~\ref{prop:tensor},
\mbox{$m_1 \otimes m_2$}
represents a non-zero element of $\operatorname{Ker}(\mu_{X_1,X_2})$.
We may identify  $\mu_{X_1,X_2}$    with~$\beta_2$.
\end{example}

 Here is a  result similar to Proposition~\ref{prop:tensor}, which uses
a different, rather curious argument to prove that an appropriate element
of $\operatorname{Ker} \mu_{X,X}$ is not zero.

\begin{proposition}\label{prop:gmb2} {\normalfont (Bergman)} Let $Q$ be a non-zero commutative ring,
$X$ an infinite set, $I$ the free $Q$-module on $X$, $R $ the
commutative augmented  $Q$-ring $Q \oplus I$ with $I^2 = \{0\}$, and
$m$ the element $(x:x\in X)$ of $\prod\limits_{X} R$.\vspace{-2mm}
Then $m \otimes m$ represents a non-zero element of $\operatorname{Ker}(\mu_{X,X})$.
\end{proposition}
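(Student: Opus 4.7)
Write $M\coloneq\prod_X R$. The inclusion $m\otimes m\in\operatorname{Ker}(\mu_{X,X})$ is immediate: $\mu_{X,X}$ sends $m\otimes m$ to $(xy)_{(x,y)\in X\times X}$, which vanishes because $x,y\in I$ and $I^2=\{0\}$. The substantive content is the non-vanishing of $m\otimes m$ in $M\otimes_R M$, and for this I plan to construct a concrete $R$-bilinear map $\Phi\colon M\times M\to V$, for a suitable $Q$-module $V$ regarded as an $R$-module with trivial $I$-action, such that $\Phi(m,m)\ne 0$. By the universal property of $\otimes_R$, such a $\Phi$ factors through $M\otimes_R M$ and detects a nonzero class at $m\otimes m$.

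Take $V\coloneq(\prod_X Q)/(\bigoplus_X Q)$; the coordinatewise product of $\prod_X Q$ descends to a well-defined $Q$-bilinear product on $V$, since $\bigoplus_X Q$ is an ideal of $\prod_X Q$. Using the decomposition $R=Q\oplus I$, write each $v\in M$ as $v=(q_x+i_x)_{x\in X}$ with $q_x\in Q$ and $i_x\in I$, and let $\epsilon_x\colon I\to Q$ denote the $Q$-linear functional on the free $Q$-module $I=Q^{(X)}$ dual to the basis element $x$. Define a ``diagonal evaluation'' $\psi\colon M\to V$ by $\psi(v)\coloneq\overline{(\epsilon_x(i_x))_{x\in X}}$, and set $\Phi(v,w)\coloneq\psi(v)\,\psi(w)$.

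The step I expect to be the main obstacle is verifying that $\Phi$ is $R$-balanced; $Q$-bilinearity is immediate from the $Q$-linearity of each $\epsilon_x$. Since $V$ has trivial $I$-action, $R$-balance reduces to showing $\psi(rv)=0$ for all $r\in I$ and $v\in M$. The key point is that $(rv)_x=rq_x+ri_x=rq_x$, because $ri_x\in I^2=\{0\}$, so $\epsilon_x((rv)_x)=q_x\epsilon_x(r)$; and because $r\in Q^{(X)}$ has finite support in $X$, the sequence $(q_x\epsilon_x(r))_{x\in X}$ is finitely supported and hence represents the zero class in $V$. Consequently $\Phi(rv,w)=0=\Phi(v,rw)=r\Phi(v,w)$ for $r\in I$, so $\Phi$ is $R$-bilinear and descends to $M\otimes_R M\to V$. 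To finish, $\psi(m)=\overline{(\epsilon_x(x))_{x\in X}}=\overline{(1)_{x\in X}}$ is nonzero in $V$ because $X$ is infinite and $1\ne 0$ in $Q$; the constant-one sequence is idempotent under coordinatewise multiplication, so $\Phi(m,m)=\psi(m)^2=\overline{(1)_{x\in X}}\ne 0$. The ``curious'' ingredient is that neither the diagonal evaluation $\psi$ nor the coordinatewise product on $V$ is $R$-linear individually; $R$-balance emerges only from the interplay of the square-zero relation $I^2=0$ with the finite-support property of elements of the free $Q$-module $I$.
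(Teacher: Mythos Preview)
Your proof is correct and takes a genuinely different route from the paper's. The paper detects the nonvanishing of $m\otimes_R m$ by mapping $(\null^X\!R)\otimes_R(R^X)$ to the quotient $\overline P\coloneq P/(IP+PI)$ of the large matrix module $P\coloneq{}^X(R\otimes_Q R)^X$, and then argues coordinatewise that the image $p=(x\otimes_Q y)_{(x,y)}$ cannot lie in $IP+PI$ because any element of $IP+PI$ has vanishing $(x\otimes y)$-coefficient at the $(x,y)$-entry for $x,y$ outside some finite set. Your argument instead stays one dimension lower: you build a $Q$-linear diagonal evaluation $\psi\colon M\to V=(\prod_X Q)/(\bigoplus_X Q)$ and exploit the ring structure of $V$ to set $\Phi(v,w)=\psi(v)\psi(w)$. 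The key verification---that $\psi(rv)=0$ for $r\in I$ because $r$ has finite support in the free module $I=Q^{(X)}$---is exactly the same finiteness that drives the paper's argument, but packaged more economically. Your target $V$ is much smaller and the computation $\Phi(m,m)=\overline{(1)}^2=\overline{(1)}\ne 0$ is immediate; the price is that your construction is more ad hoc (the product $\psi(v)\psi(w)$ has no evident functorial origin), whereas the paper's map through $R\otimes_Q R$ is the canonical comparison between $\otimes_R$ and $\otimes_Q$. Both proofs ultimately rest on the same two ingredients: $I^2=0$ kills the cross-terms, and the finite support of elements of $I$ forces eventual vanishing along the diagonal.
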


\begin{proof} It is clear that $m \otimes_R m$ is an element of $\operatorname{Ker}(\mu_{X,X})$, and
it remains to show that $m \otimes_R m \ne 0$ in $ (\null^{X}\mkern-2mu  R) \otimes_R (  R^X)$;
we shall do this
by finding a quotient $R$-module in which the image of $m \otimes_R m $ is not zero.

Define  $R$-bimodules
 $P \coloneq \textstyle   \null^{X}\mkern-2mu  (R \otimes_Q R )^X$ and
 $\overline P \coloneq (R/I) \otimes _R P \otimes_R (R/I)$, and make the identification $\overline P = P/(IP+PI)$.
We have  natural $R$-bimodule maps
\begin{align*}\textstyle(\null^{X}\mkern-2mu  R) \textstyle\otimes_Q (  R^X)
&\to\textstyle  \null^{X}\mkern-2mu  (R \otimes_Q R )^X = P \to \overline P.
\end{align*}
Since  $I$ acts trivially on $\overline P$ on the left and the right, the $R$-bimodule map
\mbox{$\textstyle(\null^{X}\mkern-2mu   R) \textstyle\otimes_Q ( R^X) \to \overline P$} is $R$-centralizing, and,
hence, factors through the universal $R$-centralizing quotient
$\textstyle(\null^{X}\mkern-2mu   R) \textstyle\otimes_R ( R^X)$ of the domain.
Under the resulting map $\textstyle(\null^{X}\mkern-2mu   R) \textstyle\otimes_R ( R^X) \to \overline P$,
our element $m \otimes_R m$ is mapped to $ p + IP +PI $
where $p \coloneq  (x \otimes_Q y : (x,y) \in X \times X) \in P$.  Thus to show that $m \otimes_R m \ne 0$,
 it suffices to show that
$p \not \in IP + PI$.

For every $x,y\in X$ and $t\in R\otimes_Q R$, we shall write
$c_{x\otimes y}(t)\in Q$ for the coefficient of $x\otimes y$ in
the expression for $t$ with respect to the   $Q$-basis  $(X\cup\{1\})\otimes(X\cup\{1\})$   of
$R\otimes_Q R$.

Consider any  \mbox{$q=(q_{x,y}:(x,y)\in X\times X) \in P$}.
For each $x_0 \in X$, it can be seen that \mbox{$x_0 P =  \textstyle   \null^{X}\mkern-2mu  (x_0 Q \otimes_Q  R)^X$}.
Hence, if $q \in x_0 P$, then,  for all \mbox{$(x,y) \in  X \times  X$},
$c_{x\otimes y}(q_{x,y}) = 0$ if $x \ne x_0$.  Similar statements hold for~\mbox{$Px_0$}.
If $q \in IP+PI$, then there exist
 finite subsets $X_0, Y_0\subseteq X$ such that $q \in \sum_{x_0 \in X_0}x_0P + \sum_{y_0 \in Y_0}Py_0$\vspace{.7mm}.
Since $X$ is infinite, there exist    $ x \in  X{-}X_0$ and $y \in X{-}Y_0$,
and then $c_{x\otimes y}(q_{x,y}) = 0$.  Since $Q \ne \{0\}$, $c_{x\otimes y}(q_{x,y}) \ne 1$.
Hence, $q \ne p$.
This shows that  $m \otimes_R m \ne 0$, as desired.
\end{proof}

In the previous two propositions,
we have created elements of $\operatorname{Ker}(\mu_{X_1,X_2})$  of the form $m_1 \otimes m_2$,
and this requires  the existence of zero-divisors in $R$.  By  using elements
of $\operatorname{Ker}(\mu_{X_1,X_2})$  of the form $m_1 \otimes m_2 - m_3 \otimes m_4$,
we can avoid having zero-divisors in $R$, as follows.

\begin{proposition} {\normalfont (Bergman)} Let $Q$ be a field, and let $R$ be the  subring of $Q[x,y]$
generated by  $\{x^i y : i\in\naturals\}$.  In particular, $R$ is a commutative  domain.
In $\prod\limits_{\naturals} R$, let\vspace{-3mm}
$$m_1 \coloneq m_2 \coloneq (x^{2i+1}y : i \in \naturals), \quad m_3 \coloneq (x^{2i}y:i\in \naturals),
\quad  m_4 \coloneq (x^{2i+2}y:i\in \naturals).$$
Then
$m_1 \otimes m_2 - m_3 \otimes m_4$ lies in $\operatorname{Ker}(\beta_2)-\{0\}$.
\end{proposition}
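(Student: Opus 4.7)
The plan is to verify by direct multiplication that $m_1 \otimes m_2 - m_3 \otimes m_4$ lies in $\operatorname{Ker}(\beta_2)$, and then to establish its non-vanishing by passing to the quotient $R' \coloneq R/yR$, in which the augmentation ideal becomes square-zero, so that the technique of Proposition~\ref{prop:gmb2} becomes directly applicable.

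The containment in $\operatorname{Ker}(\beta_2)$ is immediate: in $R$ one has $(x^{2i+1}y)(x^{2j+1}y) = x^{2i+2j+2}y^2 = (x^{2i}y)(x^{2j+2}y)$ for all $i, j \in \naturals$, so the corresponding coordinates of $\beta_2(m_1 \otimes m_2)$ and $\beta_2(m_3 \otimes m_4)$ coincide.

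For the non-vanishing, the key preliminary observation is that $yR$ is the $Q$-span of $\{y\} \cup \{x^a y^b : a \ge 0,\, b \ge 2\}$, so $R' = Q \oplus I'$ with augmentation ideal $I'$ having $Q$-basis $X \coloneq \{\overline{x^a y} : a \ge 1\}$ (note $\overline{x^0 y} = \overline{y} = 0$). Since $\overline{x^a y}\cdot\overline{x^c y} = \overline{x^{a+c} y^2} = 0$ for all $a, c \ge 1$, we have $(I')^2 = 0$, placing $R'$ in the setting of Proposition~\ref{prop:gmb2}. Componentwise reduction modulo $yR$ yields a natural map $(\prod_{\naturals} R) \otimes_R (\prod_{\naturals} R) \to (\prod_{\naturals} R') \otimes_{R'} (\prod_{\naturals} R')$, so it suffices to show that the image of $m_1 \otimes m_2 - m_3 \otimes m_4$ in the target is nonzero.

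To finish, I would run the argument of Proposition~\ref{prop:gmb2} with $R$ replaced by $R'$. Set $P \coloneq (R' \otimes_Q R')^{\naturals \times \naturals}$, an $R'$-bimodule via entry-wise multiplication, and $\overline P \coloneq P/(I'P + PI')$. Because $(I')^2 = 0$, the $Q$-bilinear map $(m, n) \mapsto (m_i \otimes_Q n_j)$ factors through an $R'$-balanced map $(\prod_{\naturals} R') \otimes_{R'} (\prod_{\naturals} R') \to \overline P$. The image of $\overline{m_1} \otimes \overline{m_2} - \overline{m_3} \otimes \overline{m_4}$ in $P$ has $(i,j)$-entry $\overline{x^{2i+1}y} \otimes \overline{x^{2j+1}y} - \overline{x^{2i}y} \otimes \overline{x^{2j+2}y}$, the second summand being $0$ when $i = 0$; in the $Q$-basis $(\{1\}\cup X)\otimes(\{1\}\cup X)$ of $R' \otimes_Q R'$, the coefficient of $\overline{x^{2i+1}y} \otimes \overline{x^{2j+1}y}$ at entry $(i,j)$ equals $1$ for every $(i, j) \in \naturals \times \naturals$. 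The main technical step, carried out exactly as in Proposition~\ref{prop:gmb2} and crucially using $(I')^2 = 0$, is to observe that for any decomposition of an element as $q_1 + q_2 \in I'P + PI'$ there exist finite $X_0, Y_0 \subseteq X$ such that each entry of $q_1$ lies in the $Q$-span of $\{x_0 \otimes t : x_0 \in X_0,\, t \in R'\}$ and each entry of $q_2$ lies in the $Q$-span of $\{t \otimes y_0 : t \in R',\, y_0 \in Y_0\}$. Choosing $i, j \in \naturals$ with $2i+1$ and $2j+1$ larger than every index appearing in $X_0$ and $Y_0$ respectively forces the coefficient of $\overline{x^{2i+1}y} \otimes \overline{x^{2j+1}y}$ at entry $(i,j)$ in $q_1 + q_2$ to vanish, contradicting its actual value of $1$. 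Hence the image in $\overline P$ is nonzero, so $m_1 \otimes m_2 - m_3 \otimes m_4$ is nonzero in $(\prod_{\naturals} R) \otimes_R (\prod_{\naturals} R)$.
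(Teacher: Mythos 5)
Your proof is correct, and your verification that the element lies in $\operatorname{Ker}(\beta_2)$ is the same as the paper's; for the non-vanishing you take a somewhat different route. The paper quotients by the larger ideal generated by $\{x^{2i}y,\,x^{2i}y^{2} : i\in\naturals\}$: modulo that ideal $m_3$ and $m_4$ map to $0$ and $m_1=m_2$ maps to $m=(\overline{x^{2i+1}y} : i\in\naturals)$, so the image of $m_1\otimes m_2-m_3\otimes m_4$ is exactly $m\otimes m$ and Proposition~\ref{prop:gmb2} applies verbatim as a black box, with $X$ the set of images of the $x^{2i+1}y$. You instead quotient only by $yR$; your identification of $R'=R/yR$ as $Q\oplus I'$ with $(I')^{2}=0$ and $I'$ free on $\{\overline{x^{a}y}:a\ge 1\}$ is correct, but since $\overline{m_3}$ and $\overline{m_4}$ are then nonzero you cannot invoke Proposition~\ref{prop:gmb2} as stated and must rerun its coefficient-and-support argument for the element $\overline{m_1}\otimes\overline{m_2}-\overline{m_3}\otimes\overline{m_4}$. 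Your rerun is sound: the $(i,j)$ entry of the image in $P$ has coefficient $1$ on the basis tensor $\overline{x^{2i+1}y}\otimes\overline{x^{2j+1}y}$ (the subtracted term is a distinct basis tensor, or zero when $i=0$), the factorization through $\otimes_{R'}$ and the finite-support description of $I'P+PI'$ both use $(I')^{2}=0$ exactly as in Proposition~\ref{prop:gmb2}, and choosing $i,j$ with $2i+1$, $2j+1$ beyond the finitely many exponents occurring in $X_0$, $Y_0$ yields the contradiction. The paper's choice of ideal buys brevity, reducing the proposition to a citation; your choice of the more obvious ideal $yR$ costs a repetition of the square-zero argument but is equally complete and makes the mechanism explicit.
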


\begin{proof} For $i, j \in \naturals$, both $m_1 \otimes m_2$ and   $m_3 \otimes m_4$
have  $x^{2i+2j+2}y^2$ as their $(i,j)$th component.
Thus $m_1 \otimes m_2 - m_3 \otimes m_4\in \operatorname{Ker}(\beta_2)$.

Let $I$ denote the ideal of $R$ generated by $\{x^{2i}y, x^{2i} y^{2} : i  \in \naturals\} $,
and let $R' \coloneq R/I$.  Then $R'$ is the commutative augmented $Q$-ring with augmentation ideal of
square zero and $Q$-basis  $X \coloneq \{x^{2i+1} y : i \in\naturals\}$.
In $\prod\limits_{\naturals} R'$, let $m \coloneq (x^{2i+1}y : i \in \naturals)$.
By Proposition~\ref{prop:gmb2}, in  $(\prod\limits_{\naturals} R')^{\otimes 2}$, $m \otimes m \ne 0$.
The natural map $(\prod\limits_{\naturals} R)^{\otimes 2} \to (\prod\limits_{\naturals} R')^{\otimes 2}$
carries $m_1 \otimes m_2 - m_3 \otimes m_4$ to $(m \otimes m) - (0\otimes 0) \ne 0$.
Hence, $m_1 \otimes m_2 - m_3 \otimes m_4 \ne 0$.
\end{proof}

\bigskip

\noindent{\textbf{\Large{Acknowledgments}}}

\medskip
\footnotesize The first-named author was partially supported by
DGI MICIIN MTM2011-28992-C02-01, and by the Comissionat
per Universitats i Recerca de la Generalitat de Catalunya.

The second-named author was
partially supported by
Spain's Ministerio de Ciencia e Innovaci\'on
through Project MTM2011-25955.

We are very grateful to George Bergman for generously contributing the examples of Section~\ref{sec:ex} and
suggesting many improvements of  various earlier versions.

We are also grateful to Dolors Herbera  for providing much useful information for Section~\ref{sec:back}.

\bibliographystyle{amsplain}

\noindent\textsc{Departament de Matem\`atiques,
Universitat Aut\`onoma de Barcelona,
E-08193 Bellaterra (Barcelona), Spain}

\medskip

\noindent \emph{E-mail addresses:} \url{para@mat.uab.cat},\,\,  \url{dicks@mat.uab.cat}.

\noindent \emph{Home page:} \url{http://mat.uab.cat/~dicks/}

\end{document}